\newcommand{\ddbar}{\sqrt{-1}\partial\overline{\partial}}
\renewcommand{\d}{\partial}
\renewcommand{\phi}{\varphi}
\newtheorem{thm}{Theorem}
\newtheorem{prop}[thm]{Proposition}
\newtheorem{lemma}[thm]{Lemma}
\newtheorem{conj}[thm]{Conjecture}
\theoremstyle{definition}
\newtheorem{defn}[thm]{Definition}
\theoremstyle{remark}
\newtheorem*{remark}{Remark}
\title{Convergence of the $J$-flow on toric manifolds}
\author{Tristan C. Collins}
\address{Department of Mathematics, Harvard University, One Oxford St., Cambridge, MA}
\email{tcollins@math.harvard.edu}
\author{G\'abor Sz\'ekelyhidi}
\address{Department of Mathematics, University of Notre Dame, 277 Hurley, South Bend, IN}
\email{gszekely@nd.edu}
\begin{document}

\begin{abstract}
  We show that on a K\"ahler manifold
  whether the $J$-flow converges or not is independent of the
  chosen background metric in its K\"ahler class. On toric
  manifolds we give a numerical characterization of when the $J$-flow
  converges, verifying a conjecture in \cite{LSz13} in this
  case. We also strengthen existing results on more general inverse
  $\sigma_k$ equations on K\"ahler manifolds. 
\end{abstract}

\maketitle

\section{Introduction}
Let $(M,\alpha)$ be a compact K\"ahler manifold of dimension $n$, and suppose that
$\Omega$ is a K\"ahler class on $M$, unrelated to $\alpha$. The $J$-flow, introduced by
Donaldson~\cite{Don99} and Chen~\cite{Chen00, Chen04} is the parabolic equation
\[ \frac{\d }{\d t}\omega_t = -\ddbar \Lambda_{\omega_t}\alpha, \]
with initial condition $\omega_0\in\Omega$. It was shown by
Song-Weinkove~\cite{SW04} that this flow converges whenever there exists
a metric $\omega\in \Omega$ satisfying $\Lambda_\omega\alpha = c$,
where the constant $c$ only depends on the classes $[\alpha], \Omega$,
and is determined by
\begin{equation} \label{eq:cdefn}
 \int_M c\omega^n - n\omega^{n-1}\wedge\alpha = 0. 
\end{equation}

In addition, Song-Weinkove~\cite{SW04} showed that such an $\omega$
exists if and only if there is a metric $\chi\in \Omega$ such that 
\begin{equation}\label{eq:BWcond}
  c\chi^{n-1} - (n-1)\chi^{n-2}\wedge\alpha > 0,
\end{equation}
in the sense of positivity of $(n-1,n-1)$-forms. Unfortunately, in
practice it seems to be almost as difficult to produce a metric $\chi$
with this positivity property, as solving the equation
$\Lambda_\omega\alpha = c$, except in the case when $n=2$, when the
condition reduces to the class $c\Omega - [\alpha]$ being K\"ahler. In
particular when $n > 2$, it was unknown whether the existence
of a solution to the equation depends only on the classes $\Omega,
[\alpha]$, or if it depends on the choice of $\alpha$ in its
class. Our first main result settles this problem by showing
that solvability depends only on the class $[\alpha]$. 

\begin{thm}\label{thm:2}
Suppose that there is a metric $\omega\in \Omega$ such that
$\Lambda_\omega\alpha = c$, and $\beta\in [\alpha]$ is another
K\"ahler metric. Then there exists an $\omega'\in \Omega$ such that
$\Lambda_{\omega'}\beta = c$. 
\end{thm}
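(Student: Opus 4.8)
The plan is to connect the two metrics $\alpha$ and $\beta$ by a path and use a continuity argument. Since $\alpha,\beta$ lie in the same Kähler class, write $\beta = \alpha + \ddbar f$ for a smooth function $f$, and set $\alpha_t = \alpha + t\ddbar f$ for $t\in[0,1]$; each $\alpha_t$ is Kähler for $t$ in a neighborhood of $[0,1]$, and $\alpha_0 = \alpha$, $\alpha_1 = \beta$. Note the constant $c$ in \eqref{eq:cdefn} is unchanged along this path since it depends only on $[\alpha]$ and $\Omega$. Consider the set $S\subseteq[0,1]$ of those $t$ for which there exists $\omega_t\in\Omega$ with $\Lambda_{\omega_t}\alpha_t = c$. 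By hypothesis $0\in S$, so it suffices to show $S$ is open and closed in $[0,1]$.

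**Openness.** Openness should follow from the implicit function theorem applied to the map sending a potential $\phi$ (with $\omega_\phi = \omega_0 + \ddbar\phi > 0$) and parameter $t$ to $\Lambda_{\omega_\phi}\alpha_t - c$, viewed between suitable Hölder spaces. The linearization at a solution is, up to a positive factor, a second-order elliptic operator of the form $\phi \mapsto \omega_\phi^{ij}\alpha_{t,j\bar k}\,\partial_i\partial_{\bar k}\phi$ plus lower order terms; after subtracting the kernel of constants (and noting the cokernel is also spanned by constants, via the divergence structure one gets from integrating against $\omega_\phi^{n-1}$) this operator is invertible on the orthogonal complement, so a solution persists for nearby $t$.

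**Closedness — the main obstacle.** Closedness is where the real work lies: one must establish a priori $C^\infty$ estimates for solutions $\omega_t\in\Omega$ of $\Lambda_{\omega_t}\alpha_t = c$ that are uniform in $t$. The strategy is to use the equivalence \eqref{eq:BWcond}: existence of a solution for the pair $(\alpha_t,\Omega)$ is equivalent to existence of $\chi_t\in\Omega$ with $c\chi_t^{n-1} - (n-1)\chi_t^{n-2}\wedge\alpha_t > 0$. Since this cone condition is open in $t$ and, crucially, only involves $\alpha_t$ through its class together with a fixed representative, one should be able to produce such a $\chi_t$ for $t$ slightly beyond any point of $\overline S$: indeed if $t_0\in\overline S$ then a solution $\omega_{t_0}$ exists, hence a cone-positive $\chi_{t_0}$ exists, and a small perturbation of $\chi_{t_0}$ remains cone-positive for $\alpha_t$ with $t$ near $t_0$. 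Then the Song–Weinkove theorem (convergence of the $J$-flow, or equivalently solvability, under the cone condition \eqref{eq:BWcond}) yields a solution for those $t$, so $\overline S$ is open, and combined with openness of $S$ this forces $S = [0,1]$. The delicate point is verifying that the cone-positivity really does persist under the joint perturbation of both $\chi$ and $\alpha$ in a neighborhood; this is a pointwise linear-algebra statement about positivity of $(n-1,n-1)$-forms and should be robust, but it is the crux of the argument and must be checked carefully. In particular $1\in S$, giving the desired $\omega' = \omega_1\in\Omega$ with $\Lambda_{\omega'}\beta = c$.
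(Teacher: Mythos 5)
There is a genuine gap in the closedness step, and it is precisely the step that contains all the difficulty of the theorem. You write that ``if $t_0\in\overline{S}$ then a solution $\omega_{t_0}$ exists'' --- but this is exactly what closedness asserts, and you have not proved it. Having $t_k\to t_0$ with $t_k\in S$ gives solutions $\omega_{t_k}$ and cone-positive forms $\chi_{t_k}$, but nothing controls these uniformly in $k$: the $\chi_{t_k}$ may lose positivity or blow up as $t_k\to t_0$. Your remark that the cone condition ``only involves $\alpha_t$ through its class'' is the misconception driving the argument: the positivity \eqref{eq:BWcond} is a pointwise inequality of $(n-1,n-1)$-forms that depends on the specific representative $\alpha_t$, not merely its cohomology class. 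If it depended only on the class, the theorem would be a tautology, since $[\alpha_t]=[\alpha]$ is constant along your path. The ``pointwise linear-algebra'' robustness you invoke gives openness (which you already have from the implicit function theorem), not closedness; before this paper it was an open problem precisely because no one knew how to obtain the uniform a priori control needed to close such a continuity argument.

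The paper's proof avoids the continuity path in $\alpha$ entirely. The observation that does the work (Proposition~\ref{prop:Jproper}) is that the relevant invariant is \emph{properness} of the $J$-functional $\mathcal{J}_\alpha$, and that $\mathcal{J}_\beta-\mathcal{J}_\alpha$ is uniformly bounded when $\beta\in[\alpha]$ --- this is the genuinely cohomological statement your argument was reaching for, but it lives at the level of functionals, not of the pointwise cone condition. Solvability for $\alpha$ implies $\mathcal{J}_\alpha$ is proper (Song--Weinkove), hence $\mathcal{J}_\beta$ is proper. The hard new ingredient is Proposition~\ref{prop:properconverge}: properness implies solvability. This requires running the perturbed flow for $F_\epsilon(A)=S_1(A^{-1})-\epsilon S_n(A^{-1})$, extracting a weak limit with zero Lelong numbers, applying the B\l{}ocki--Ko\l{}odziej regularization to manufacture a viscosity subsolution in the sense of Definition~\ref{defn:viscosity}, and then invoking the $C^2$-estimate of Proposition~\ref{prop:BenC2}. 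None of this machinery appears in your proposal, and a straight openness/closedness argument in the parameter $t$ does not produce it.
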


This result still leaves open the problem of finding
 effective necessary and sufficient conditions for solvability of
the equation, or equivalently, for convergence of the $J$-flow. In this
direction, the second author and Lejmi~\cite{LSz13} proposed the following conjecture
for when the equation can be solved.

\begin{conj}\label{conj:LSz}
  There exists an $\omega\in \Omega$ satisfying $\Lambda_\omega\alpha
  = c$, with $c$ defined by Equation~\eqref{eq:cdefn}, if and only if 
  for all subvarieties $V\subset M$ with $p = \dim V < n$ we have
  \[ \int_V c\chi^p - p\chi^{p-1}\wedge \alpha > 0, \]
  for $\chi\in \Omega$. 
\end{conj}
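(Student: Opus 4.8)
\emph{Necessity.} This direction holds on any K\"ahler manifold and is elementary. Suppose $\omega\in\Omega$ satisfies $\Lambda_\omega\alpha=c$. Since $\int_V c\chi^p-p\chi^{p-1}\wedge\alpha$ depends only on the classes $\Omega,[\alpha]$ (with $c$ determined by them), we may take $\chi=\omega$. At a point $x$ of the smooth locus of $V$, choose an $\omega$-orthonormal frame $e_1,\dots,e_n$ of $T^{1,0}_xM$ with $e_1,\dots,e_p$ spanning $T^{1,0}_xV$. A direct computation gives
\[
  \big(c\,\omega^p-p\,\omega^{p-1}\wedge\alpha\big)\big|_V=\Big(c-\sum_{i=1}^p\alpha(e_i,\bar e_i)\Big)\,\omega^p\big|_V=\Big(\sum_{i=p+1}^n\alpha(e_i,\bar e_i)\Big)\,\omega^p\big|_V ,
\]
using $\sum_{i=1}^n\alpha(e_i,\bar e_i)=\Lambda_\omega\alpha=c$. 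As $\alpha>0$ and $p<n$, this is a strictly positive multiple of the volume form, and integrating over the smooth locus of $V$ yields the strict inequality.

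\emph{Sufficiency in the toric case.} The content of the conjecture is the converse, which I would establish only for toric $M$, as in the abstract. By Theorem~\ref{thm:2}, solvability of $\Lambda_\omega\alpha=c$ depends only on $[\alpha]$, so I may assume $\alpha$ is $T^n$-invariant and fix a $T^n$-invariant $\chi\in\Omega$. On the open dense orbit, in logarithmic coordinates $x\in\mathbb R^n$, write $\chi=\ddbar f$ and $\alpha=\ddbar g$ with $f,g$ smooth and convex, $\nabla f$ and $\nabla g$ diffeomorphisms onto the interiors of the moment polytopes $P$ of $\Omega$ and $Q$ of $[\alpha]$ (which share a common normal fan). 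A $T^n$-invariant $\omega=\chi+\ddbar\phi\in\Omega$ corresponds to the convex function $u=f+\phi$ with $\nabla u(\mathbb R^n)=\operatorname{int}P$, and $\Lambda_\omega\alpha=c$ becomes the fully nonlinear elliptic equation
\[
  \operatorname{tr}\!\big((\nabla^2u)^{-1}\nabla^2 g\big)=c .
\]
I would solve this by a continuity method (equivalently, by running the $J$-flow): openness is the implicit function theorem, the linearization being elliptic and invertible modulo constants, and closedness reduces, via the a priori theory of Song--Weinkove and its refinements for inverse $\sigma_k$ equations (the higher-order bounds being controlled by the concavity of the operator together with convexity of $u$), to a uniform $C^0$ bound on $\phi$ — equivalently, to uniform control of the convex functions $u$ along the path up to the unavoidable normalizing constants. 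By convexity such a bound can fail only if $u$ degenerates to a convex function affine along some rational subspace, i.e.\ if mass escapes to a proper face $F$ of $P$; faces of $P$ are in bijection with the $T^n$-invariant subvarieties $V_F\subset M$.

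The step I expect to be the main obstacle is extracting from the numerical hypothesis enough to rule out this degeneration, i.e.\ matching the boundary behaviour of the convex potentials near $\partial P$ with the cohomological data. One route is direct: show that a failure of the $C^0$ estimate forces $u$ to concentrate on some face $F$, compute the leading-order rate of the natural energy functional along the degeneration, and identify it with the pairing $\int_{V_F}c\chi^p-p\chi^{p-1}\wedge\alpha$ for the associated toric subvariety $V_F$ — explicitly, a combination of the volume of $F$ and mixed volumes of $F$ with the corresponding face of $Q$ — thereby contradicting the hypothesis (after noting it suffices to test the condition on the finitely many toric subvarieties $V_F$). A cleaner route avoids divergent sequences entirely: prove by pure convex geometry of $P$ and $Q$ that the finitely many face inequalities are equivalent to the existence of a $T^n$-invariant metric satisfying the Song--Weinkove cone condition~\eqref{eq:BWcond}, reducing solvability to a finite-dimensional feasibility problem about volumes and mixed volumes, and then invoke~\cite{SW04}. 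In either approach the delicate point is the same: correctly relating the asymptotics of the degenerate convex potentials near the faces of $P$ to the volume and mixed-volume quantities.
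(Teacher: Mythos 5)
Your necessity computation is correct (and standard), so the issue is sufficiency, where what you have written is an outline that identifies the right reductions but stops short exactly at the crux.

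Two of your reductions match the paper: torus-invariance of $\alpha$ via Theorem~\ref{thm:2} (together with Theorem~\ref{thm:indep}), and passage to convex potentials on the open orbit. After that your proposal diverges. You reduce closedness in the continuity method to a $C^0$ bound on $\phi$, invoking ``the a priori theory of Song--Weinkove and its refinements.'' That theory only produces a $C^2$ estimate in the presence of a subsolution $\chi$ satisfying the cone condition~\eqref{eq:BWcond}; without one, there is no $C^0\Rightarrow C^2$ bootstrap, and obtaining such a $\chi$ from the numerical data is essentially the content of the conjecture. Your ``cleaner route'' (prove by convex geometry that the face inequalities already yield a global subsolution, then cite~\cite{SW04}) would indeed settle the matter, but you do not give, or outline, a proof of that equivalence — and it is not what the paper does. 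Your ``direct route'' (relate a failing $C^0$ bound to energy concentration on a face $F$ and identify the leading rate with $\int_{V_F} c\chi^p - p\chi^{p-1}\wedge\alpha$) is also unproven, and you flag it yourself as the main obstacle; moreover, because the $C^2$ estimate is not under control, it is not even clear that the degeneration picture you describe is the only failure mode.

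The paper avoids constructing a global subsolution altogether. Three ideas drive the argument and are absent from your proposal. First, the equation is deliberately twisted to $\Lambda_\omega\alpha + d\,\alpha^n/\omega^n = c$ (Equation~\eqref{eq:twisted intro}); the extra Monge--Amp\`ere term is what makes the continuity method start from Yau's theorem at $d=\infty$ and, crucially, what makes the induction on dimension close, since the inductive hypothesis on the toric divisors $D_i$ gives solutions of the twisted equation on $D_i$, not of the untwisted one. Second, those divisor solutions are used to build a subsolution only in a neighborhood of $D=\cup D_i$, via a regularized maximum and the viscosity-subsolution framework (Definition~\ref{defn:viscosity}, Lemma~\ref{lem:maxviscosity}, Proposition~\ref{prop:BenC2}); this replaces the global cone condition by a localized one near $D$. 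Third, on compact subsets of $M\setminus D$ the $C^2$ bound is proved directly from torus-invariance: after a Legendre transform the equation becomes $\Delta h + d\det(D^2h)=1$, for which $\det(D^2h)^{1/n}$ is shown to be a supersolution of the linearized operator (Proposition~\ref{prop:C2alpha const}), feeding into a Caffarelli--Yuan-type argument, with the Bian--Guan constant rank theorem ruling out degenerate Hessians (Proposition~\ref{prop:interior}). None of these ingredients appear in your write-up, and the step you yourself label ``the main obstacle'' is precisely what they replace. As it stands, your proof has a genuine gap at the $C^2$ estimate.
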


It is natural to think of metrics $\chi$ satisfying the positivity condition \eqref{eq:BWcond}
as subsolutions for the equation $\Lambda_\omega\alpha=c$.
The result of Song-Weinkove~\cite{SW04} then
says that we can solve the equation whenever a subsolution exists,
whereas Conjecture~\ref{conj:LSz} provides a numerical 
criterion for the existence of a subsolution. In this sense it
is somewhat analogous to the result of Demailly-Paun~\cite{DP04} characterizing
the K\"ahler cone. At the same time as shown in \cite{LSz13}, the
conjecture is related to a circle of ideas in K\"ahler geometry,
relating the existence of special K\"ahler metrics to
algebro-geometric stability conditions, such as the Yau-Tian-Donaldson
conjecture~\cite{Yau93, Tian97, Don02} on the existence of constant
scalar curvature K\"ahler metrics. 

Our next result is that the conjecture holds for toric manifolds (see
Yao~\cite{Yao14} for prior results on toric manifolds). In
fact we prove the following more general result. 

\begin{thm}\label{thm:3} Let $M$ be a compact toric manifold of
  dimension $n$, with two K\"ahler metrics $\alpha, \chi$.
  Suppose that the constant $c > 0$ satisfies
  \begin{equation}\label{eq:Mint}
    \int_M c\chi^n - n\chi^{n-1} \wedge \alpha \geq 0,
  \end{equation}
  and for all toric subvarieties $V\subset M$ of dimension $p\leq n-1$
  we have
  \[ \int_V c\chi^p - p\chi^{p-1}\wedge\alpha > 0. \]
  Then there is a metric $\omega\in[\chi]$ such that
  \begin{equation}\label{eq:twisted intro}
    \Lambda_\omega\alpha + d\frac{\alpha^n}{\omega^n} = c,
  \end{equation}
  for a suitable constant $d\geq 0$. In particular if in \eqref{eq:Mint}
  we have equality, then necessarily $\Lambda_\omega\alpha = c$, and
  so  Conjecture~\ref{conj:LSz} holds for toric manifolds $M$.
\end{thm}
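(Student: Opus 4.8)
The approach is to convert the PDE into a real Monge–Ampère–type problem on the moment polytope, using the standard toric dictionary. Writing $\chi$ in terms of a convex function $u$ on $\mathbb{R}^n$ (the symplectic potential picture is cleanest), the equation \eqref{eq:twisted intro} becomes a fully nonlinear PDE for $u$ whose coefficients involve the polytope data of $\alpha$ and $\chi$. The goal is to produce an $S^1$-invariant solution $\omega\in[\chi]$ by a continuity/a priori estimate argument, where the continuity parameter turns the constant $d$ on and off: at $d$ large the equation is easy to solve (it is dominated by the Monge–Ampère term and the relevant positivity is automatic), and we decrease $d$ as far as the estimates allow. The key point is that the Song–Weinkove subsolution criterion \eqref{eq:BWcond} is \emph{not} assumed here; instead, the twisting term $d\,\alpha^n/\omega^n$ with $d>0$ plays the role of a built-in subsolution, which is why we can always solve \eqref{eq:twisted intro} for \emph{some} $d\geq 0$, and only get $d=0$ (hence the original $J$-equation) when the integral \eqref{eq:Mint} is an equality.

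The heart of the argument is the a priori estimates, and here the toric hypothesis is essential. The $C^0$ estimate should follow from a maximum-principle or integral argument once one knows the solution's Legendre transform stays inside a controlled region; the second-order estimate for these inverse-$\sigma_k$-type equations is by now fairly standard given a subsolution, and the twisting term supplies one uniformly as long as $d$ is bounded below. The delicate step — and the one I expect to be the main obstacle — is ruling out that $d$ is forced to stay bounded away from $0$, i.e. showing that the subvariety positivity hypotheses $\int_V c\chi^p - p\chi^{p-1}\wedge\alpha>0$ over all toric subvarieties $V$ (which correspond to the faces of the polytope) are exactly what is needed to push $d\to 0$. Concretely, if the estimates degenerate as $d\to 0$, one expects a blow-up limit to be modeled on a lower-dimensional toric variety — a facet of the polytope — and the positivity on that facet, which is precisely one of the hypothesized subvariety inequalities, must contradict the degeneration. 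Making this "facet localization" rigorous, i.e. extracting the right limiting object from a sequence of convex solutions on the polytope and identifying its obstruction with a single face integral, is the technical core.

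Once the estimates are in place uniformly for $d$ in a half-open interval $(0,d_0]$ with a limit as $d\to 0$, the continuity method closes up and yields a solution of \eqref{eq:twisted intro} for some $d\geq 0$. Integrating \eqref{eq:twisted intro} against $\omega^n$ gives $\int_M c\omega^n - n\omega^{n-1}\wedge\alpha = d\int_M \alpha^n$; since $[\omega]=[\chi]$, the left side equals the quantity in \eqref{eq:Mint}, so equality there forces $d\int_M\alpha^n=0$ and hence $d=0$, giving $\Lambda_\omega\alpha=c$. Finally, to deduce Conjecture 1.3 for toric $M$: the "only if" direction is the easy integration-by-parts / positivity-of-the-linearization argument already available, and the "if" direction is exactly the $d=0$ conclusion just obtained, applied with equality in \eqref{eq:Mint} holding by the definition \eqref{eq:cdefn} of $c$. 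One should also remark that by Theorem 1.2 the conclusion is independent of the choice of representative $\alpha$ in its class, so the numerical criterion — which only sees $[\alpha]$ and $\Omega$ — is consistent.
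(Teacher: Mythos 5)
Your high-level sketch gets several things right: the theorem is proved by a continuity method in the parameter (the paper fixes $[\chi]$ and runs continuity in $c$, with $d = d(c)$ determined by integration, which is essentially equivalent to dialing $d$ down); the final step integrating \eqref{eq:twisted intro} to relate equality in \eqref{eq:Mint} with $d=0$ is exactly as you say; and you correctly anticipate that the subvariety conditions must enter through some "lower-dimensional toric" localization as estimates threaten to degenerate. But the mechanism you propose for closing the estimates is incorrect in one place and missing the two main technical inputs.

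First, the claim that "the twisting term $d\,\alpha^n/\omega^n$ with $d>0$ plays the role of a built-in subsolution" is false. As the paper notes when proving Theorem~\ref{thm:1visc}, the function $\widetilde F$ that defines the subsolution condition is $\widetilde F_d = \widetilde F$ \emph{independent of} $d$: sending one eigenvalue to infinity kills the $S_n(A^{-1})$ term, so the $d$-twist contributes nothing to the positivity one needs. The twist is only useful at the "far end" of the continuity path (large $c$, where Yau's theorem solves the pure Monge--Amp\`ere equation) and at the very end to conclude $d=0$ from integral equality. It does not supply a subsolution along the path. The actual subsolution is built \emph{by induction on dimension}: the inductive hypothesis solves the twisted equation on each toric divisor $D_i$ (the face conditions on $D_i$ and its sub-faces are exactly the needed hypotheses), and those solutions are extended to collar neighborhoods and glued by a maximum construction involving a $\log(\text{distance})$ correction, producing a K\"ahler current $\chi$ that is a \emph{viscosity} subsolution, $\widetilde F(\alpha^{i\bar p}\chi_{j\bar p})\le c-\kappa$, on a neighborhood $U$ of the union $D$ of toric divisors. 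Proposition~\ref{prop:BenC2} (the $C^2$ estimate for manifolds with boundary) then controls $\Lambda_\alpha\omega$ on $U$ in terms of its value on $\partial U$. Your "blow-up onto a facet" intuition is in the right spirit, but the argument is not a compactness/blow-up argument; it is a direct barrier construction, and the face integrals are used precisely to invoke the inductive hypothesis on each $D_i$.

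Second, and more seriously, the interior estimate on compact sets $K\subset M\setminus D$ (Proposition~\ref{prop:interior}) is not "fairly standard given a subsolution." Away from $D$, on the open torus, the equation reduces to $\mathrm{Tr}((D^2u)^{-1}) + d/\det(D^2u) = 1$ (with variable coefficients) for convex $u$. This operator is neither concave nor convex in $D^2u$, so Evans--Krylov does not apply, and its level sets are not uniformly convex as $d\to 0$, so Caffarelli--Yuan does not apply directly either. The paper proves a new interior $C^{2,\alpha}$ estimate for the Legendre-transformed equation $\Delta h + d\det(D^2 h)=1$ by showing that $\det(D^2h)^{1/n}$ is a \emph{supersolution} of the linearized equation and adapting the Harnack iteration of Caffarelli--Yuan, then uses a Liouville theorem and a blow-up argument to pass to variable coefficients, and uses Bian--Guan's constant rank theorem to rule out degenerate Hessians of the Legendre transform. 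None of this is hinted at in your proposal, and it is the genuinely new PDE content. So while your outline correctly locates where the face conditions and the integral equality enter, it would not close: the key lemmas (viscosity subsolution near $D$ from the inductive hypothesis, and the non-concave/non-convex interior $C^{2,\alpha}$ estimate) are missing.
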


The advantage of this more general result is that the hypotheses are
amenable to an inductive argument. The result for $(n-1)$-dimensional
manifolds can be used to construct a suitable barrier function near
the union $D\subset M$ of the toric divisors, that
allows us to reduce the problem to obtaining a priori estimates on
compact subsets of $M\setminus D$. Theorem~\ref{thm:2} allows us to
work with torus invariant data, which on $M\setminus D$ means that the
equation reduces to an equation for convex functions on
$\mathbf{R}^n$. Equation~\ref{eq:twisted intro} is of the form 
\begin{equation}
{\rm Tr}\left((D^{2}u)^{-1}\right) + \frac{d}{\det(D^{2}u)} =1
\end{equation}
where $u:\mathbf{R}^n\to\mathbf{R}$ is convex, although we have to
deal with variable coefficients as well.  The main estimate is
an upper bound for $D^2u$ on compact sets, which we obtain in Proposition~\ref{prop:interior}.  
Using the Legendre transform and the constant rank theorem of
Bian-Guan~\cite{BG09}, this can be reduced to obtaining a priori $C^{2,\alpha}$
estimates for convex solutions of the equation
\begin{equation}\label{eq: intro model}
\Delta h + d\det(D^2 h) =1.
\end{equation}

The difficulty
is that the operator $M \mapsto Tr(M) + d \det(M)$ is neither concave,
nor convex for $d>0$,
and so the standard Evans-Krylov theory does not apply.
In addition the level sets  $\{ M: \det(M) =1-t\}$ are not uniformly convex as $t\rightarrow 1$, and so
the results of Caffarelli-Yuan~\cite{CY00} also do not apply directly.
Instead we obtain the interior $C^{2,\alpha}$
 estimates by showing that
 $\det(D^2h)^{1/n}$ is a supersolution for the linearized equation, to
 which the techniques of \cite{CY00} can be applied.  This result may be of
 independent interest.

Many of our techniques apply to more general equations than
Equation~\eqref{eq:twisted intro}, of the form
$F(A) = c$, where $A$ is the matrix $A^i_j=\alpha^{i\bar
  k}\omega_{j\bar k}$ and $F(A)$ is a symmetric function of the
eigenvalues of $A$ satisfying certain structural conditions (see
Section~\ref{sec:C2} for details). In particular we can consider
general inverse $\sigma_k$ equations of the form
\begin{equation}\label{eq:1} \sum_{k=1}^n c_k \binom{n}{k}\alpha^k\wedge \omega^{n-k}=
  c \omega^n,
\end{equation}
where $c_i \geq 0$ are given non-negative constants, and $c \geq 0$ is
determined by the $c_i$ by integrating the equation over $M$. 

The
question of looking at general equations of this form was raised by Chen~\cite{Chen00},
and some special cases beyond the $J$-flow were treated by
Fang-Lai-Ma~\cite{FLM11}, and also by Guan-Sun~\cite{GS13},
Sun~\cite{Sun13} on Hermitian manifolds. 
More general non-linear flows related to
the inverse $\sigma_k$-equations were investigated by
Fang-Lai~\cite{FL12}.
The particular Equation~\eqref{eq:twisted intro} was studied by
Zheng~\cite{Zheng14}.
In \cite{FLM11} it was shown
that a solution to these special cases of equation~\eqref{eq:1} exist if and only if there
is a metric $\chi\in\Omega$ satisfying a positivity condition
analogous to \eqref{eq:BWcond}. We show that such a result holds for
the general equation too, as was conjectured by Fang-Lai-Ma. 

\begin{thm}\label{thm:1}
  Equation~\eqref{eq:1} has a solution $\omega\in \Omega$ if and only
  if we can
  find a metric $\chi\in \Omega$ such that
  \begin{equation}\label{eq:BJpositivity}
 c\chi^{n-1} - \sum_{k=1}^{n-1} c_k\binom{n-1}{k}\chi^{n-k-1}\wedge
  \alpha^k > 0, 
\end{equation}
  in the sense of positivity of $(n-1,n-1)$-forms. 
\end{thm}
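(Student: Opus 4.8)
The plan is to run the continuity/a priori estimates method, following the structure of Song--Weinkove~\cite{SW04} and Fang--Lai--Ma~\cite{FLM11}, but replacing the delicate second-order estimate that uses concavity/convexity of the operator with the general $C^2$ machinery developed in Section~\ref{sec:C2} of this paper. The necessity direction is immediate: if $\omega\in\Omega$ solves \eqref{eq:1}, then $\chi=\omega$ satisfies \eqref{eq:BJpositivity}. Indeed, fixing a point and diagonalizing $\alpha$ against $\omega$ with eigenvalues $\lambda_i>0$, the equation reads $\sum_k c_k \sigma_k(\lambda)/\sigma_{k-1}(\lambda)\cdot(\text{normalizations})=c$ — more precisely $c=\sum_{k=1}^n c_k\binom{n}{k}\sigma_k(\lambda)/\binom{n}{n}$ divided appropriately — and contracting the form $c\omega^{n-1}-\sum_{k=1}^{n-1}c_k\binom{n-1}{k}\omega^{n-k-1}\wedge\alpha^k$ against the positive $(1,1)$-vectors $\partial_i\wedge\bar\partial_i$ gives, in each such direction, the expression $c - \sum_{k=1}^{n-1} c_k\binom{n-1}{k}\sigma_{k}(\lambda\mid i)$ where $\sigma_k(\lambda\mid i)$ is the $k$-th symmetric function of the $\lambda_j$ with $j\neq i$; since $\sigma_k(\lambda\mid i)\le \sigma_k(\lambda)$ in the relevant normalization and the top term $c_n\binom{n}{n}\sigma_n(\lambda)$ is dropped, this is strictly positive. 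So the real content is sufficiency.

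For sufficiency, assume \eqref{eq:BJpositivity} holds for some $\chi\in\Omega$. I would set up the continuity path $F_t(A)=c_t$, where $F(A)=\sum_k c_k\binom{n}{k}\sigma_{n-k}(A)/\sigma_n(A)$ with $A^i_j=\alpha^{i\bar k}\omega_{j\bar k}$ (so that $F(A)\omega^n$ is the left side of \eqref{eq:1} as a multiple of $\omega^n$), deforming from a metric where the equation is trivially solvable (e.g.\ by adding a large multiple of $\chi$, or running from $\chi$ itself with modified constants $c_k(t)$) to $t=1$. Openness follows from the implicit function theorem once we know the linearized operator is invertible, which holds because $F$ is elliptic in the positive cone (each $\sigma_{n-k}/\sigma_n$ is, being a positive combination of inverse-$\sigma$ operators, whose linearization has positive-definite coefficient matrix $F^{ij}$). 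Closedness reduces, as usual, to a priori estimates: zeroth order (bound on $\mathrm{osc}\,\phi$ for $\omega=\chi+\ddbar\phi$) via the Aleksandrov--Bakelman--Pucci / Błocki argument, exactly as in \cite{SW04}, using the subsolution $\chi$; first order estimates by a standard maximum principle argument combining $|\nabla\phi|^2$ with the operator, again following \cite{FLM11}; and then the crucial second-order estimate, an upper bound on the eigenvalues of $\omega$ with respect to $\chi$ (the lower bound then coming from the equation itself). Finally, the $C^{2,\alpha}$ estimate and bootstrapping give full regularity.

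The main obstacle — and the place where we genuinely need the new input of this paper — is the $C^2$ estimate, because for general $c_k$ the operator $F$ is neither concave nor convex and the level set geometry degenerates. Here the plan is to invoke the structural framework of Section~\ref{sec:C2}: one shows that the hypotheses (ellipticity, the cone condition, and a ``$\Gamma$-condition'' ensuring the eigenvalues stay in a bounded region when the equation holds and the subsolution exists) are met, and then applies the general second-order estimate proved there, whose proof in turn rests on the observation that an appropriate power of a $\sigma_n$-type quantity is a supersolution for the linearized equation, so that the Caffarelli--Yuan~\cite{CY00} techniques apply. The existence of the subsolution $\chi$ enters precisely to control the ``bad'' directions in the second-order estimate — it is what converts the numerical positivity \eqref{eq:BJpositivity} into a pointwise inequality usable in the maximum principle, via the argument of \cite{GS13, Sun13} adapted to the non-concave setting. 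Once the $C^2$ bound is in hand, the rest is routine, and letting $t\to 1$ produces the desired $\omega\in\Omega$ solving \eqref{eq:1}.
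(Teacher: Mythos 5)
Your necessity direction and the high-level shape (continuity method anchored by the subsolution $\chi$, with the $C^2$ estimate as the crux) match the paper. But there is a genuine conceptual error in your sufficiency argument: you claim that for general $c_k\geq 0$ the operator $F$ is ``neither concave nor convex,'' and you therefore route the $C^{2,\alpha}$ estimate through the supersolution trick and the Caffarelli--Yuan techniques. This is a misreading of where the non-convexity arises. For Theorem~\ref{thm:1} the relevant operator is $F(A)=\sum_{k=1}^n c_k S_k(A^{-1})$ in the complex variable $A^i_j=\alpha^{i\bar k}\omega_{j\bar k}$, and Lemma~\ref{lem:convex} shows this map \emph{is} convex on positive Hermitian matrices (indeed it satisfies the stronger inequality~\eqref{eq:Fconvex}). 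Consequently the paper's proof of Theorem~\ref{thm:1visc} applies Evans--Krylov directly to get $C^{2,\alpha}$ once the real-Hessian bound from Proposition~\ref{prop:BenC2} is in hand; no Caffarelli--Yuan argument is involved. The $\mathrm{Tr}(M)+d\det(M)$ operator that is neither concave nor convex, for which $\det(D^2h)^{1/n}$ is shown to be a supersolution of the linearization and the Caffarelli--Yuan machinery is invoked, appears only in Section~\ref{sec:toric} after passing to the Legendre transform in the toric case (Proposition~\ref{prop:C2alpha}), and has nothing to do with Theorem~\ref{thm:1}. If you tried to carry your plan out as stated, you would find there is no place to insert that supersolution argument in the complex inverse-$\sigma_k$ setting, and conversely that you do not need it.

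A second, smaller gap is the continuity path. You gesture at ``adding a large multiple of $\chi$'' or ``modified constants $c_k(t)$'' but do not pin down a path for which openness and the starting point are both clear. The paper uses a specific and clean choice: it solves $F(A)+d\,\alpha^n/\omega^n=c_d$ for $d\in[0,\infty)$, using Yau's theorem at $d\gg 1$ as the anchor, and the key observation that the limit function $\widetilde F$ (and hence the subsolution condition) is unchanged by the extra $d\,\alpha^n/\omega^n$ term, so Proposition~\ref{prop:BenC2} applies uniformly along the whole path. Your description of the $C^0$ estimate as ABP/B\l{}ocki is also not what the paper does: the exponential-form $C^2$ bound $\Lambda_\alpha\omega<Ce^{N(\phi-\inf\phi)}$ from Proposition~\ref{prop:BenC2} is fed into Weinkove's lemma to produce the $C^0$ bound, rather than a separate zeroth-order argument. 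These are fixable, but the misattribution of the Caffarelli--Yuan step is the substantive issue.
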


Note that when $c_k= 0$ for $k < n$, then Equation~\eqref{eq:1} is
simply a complex Monge-Amp\`ere equation, which can always be solved
by Yau's Theorem~\cite{Yau78}. In this case the positivity condition
\eqref{eq:BJpositivity} is always satisfied, so Theorem~\ref{thm:1} is
a generalization of Yau's Theorem.  
We expect that our methods can be used to generalize
Theorems~\ref{thm:2} and \ref{thm:3} to more general equations of
this form, but we will leave a detailed study of this to future work,
except for the following result that is needed in reducing
Theorem~\ref{thm:3} to the case of torus invariant $\alpha$. The proof
of this is similar to, but simpler than that of Theorem~\ref{thm:2}. 

\begin{thm}\label{thm:indep}
Suppose that Equation~\eqref{eq:1} has a solution, and $c_n > 0$. Then
the equation can also be solved if $\alpha$ is replaced by any other metric $\beta
\in [\alpha]$. 
\end{thm}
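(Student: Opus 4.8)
The plan is to imitate the continuity-method / a priori estimate argument that presumably proves Theorem~\ref{thm:2}, but in the simpler setting where the extra Monge--Amp\`ere-type term with coefficient $c_n>0$ is present, which provides a built-in zeroth-order control. Concretely, starting from a solution $\omega_0\in\Omega$ of Equation~\eqref{eq:1} for the metric $\alpha$, and given another metric $\beta=\alpha+\ddbar f\in[\alpha]$, I would consider the family of metrics $\alpha_t=(1-t)\alpha+t\beta\in[\alpha]$ for $t\in[0,1]$ (these are all K\"ahler), and study the equation
\begin{equation*}
\sum_{k=1}^n c_k\binom{n}{k}\alpha_t^k\wedge\omega^{n-k}=c\,\omega^n,\qquad \omega=\omega_0+\ddbar\phi_t\in\Omega,
\end{equation*}
where $c$ is the same constant throughout (note $\int_M$ of the left-hand side is independent of the representative in $[\alpha]$, so the same $c$ works for all $t$). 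Let $S\subset[0,1]$ be the set of $t$ for which this is solvable with $\omega>0$. Openness of $S$ follows from the implicit function theorem, since the linearization of the operator is a second-order elliptic operator with no zeroth-order term, which is invertible on the space of functions with zero average after the usual normalization; $0\in S$ by hypothesis. So the whole content is closedness, i.e.\ a priori estimates.

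**Next I would** set up the a priori estimates. The key point, and the reason this is ``simpler than Theorem~\ref{thm:2}'', is that $c_n>0$ makes Equation~\eqref{eq:1} \emph{non-degenerate} in the Monge--Amp\`ere direction: rewriting the equation as
\begin{equation*}
c_n\binom{n}{n}\frac{\omega^n}{\omega^n}=c - \sum_{k=1}^{n-1}c_k\binom{n}{k}\frac{\alpha_t^k\wedge\omega^{n-k}}{\omega^n},
\end{equation*}
one sees that the equation controls $\alpha_t^n/\omega^n$ from above and below once one has upper and lower bounds on the eigenvalues of $A^i_j=\alpha_t^{i\bar k}\omega_{j\bar k}$ — more usefully, the structure is that of a concave/elliptic equation with a determinant term, so the zeroth-order (i.e.\ $C^0$) estimate for $\phi_t$ should come from a Moser iteration or Alexandrov--Bakelman--Pucci-type argument as in Yau's theorem and its generalizations, using only that $c_n>0$ bounds things away from the degenerate locus. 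Given the $C^0$ estimate, the second-order estimate $\omega\le C\,\alpha_t$ follows from the maximum principle applied to a quantity like $\Delta_\omega \log\mathrm{tr}_{\alpha_t}\omega - A\phi_t$, exactly as in the Song--Weinkove argument for the $J$-flow; this uses only that $\alpha_t$ ranges over a compact family of K\"ahler metrics with uniformly bounded geometry. Note that because $\alpha_t\in[\alpha]$ stays in a fixed class, the constant $c$ and the cohomological identities don't drift, and the metrics $\alpha_t$ and their curvature are uniformly controlled for $t\in[0,1]$.

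**The main obstacle will be** the higher-order estimates, specifically passing from the $C^2$ bound to $C^{2,\gamma}$: the operator $M\mapsto \sum_{k}c_k\sigma_k(M)/\sigma_n(M)$ (equivalently the left side of \eqref{eq:1} divided by $\omega^n$) need not be concave or convex, so standard Evans--Krylov does not apply directly, which is precisely the difficulty flagged in the introduction for Equation~\eqref{eq:1}. However, since $c_n>0$, I expect one can invoke the Caffarelli--Yuan~\cite{CY00} machinery (or the adaptation used elsewhere in this paper via showing $\det(D^2h)^{1/n}$ is a supersolution of the linearized equation) because the $c_n$-term makes the level sets $\{F=c\}$ behave like those of a Monge--Amp\`ere equation near the relevant range, and the eigenvalues of $A$ are already pinched into a compact region by the $C^0$ and $C^2$ estimates together with $c_n>0$. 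Once $C^{2,\gamma}$ is in hand, Schauder bootstrapping gives all higher estimates and closedness of $S$; then $1\in S$ yields the desired $\omega'\in\Omega$ solving \eqref{eq:1} with $\beta$ in place of $\alpha$. I would remark that the role of the hypothesis $c_n>0$ is exactly to supply the a priori zeroth- and highest-order control for free, which is why the degenerate case (the pure $J$-flow, $c_k=0$ for $k<n$ excluded) requires the much more delicate argument of Theorem~\ref{thm:2}.
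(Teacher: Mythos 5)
Your proposed direct continuity method in $t$ along the path $\alpha_t=(1-t)\alpha+t\beta$ has a genuine gap at the a~priori estimate stage, and as a result it does not reflect what the paper actually does. The claim that $c_n>0$ yields a $C^0$ estimate for $\phi_t$ ``by Moser iteration or ABP as in Yau's theorem'' does not go through: unlike the Monge--Amp\`ere equation, the equation $\sum_k c_k\binom{n}{k}\alpha_t^k\wedge\omega^{n-k}=c\,\omega^n$ does not determine $\omega^n$ pointwise (the term $c_n\alpha_t^n/\omega^n$ is only one summand, and it gives a one-sided bound $\omega^n\geq (c_n/c)\alpha_t^n$, not two-sided control). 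More seriously, the second-order estimate you invoke --- ``the maximum principle applied to $\log\mathrm{tr}_{\alpha_t}\omega-A\phi_t$ exactly as in Song--Weinkove'' --- is precisely Proposition~\ref{prop:BenC2} of this paper, and that estimate requires a subsolution $\chi$ satisfying $\widetilde F(\alpha_t^{i\bar p}\chi_{j\bar p})\leq c-\delta$. You have no way to produce such a subsolution for each $\alpha_t$ along the path, and constructing one is in essence the content of the theorem. So the estimates you need to prove closedness are exactly what is not available a~priori, and the argument does not close.

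The paper's route is genuinely different. It does not run a continuity method in the reference metric at all. Instead it perturbs the operator to $F_{\kappa,\epsilon}(A)=\sum_k c_k S_k(A^{-1})+\kappa S_1(A^{-1})-\epsilon S_n(A^{-1})$ (here $c_n>0$ is used only to guarantee the structural conditions for small $\epsilon$), and exploits the energy functional $\mathcal J_{\kappa,\epsilon}$: solvability for $\alpha$ plus the implicit function theorem gives properness of $\mathcal J_{\kappa,\epsilon}$, and properness is stable under changing $\alpha$ to $\beta\in[\alpha]$ because $\mathcal J_\beta-\mathcal J_\alpha$ is a bounded functional (the same computation as Proposition~\ref{prop:Jproper}). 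Properness is then fed into the gradient flow to produce metrics with $F_{\kappa,\epsilon}(B_k)\to c_{\kappa,\epsilon}$ in $L^2$, the Blocki--Ko\l{}odziej mollification produces a K\"ahler current that is a strict viscosity subsolution of the unperturbed equation for $\beta$, and Theorem~\ref{thm:1visc} then solves it. The point is that one never directly estimates solutions along a deformation; one shows the energy-theoretic hypothesis of Theorem~\ref{thm:1visc} is invariant under replacing $\alpha$ by $\beta$, which sidesteps the missing subsolution entirely. If you want to run a continuity-method proof, you would first need to supply the subsolution for each $\alpha_t$, and the only mechanism in the paper for doing that is exactly this flow/mollification machinery.
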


A brief summary of the contents of the paper is as follows. In
Section~\ref{sec:convex} we recall some basic convexity properties of
the elementary symmetric functions, which play a key role in the later
calculations. In Section~\ref{sec:C2} we generalize the
$C^2$-estimates of Song-Weinkove~\cite{SW04} and prove
Theorem~\ref{thm:1}. While the basic ideas are similar to those in
\cite{SW04} and also Fang-Lai-Ma~\cite{FLM11}, Fang-Lai~\cite{FL12} we hope that our
more streamlined proof highlights the required structural conditions for the
equation. In Section~\ref{sec:thm1} we prove Theorems~\ref{thm:2} and \ref{thm:indep}. A
key ingredient is a smoothing construction, based on work of
Blocki-Kolodziej~\cite{BK07} on regularizing plurisubharmonic
functions. The remainder of the paper is concerned with
the proof of Theorem~\ref{thm:3}. We expect that
our inductive method of proof will be helpful in resolving Conjecture~\ref{conj:LSz} on
non-toric manifolds as well, although there are certainly new
difficulties in the general case. 

\section{Convexity properties}\label{sec:convex}
In this section we collect some calculations relating to the inverse
$\sigma_k$-operator. The results are well known, and available in the
literature (for instance see Spruck~\cite{Spruck05}), but for the reader's
convenience we present the calculations here. 

For an $n$-tuple of numbers $\lambda_i$, denote by 
\[ S_k(\lambda_i) = \sum_{1 \leq j_1 < j_2 < \ldots < j_k \leq n}
\lambda_{j_1}\lambda_{j_2}\cdots \lambda_{j_k} \]
the elementary symmetric function of degree $k$. We have $S_0 = 1$ and
$S_{-1}=0$. For distinct indices $i_1,
\ldots, i_l$ let us write 
\[ S_{k; i_1,\ldots, i_l}(\lambda_i) = S_k(\lambda_i)|_{\lambda_{i_1}
  = \ldots = \lambda_{i_l} = 0}, \]
while if $i_1,\ldots, i_l$ are not distinct, then $S_{k,i_1,\ldots,
  i_l}(\lambda_i) = 0$. 
Given an $n\times n$ matrix $A$ we will also write $S_k(A)$ for the
elementary symmetric function of the eigenvalues of $A$, and in
addition if $A$ is diagonal then we define $S_{k; i_1,\ldots, i_l}(A)$
by letting $\lambda_i = A_{ii}$.

\begin{lemma}
  The derivatives of $S_k$ at a diagonal matrix $A$ are given by
  \[ \begin{aligned}
              \partial_{ij} S_k(A) &= \begin{cases} S_{k-1;i}(A),\text{
                  if }i=j \\
0,\text{ otherwise}  \end{cases}\\
\partial_{ij}\partial_{rs} S_k(A)&= \begin{cases} S_{k-2;i,r}(A),\text{ if }
  i=j, r=s, i\not=r \\
-S_{k-2;i,j}(A),\text{ if }i\not=j, r=j, s=i \\
0,\text{ otherwise}\end{cases}
\end{aligned}\]
Here $\partial_{ij}$ means partial derivative with respect to the
$ij$-component. 
\end{lemma}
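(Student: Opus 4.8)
The plan is to compute the derivatives of $S_k$ at a diagonal matrix directly from the definition of $S_k(A)$ as a polynomial in the matrix entries, rather than as a symmetric function of eigenvalues. The starting point is the Laplace-type expansion
\[
S_k(A) = \sum_{|I| = k} \det(A_{I}),
\]
where $I$ ranges over $k$-element subsets of $\{1,\dots,n\}$ and $A_I$ is the principal submatrix of $A$ on rows and columns indexed by $I$. Differentiating a determinant $\det(A_I)$ with respect to the entry $A_{ij}$ gives the corresponding cofactor (the signed minor deleting row $i$ and column $j$), so $\partial_{ij}S_k(A) = \sum_{|I|=k} \partial_{ij}\det(A_I)$, and only those subsets $I$ containing both $i$ and $j$ contribute. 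Then one evaluates at a diagonal $A$, where all cofactors simplify enormously because the relevant minors become products of diagonal entries.

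For the first derivative: if $i = j$, the surviving terms are $\sum_{I \ni i,\ |I|=k} \prod_{r \in I \setminus\{i\}} A_{rr}$, which is exactly $S_{k-1;i}(A)$ — the degree-$(k-1)$ elementary symmetric function in the remaining $n-1$ diagonal entries, which equals $S_{k-1}$ evaluated with $\lambda_i$ set to zero. If $i \neq j$, at a diagonal matrix the cofactor of the $ij$ entry vanishes (it has a zero row or column among the diagonal entries once row $i$ and column $j$ are removed and $i\neq j$), giving $0$. For the second derivatives I would differentiate the cofactor expression once more. Differentiating $\partial_{ij}\det(A_I)$ with respect to $A_{rs}$ produces a second-order cofactor (delete rows $i,r$ and columns $j,s$, with an appropriate sign), and again one evaluates at the diagonal matrix. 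The two nonzero cases drop out naturally: the case $i=j$, $r=s$, $i \neq r$ gives the "diagonal-diagonal" second cofactor, which at a diagonal matrix is $S_{k-2;i,r}(A)$; the case $i \neq j$, $r = j$, $s = i$ gives an "off-diagonal" second cofactor whose sign is $-1$, yielding $-S_{k-2;i,j}(A)$. All other index patterns force a vanishing minor at the diagonal matrix, or are zero by the convention $S_{k;i_1,\dots,i_l} = 0$ when the indices repeat.

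The main bookkeeping obstacle is tracking the signs in the off-diagonal second-derivative case and confirming that no other configurations survive; this is best handled by the symmetric-function substitution trick. Concretely, one can reduce to the first derivative formula applied to the matrix $A$ with an infinitesimal perturbation, using the well-known identity $\partial_{ij}S_{k-1}(A)$ for the inner derivative, or equivalently by writing $S_k(A) = \lambda_i \lambda_j S_{k-2;i,j} + (\lambda_i + \lambda_j)S_{k-1;i,j} + S_{k;i,j}$ when isolating a pair of diagonal variables $\lambda_i, \lambda_j$ of a diagonal matrix and then varying the $ij$ and $ji$ entries away from zero; the quadratic-form structure in the $2\times 2$ block $\begin{pmatrix}\lambda_i & t \\ s & \lambda_j\end{pmatrix}$ has determinant $\lambda_i\lambda_j - st$, which immediately produces the $-S_{k-2;i,j}$ coefficient for $\partial_s\partial_t$ and the $S_{k-2;i,r}$ coefficient when differentiating two distinct diagonal entries. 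Once the block computation is done, the general formula follows by symmetry of $S_k$ under simultaneous permutation of rows and columns, so it suffices to verify the claim for the relevant $2\times 2$ or $3\times 3$ principal blocks.
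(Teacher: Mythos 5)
Your proposal is correct, and it takes essentially the same route as the paper: the paper's one-line proof simply invokes that $S_k(A)$ is the coefficient of $(-x)^{n-k}$ in $\det(A-xI)$, which is exactly the principal-minor expansion $S_k(A)=\sum_{|I|=k}\det(A_I)$ that you differentiate explicitly via cofactors and the $2\times 2$ block computation. You have filled in the details the paper leaves implicit, but there is no difference of method.
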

\begin{proof}
This result follows from the fact that $S_k(A)$ is the coefficient of
$(-x)^{n-k}$ in $\det(A - xI)$. 
\end{proof}

Using this, we can compute the derivatives of the inverse $\sigma_k$
operators. 

\begin{lemma}\label{lem:dij}
For $0\leq k\leq n$ let us write $F(A) = S_k(A^{-1}) = \frac{S_{n-k}(A)}{S_n(A)}$. 
At a diagonal matrix $A$, with eigenvalues $\lambda_i$, we have
\[ \begin{aligned}
\d_{ij} F(A) &= \begin{cases} -\frac{S_{n-k;i}(A)}{\lambda_i
    S_n(A)},\text{ if }i=j \\
0,\text{ otherwise}\end{cases} \\
\d_{ii}\d_{jj}F(A) &= \frac{ S_{n-k;i,j}(A)}{\lambda_i\lambda_j
    S_n(A)},\text{ if } i\not=j, \\
 \d_{ij}\d_{ji} F(A) &= \frac{ S_{n-k;i}(A) + \lambda_i S_{n-k-1;i,j}(A)}{\lambda_i\lambda_j
  S_n(A)}, \text{ if } i\not=j, \\
\d_{ii}\d_{ii}F(A) &= 2\frac{ S_{n-k;i}(A)}{\lambda_i^2 S_n(A)},\\
\d_{ij}\d_{rs}F(A) &= 0,\text{ otherwise}.
\end{aligned}\]
\end{lemma}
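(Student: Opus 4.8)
The plan is to deduce these formulas from the previous lemma by the chain rule, writing $F = S_k\circ\iota$ where $\iota(A) = A^{-1}$ is matrix inversion. The point is that $\iota$ sends the diagonal matrix $A$ with eigenvalues $\lambda_i$ to the diagonal matrix $A^{-1}$ with eigenvalues $\lambda_i^{-1}$, so the previous lemma applies at $A^{-1}$, and the derivatives of $\iota$ are the standard ones,
\[ \frac{\partial (A^{-1})_{rs}}{\partial A_{pq}} = -(A^{-1})_{rp}(A^{-1})_{qs}, \]
\[ \frac{\partial^2 (A^{-1})_{rs}}{\partial A_{uv}\partial A_{pq}} = (A^{-1})_{ru}(A^{-1})_{vp}(A^{-1})_{qs} + (A^{-1})_{rp}(A^{-1})_{qu}(A^{-1})_{vs}. \]
Evaluating everything at the diagonal matrix $A$ and using the previous lemma to expand $\partial_{rs}S_k$ and $\partial_{rs}\partial_{ab}S_k$ at $A^{-1}$, the chain rule $\partial^2(S_k\circ\iota) = (\partial^2 S_k)(\partial\iota,\partial\iota) + (\partial S_k)(\partial^2\iota)$ produces, case by case, an expression in the quantities $S_m(A^{-1})$, $S_{m;i}(A^{-1})$ and $S_{m;i,j}(A^{-1})$.

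It then remains to rewrite these in terms of $A$, which, since $A$ is diagonal, is purely combinatorial: for a subset $I$ of size $\ell$ one has the complementation identity
\[ S_m\bigl(\{\lambda_i^{-1}\}_{i\notin I}\bigr) = \frac{S_{n-\ell-m}\bigl(\{\lambda_i\}_{i\notin I}\bigr)}{\prod_{i\notin I}\lambda_i}, \]
which for $\ell = 0,1,2$ gives $S_m(A^{-1}) = S_{n-m}(A)/S_n(A)$, $\ S_{m;i}(A^{-1}) = \lambda_i S_{n-1-m;i}(A)/S_n(A)$, and $S_{m;i,j}(A^{-1}) = \lambda_i\lambda_j S_{n-2-m;i,j}(A)/S_n(A)$. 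Substituting these and simplifying with the splitting identity $S_m(A) = \lambda_i S_{m-1;i}(A) + S_{m;i}(A)$ (and its two-index version) yields each of the five formulas in the statement.

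The step I would carry out most carefully is the off-diagonal mixed case $\partial_{ij}\partial_{ji}F$ with $i\neq j$. There both pieces of the chain rule contribute: the Hessian of $S_k$ gives a $-S_{k-2;i,j}(A^{-1})$ term, while the second derivative of $\iota$ contributes two further terms $S_{k-1;i}(A^{-1})$ and $S_{k-1;j}(A^{-1})$; after converting to $A$ and applying the splitting identity twice this combination collapses to $S_{n-k;i}(A) + \lambda_i S_{n-k-1;i,j}(A)$, which accounts for the asymmetry between $\partial_{ij}\partial_{ji}F$ and $\partial_{ii}\partial_{jj}F$. By contrast, in the cases $i=j=r=s$, the first derivative, and $i=r\neq j=s$, the vanishing of the off-diagonal entries of the diagonal matrix $A^{-1}$ kills one of the two contributions, which is why those formulas are shorter. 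An alternative, slightly more direct route avoids $\iota$ altogether: differentiate the quotient $F = S_{n-k}(A)/S_n(A)$, using the previous lemma for numerator and denominator together with $S_{n-1;i}(A) = S_n(A)/\lambda_i$ for diagonal $A$; this trades the inversion bookkeeping for quotient-rule bookkeeping but needs the same two combinatorial identities, with the cross term of the quotient rule playing the role of $\partial^2\iota$ above.
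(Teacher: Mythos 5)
Your proposal is correct; the paper states Lemma~\ref{lem:dij} with no proof beyond the remark that the formulas follow from the preceding derivative formulas for $S_k$, so there is no argument in the paper to compare against, and your chain-rule computation through $\iota(A)=A^{-1}$ supplies exactly the routine calculation the authors leave to the reader. I checked the cases: the first-order formula, the $\partial_{ii}\partial_{ii}$ and $\partial_{ii}\partial_{jj}$ cases, and the vanishing of the off-pattern mixed derivatives all fall out of the support properties of $\partial\iota$, $\partial^2\iota$ and $\partial S_k$, $\partial^2 S_k$ at a diagonal point exactly as you describe; and for $\partial_{ij}\partial_{ji}F$ with $i\neq j$, the combination $-S_{n-k;i,j}(A)+S_{n-k;i}(A)+S_{n-k;j}(A)$ that your two contributions produce collapses to $S_{n-k;i}(A)+\lambda_i S_{n-k-1;i,j}(A)$ via the splitting identity $S_{n-k;j}=\lambda_i S_{n-k-1;i,j}+S_{n-k;i,j}$, matching the lemma's (asymmetric-looking but actually symmetric) expression. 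The alternative quotient-rule route you sketch, using $S_{n-1;i}(A)=S_n(A)/\lambda_i$, is equally valid and slightly lighter on bookkeeping.
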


The following expresses a strong convexity property of the inverse $\sigma_k$
operators on positive definite matrices. 

\begin{lemma}\label{lem:convex}
  Let us write $F(A) = S_k(A^{-1})$ as above.
  Then $A\mapsto F(A)$ is convex on the space of positive
  definite Hermitian matrices $A$, and in fact if $A$ is diagonal with
  eigenvalues $\lambda_i > 0$, then for any 
  matrix $B_{ij}$ we have
  \begin{equation}\label{eq:Fconvex}
    \sum_{p,q,r,s} B_{rs}\overline{B_{qp}} (\partial_{pq}\partial_{rs} F(A))
    + \sum_{i,j} |B_{ij}|^2 \frac{\d_{ii}F(A)}{\lambda_j} \geq 0.
    \end{equation}
\end{lemma}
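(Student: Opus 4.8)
The plan is to use the explicit formulas for the second derivatives of $F$ from Lemma~\ref{lem:dij} and to reorganize the left-hand side of \eqref{eq:Fconvex} into a manifestly nonnegative expression. First I would split the sum $\sum B_{rs}\overline{B_{qp}}\,\partial_{pq}\partial_{rs}F(A)$ according to the three nonzero cases listed in Lemma~\ref{lem:dij}: the diagonal–diagonal terms with $i\neq j$, the ``transpose-pair'' terms $\partial_{ij}\partial_{ji}F$ with $i\neq j$, and the pure fourth-order diagonal terms $\partial_{ii}\partial_{ii}F$. Absorbing the last into the $i=j$ part of the second sum $\sum|B_{ij}|^2\partial_{ii}F(A)/\lambda_j$, the whole expression should collapse to a combination of two types of contributions: (i) a quadratic form in the \emph{real diagonal entries} $B_{ii}$ built from $S_{n-k;i,j}/(\lambda_i\lambda_j S_n)$, and (ii) for each unordered pair $i<j$ a contribution from $|B_{ij}|^2$ and $|B_{ji}|^2$ together with the relevant $\partial_{ij}\partial_{ji}F$ term. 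I would then treat (i) and (ii) separately.

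For the off-diagonal part (ii): fixing $i\neq j$, the coefficient of $|B_{ij}|^2$ coming from the second sum is $\partial_{ii}F(A)/\lambda_j = -S_{n-k;i}(A)/(\lambda_i\lambda_j S_n(A))$, and the cross term from $\partial_{ij}\partial_{ji}F$ contributes $2\,\mathrm{Re}(B_{rs}\overline{B_{qp}})$-type pairings, i.e. effectively $(|B_{ij}|^2+|B_{ji}|^2)\cdot\frac{S_{n-k;i}(A)+\lambda_i S_{n-k-1;i,j}(A)}{\lambda_i\lambda_j S_n(A)}$ after summing the $(p,q,r,s)=(i,j,j,i)$ and $(j,i,i,j)$ contributions and using symmetry of $\lambda_i\lambda_j$. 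Adding the diagonal-sum terms $\partial_{ii}F/\lambda_j$ and $\partial_{jj}F/\lambda_i$ to this, the coefficient of each $|B_{ij}|^2$ should reduce to $\frac{\lambda_i S_{n-k-1;i,j}(A)}{\lambda_i\lambda_j S_n(A)} = \frac{S_{n-k-1;i,j}(A)}{\lambda_j S_n(A)} \geq 0$, since all $\lambda$'s are positive and the truncated elementary symmetric functions of positive numbers are nonnegative. So the off-diagonal block is nonnegative term by term.

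For the diagonal part (i): after the reductions the remaining expression in the variables $x_i := B_{ii}$ is $\sum_{i\neq j}\frac{S_{n-k;i,j}(A)}{\lambda_i\lambda_j S_n(A)}x_i\overline{x_j} + \sum_i\frac{2 S_{n-k;i}(A)}{\lambda_i^2 S_n(A)}|x_i|^2$ plus the leftover pieces of $\sum_i|B_{ii}|^2\partial_{ii}F/\lambda_i$; I would collect these into a Hermitian quadratic form $\sum_{i,j}Q_{ij}x_i\overline{x_j}$ and show $Q\geq 0$. The natural way is to recognize this as (up to the positive factor $1/S_n(A)$) the Hessian of the convex function $y\mapsto S_k$ of the reciprocals $y_i=\lambda_i^{-1}$ restricted to diagonal matrices — equivalently, to invoke the classical fact that $S_k(y_1,\dots,y_n)$ is convex on the positive orthant (Maclaurin/Newton inequalities, or a direct computation), combined with the chain rule for the substitution $y_i=1/\lambda_i$, whose second-order terms produce exactly the extra $S_{n-k;i}/\lambda_i^2$ pieces. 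I expect the main obstacle to be purely bookkeeping: matching the two ``$S_{n-k;i}$'' contributions (one from the chain-rule Hessian of $1/\lambda_i$, one from $\partial_{ii}F/\lambda_i$ in the second sum) and keeping the indices $n-k$, $n-k-1$, and the excluded-index subscripts straight, so that everything reassembles into sums of products of \emph{positive} quantities with no residual indefinite cross terms. Once the diagonal block is identified with the manifestly positive-semidefinite Hessian of a convex function and the off-diagonal block is shown to be a sum of nonnegative multiples of $|B_{ij}|^2$, inequality \eqref{eq:Fconvex} follows, and convexity of $A\mapsto F(A)$ on all positive definite Hermitian matrices follows by unitary invariance of $F$ together with the standard reduction to the diagonal case.
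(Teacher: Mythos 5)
Your decomposition is structurally the same as the paper's, and the off-diagonal block is handled essentially correctly: after grouping the $(p,q,r,s)=(i,j,j,i)$ and $(j,i,i,j)$ contributions for $i\neq j$ and adding the terms $\partial_{ii}F/\lambda_j$ and $\partial_{jj}F/\lambda_i$, the net coefficient of $|B_{ij}|^2$ is $\dfrac{(\lambda_i+\lambda_j)\,S_{n-k-1;i,j}(A)}{\lambda_i\lambda_j S_n(A)}$ (slightly different from the expression you wrote, but still manifestly nonnegative). The remaining diagonal block is then exactly $\sum_{i,j}\frac{M_{ij}}{\lambda_i\lambda_j S_n(A)}B_{ii}\overline{B_{jj}}$ with $M_{ij}=S_{n-k;i,j}(A)+\delta_{ij}S_{n-k;i}(A)$, which is what the paper also isolates.

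The gap is in how you propose to show that diagonal block is nonnegative. First, $S_k$ is \emph{not} convex on the positive orthant for $k\geq 2$: already for $n=k=2$, $S_2(y_1,y_2)=y_1y_2$ has Hessian with zero diagonal and $1$ off the diagonal, which is indefinite (the Newton--Maclaurin inequalities give concavity of $S_k^{1/k}$, not convexity of $S_k$). So the composition-with-$y_i=\lambda_i^{-1}$ argument fails at the first step. Second, even granting convexity of the restriction of $F$ to diagonal matrices (which is true, but is a consequence of the lemma and therefore circular), the quadratic form $\frac{M_{ij}}{\lambda_i\lambda_j S_n}$ is \emph{not} that Hessian: the Hessian of $\lambda\mapsto S_k(\lambda^{-1})$ has diagonal entries $2S_{n-k;i}/(\lambda_i^2 S_n)$, twice those of $M_{ij}/(\lambda_i\lambda_j S_n)$. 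Inequality \eqref{eq:Fconvex} restricted to diagonal $B$ is precisely $M\geq 0$, which is strictly stronger than convexity of the restriction, so no chain-rule bookkeeping can extract it from a (true or false) convexity statement about $S_k$. What your proof is missing is the structural identity the paper uses, following Fang--Lai--Ma: $M=\sum_{|I|=n-k}\lambda_I E_I$, where $\lambda_I=\prod_{l\in I}\lambda_l>0$ and $E_I$ is the rank-one matrix with $(E_I)_{ij}=1$ if $i,j\notin I$ and $0$ otherwise; equivalently, $\sum_{i,j}M_{ij}x_i\overline{x_j}=\sum_{|I|=n-k}\lambda_I\bigl|\sum_{i\notin I}x_i\bigr|^2\geq 0$. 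This algebraic decomposition, not a convexity fact about $S_k$, is the real content behind the diagonal block.
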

\begin{proof}
Suppose that $A$ is diagonal. Given $B_{ij}$ Hermitian, using the
previous result we can compute
\[ \begin{aligned}
\sum_{p,q,r,s} B_{rs}\overline{B_{qp}} (\partial_{pq}\partial_{rs} F(A))&= \sum_{i,j}
B_{ii} \overline{B_{jj}} \frac{ S_{n-k;i,j}(A) + \delta_{ij} S_{n-k;i}(A)}{\lambda_i\lambda_j S_n(A)} \\
&\quad + \sum_{i,j} |B_{ij}|^2 \frac{ S_{n-k;i}(A) + \lambda_i
  S_{n-k-1;i,j}(A)}{ \lambda_i\lambda_j S_n(A)}. \\
&\geq \sum_{i,j}
B_{ii} \overline{B_{jj}} \frac{ S_{n-k;i,j}(A) + \delta_{ij}
  S_{n-k;i}(A)}{\lambda_i\lambda_j S_n(A)}\\
&\quad - \sum_{i,j} |B_{ij}|^2 \frac{\d_{ii} F(A)}{\lambda_j}.
\end{aligned}\]

It remains to show that the matrix $M_{ij} = S_{n-k;i,j}(A) +
\delta_{ij}S_{n-k;i}(A)$ is non-negative. This is shown in
Fang-Lai-Ma~\cite{FLM11} as follows. For any $(n-k)$-tuple
$I=\{i_1,\ldots, i_{n-k}\}\subset\{1,\ldots, n\}$, denote by $\lambda_I =
\lambda_{i_1}\cdots \lambda_{i_{n-k}}$, and let $E_I$ be the matrix whose
entries are
\[ (E_I)_{ij} = \begin{cases} 1,\text{ if } i,j\not\in I \\
  0,\text{ otherwise.}\end{cases}
\]
Then the matrix $M$ is non-negative because
\[ M = \sum_{|I|=n-k} \lambda_IE_I. \]
\end{proof}

The following is an even stronger convexity property of the map
$A\mapsto S_1(A^{-1})$, on the set of matrices with eigenvalues
bounded away from zero.

\begin{lemma}\label{lem:twistedconvex}
  Given $\delta > 0$, let $\mathcal{M}$ be the set of
  positive Hermitian matrices with eigenvalues $\lambda_i >
  \delta$. For $\epsilon > 0$ we let $F(A) = S_1(A^{-1}) - \epsilon
  S_n(A^{-1})$. If $\epsilon$ is sufficiently small (depending on
  $\delta$), then we have
  \begin{enumerate}
  \item $F(A) > 0$ and $\d_{ii}F(A) < 0$ for diagonal $A\in \mathcal{M}$.
    \item $F$ is convex on $\mathcal{M}$, and in fact $F$
  satisfies the inequality \eqref{eq:Fconvex} for diagonal
  $A\in\mathcal{M}$.
  \end{enumerate}
\end{lemma}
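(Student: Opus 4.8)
The plan is to split $F(A)=S_1(A^{-1})-\epsilon\,S_n(A^{-1})$ and control each piece, the point being that by Lemma~\ref{lem:convex} both $S_1(A^{-1})$ and $S_n(A^{-1})$ are convex and satisfy \eqref{eq:Fconvex}, so the only real task is to show that the second‑order convexity of $S_1(A^{-1})$ dominates $\epsilon$ times that of $S_n(A^{-1})$ on $\mathcal{M}$; this is where the lower bound $\lambda_i>\delta$ must be used. The one elementary estimate needed is that, for $A\in\mathcal{M}$ with eigenvalues $\lambda_1,\dots,\lambda_n$ and $\lambda_{\max}=\max_i\lambda_i$, the $n-1$ eigenvalues other than $\lambda_{\max}$ each exceed $\delta$, so $\det A>\delta^{n-1}\lambda_{\max}\geq\frac{\delta^{n-1}}{n}\sum_i\lambda_i$, and likewise $\prod_{l\neq i}\lambda_l>\delta^{n-1}$ for every $i$.

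For part (1), Lemma~\ref{lem:dij} gives $\partial_{ii}S_1(A^{-1})=-\lambda_i^{-2}$ and $\partial_{ii}S_n(A^{-1})=-(\lambda_i\det A)^{-1}$, so $\partial_{ii}F(A)=-\lambda_i^{-2}\bigl(1-\epsilon\prod_{l\neq i}\lambda_l^{-1}\bigr)<0$ once $\epsilon<\delta^{n-1}$; and with $\lambda_m=\min_i\lambda_i$ one has $\prod_i\lambda_i^{-1}=\lambda_m^{-1}\prod_{l\neq m}\lambda_l^{-1}<\delta^{-(n-1)}\sum_i\lambda_i^{-1}$, whence $F(A)>(1-\epsilon\delta^{-(n-1)})\sum_i\lambda_i^{-1}>0$ in the same range of $\epsilon$. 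For the inequality \eqref{eq:Fconvex} in part (2), I would use that its left‑hand side, call it $L(F;B)$, is linear in $F$, so $L(F;B)=L(S_1(A^{-1});B)-\epsilon\,L(S_n(A^{-1});B)$. A short computation from Lemma~\ref{lem:dij} gives $L(S_1(A^{-1});B)=\sum_i\lambda_i^{-3}B_{ii}^2+\sum_{i\neq j}\lambda_i^{-1}\lambda_j^{-2}|B_{ij}|^2$, which is manifestly non‑negative, while in $L(S_n(A^{-1});B)$ all off‑diagonal contributions cancel and one is left with $L(S_n(A^{-1});B)=(\det A)^{-1}\bigl(\sum_i\lambda_i^{-1}B_{ii}\bigr)^2$, i.e.\ the rank‑one part $(\det A)^{-1}\bigl(\operatorname{tr}(A^{-1}B)\bigr)^2$ of the Hessian of $\det(\cdot)^{-1}$. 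Thus \eqref{eq:Fconvex} for $F$ reduces to $\sum_i\lambda_i^{-3}B_{ii}^2\geq\frac{\epsilon}{\det A}\bigl(\sum_i\lambda_i^{-1}B_{ii}\bigr)^2$, which follows from Cauchy--Schwarz, $\bigl(\sum_i\lambda_i^{-1}B_{ii}\bigr)^2=\bigl(\sum_i\lambda_i^{-3/2}B_{ii}\cdot\lambda_i^{1/2}\bigr)^2\leq\bigl(\sum_i\lambda_i^{-3}B_{ii}^2\bigr)\bigl(\sum_i\lambda_i\bigr)$, together with $\det A\geq\frac{\delta^{n-1}}{n}\sum_i\lambda_i$, provided $\epsilon\leq\delta^{n-1}/n$.

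For the convexity of $F$ on $\mathcal{M}$ (which is convex, being the intersection of the positive cone with the half‑spaces $\{B:v^*Bv>\delta\}$), I would run the parallel computation for the genuine Hessian via the standard second‑derivative formula for spectral functions: at a diagonal $A$ and in a Hermitian direction $B$, $\left.\frac{d^2}{dt^2}\right|_{t=0}F(A+tB)=\sum_{i,j}(\partial_i\partial_j f)\,B_{ii}B_{jj}+\sum_{i\neq j}\frac{\partial_i f-\partial_j f}{\lambda_i-\lambda_j}|B_{ij}|^2$ with $f(\lambda)=\sum_l\lambda_l^{-1}-\epsilon\prod_l\lambda_l^{-1}$. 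A direct computation identifies the off‑diagonal coefficient as $\lambda_i^{-1}\lambda_j^{-1}\bigl(\lambda_i^{-1}+\lambda_j^{-1}-\epsilon(\det A)^{-1}\bigr)$, which is $\geq0$ since $(\det A)(\lambda_i^{-1}+\lambda_j^{-1})=\prod_{l\neq i}\lambda_l+\prod_{l\neq j}\lambda_l>2\delta^{n-1}\geq\epsilon$, and the diagonal quadratic form as $2\sum_i\lambda_i^{-3}B_{ii}^2-\frac{\epsilon}{\det A}\sum_i\lambda_i^{-2}B_{ii}^2-\frac{\epsilon}{\det A}\bigl(\sum_i\lambda_i^{-1}B_{ii}\bigr)^2$, which is $\geq0$ for $\epsilon\leq\delta^{n-1}/n$ by the same Cauchy--Schwarz estimate; the case of repeated eigenvalues then follows by continuity. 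The only step requiring any thought is isolating the comparison $\det A>\delta^{n-1}\lambda_{\max}$ (equivalently $\prod_{l\neq i}\lambda_l>\delta^{n-1}$) as the mechanism that converts the eigenvalue lower bound into the required dominance; once this is in hand, both claims are routine consequences of the derivative formulas of Lemma~\ref{lem:dij} and a single application of Cauchy--Schwarz.
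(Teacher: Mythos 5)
Your proof is correct and follows essentially the same approach as the paper: decompose the second-order form of $F_\epsilon$ into its $S_1(A^{-1})$ and $\epsilon S_n(A^{-1})$ contributions, use the bound $\prod_{l\neq i}\lambda_l>\delta^{n-1}$ (equivalently $S_{n-1;i}(A)>\delta^{n-1}$) furnished by $A\in\mathcal M$, and close with Cauchy--Schwarz on $\bigl(\sum_i\lambda_i^{-1}B_{ii}\bigr)^2$. Your explicit identification of the $S_n(A^{-1})$ contribution to the left-hand side of \eqref{eq:Fconvex} as the rank-one quadratic form $(\det A)^{-1}\bigl(\sum_i\lambda_i^{-1}B_{ii}\bigr)^2$ makes the reduction particularly transparent, but it is the same underlying argument.
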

\begin{proof}
  \begin{enumerate}
  \item If each eigenvalue is greater than $\delta$, then $S_n(A^{-1})
    < \delta^{-(n-1)}S_1(A^{-1})$, and so $F(A) > 0$ for sufficiently
    small $\epsilon$. In addition we have
    \[ \d_{ii} F(A) = \frac{ - S_{n-1;i}(A) + \epsilon}{\lambda_i
      S_n(A)}. \]
    Since $S_{n-1;i} > \delta^{n-1}$, we have $\d_{ii} F(A) < 0$ for
    sufficiently small $\epsilon$. 
  \item
  Using the computation in the previous lemma, we have, if
  $A$ is diagonal with eigenvalues $\lambda_i$, that
  \[ \begin{aligned}
\sum_{p,q,r,s} B_{rs}\overline{B_{pq}} &(\partial_{pq}\partial_{rs} F(A))= \sum_{i,j}
B_{ii} \overline{B_{jj}} \frac{ \delta_{ij}S_{n-1;i}(A) - \epsilon(1 + \delta_{ij})}{\lambda_i\lambda_j S_n(A)} \\
&\quad + \sum_{i,j} |B_{ij}|^2 \frac{ S_{n-1;i}(A) + \lambda_iS_{n-2;i,j}(A)-\epsilon}{ \lambda_i\lambda_j S_n(A)}. \\
&\geq \frac{1}{S_n(A)}\sum_{i}
\frac{|B_{ii}|^2}{\lambda_i^2} S_{n-1;i}(A) -
\frac{\epsilon}{S_n(A)} \left|\sum_i \frac{B_{ii}}{\lambda_i}\right|^2
- \frac{\epsilon}{S_n(A)} \sum_i \frac{|B_{ii}|^2}{\lambda_i^2} \\
&\quad - \sum_{i,j} |B_{ij}|^2 \frac{\d_{ii} F(A)}{\lambda_j}.
\end{aligned}\]
If $A\in\mathcal{M}$, then $S_{n-1;i}(A) \geq \delta^{n-1}$, and so we
get the required inequality~\eqref{eq:Fconvex} if $\epsilon$ is
sufficiently small.
\end{enumerate}
\end{proof}

\section{$C^2$-estimates} \label{sec:C2}
In this section we prove an analog of the $C^2$-estimates obtained by
Song-Weinkove~\cite{SW04} for the J-flow, and
Fang-Lai-Ma~\cite{FLM11}, Fang-Lai~\cite{FL12} for
a more general class of inverse $\sigma_k$ flows. We will need the
corresponding estimates also for manifolds with boundary, analogous to
results of Guan-Sun~\cite{GS13}.

We will work with general operators
\[ F(A) = f(\lambda_1, \ldots, \lambda_n) \]
on the space of positive Hermitian matrices, where $f$ is a symmetric function of the
eigenvalues of $A$. We will require certain structural
conditions to hold for $F$. We do not expect that these conditions are
optimal, but they are sufficient for our needs. We require that there are constants $K, C
> 0$ (with $K=\infty$ allowed), and a connected component $\mathcal{M}$ of the set $\{F(A) <
K\}$, such that if $A\in \mathcal{M}$ then in coordinates such that
$A$ is diagonal, we have
\begin{enumerate}
  \item  $F(A) > 0$, and $\d_{ii}F(A) < 0$ for all $i$, 
\item For any matrix $B_{ij}$ we have
\[ \sum_{i,j,r,s} B_{ij} \overline{B_{sr}} (\d_{ij}\d_{rs}F(A)) +
\sum_{i,j} \frac{\d_{ii}F(A)}{\lambda_j} |B_{ij}|^2 \geq 0, \]
so in particular $F$ is convex on the set $\mathcal{M}$.  
\item We have
\[ C^{-1} F(A) < \sum_k -\lambda_k\d_{kk}F(A) < CF(A) \]
\item If $\lambda_1$ denotes the smallest eigenvalue, then $-\lambda_1\d_{11}F(A)
  \geq -C^{-1}\lambda_i\d_{ii}F(A)$ for all $i$. 
\item The function $g(x_1,\ldots, x_n) = f(x_1^{-1}, \ldots,
  x_n^{-1})$ extends to a smooth function on the
  orthant $\{x_i \geq 0\}$. 
\end{enumerate}

Note that in most of our situations we will be able to take
$K=\infty$, and $\mathcal{M}$ the set of all
positive Hermitian matrices. We will only need the greater generality
that we are allowing in Section~\ref{sec:thm1}.
Let us denote by $A$ the matrix $A^i_j = \alpha^{i\bar k} g_{j\bar
  k}$. This matrix is Hermitian with respect to the inner product
defined by $\alpha$. 

\begin{lemma}\label{lem:Sddbar}
   Suppose that we work at a point in normal coordinates for the
   metric $\alpha$, such that $g$ is diagonal. In addition assume that
   $A\in \mathcal{M}$. Then we have
   \[ 
\partial_1\partial_{\bar 1} F(A) \geq  -\sum_p -\d_{pp}F(A) g_{1\bar
  1}\d_p\d_{\bar p}\log g_{1\bar 1}  - CF(A), 
\]
   where $C$ depends on $F$, $\alpha$. 
\end{lemma}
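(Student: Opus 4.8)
The plan is to compute $\partial_1\partial_{\bar 1} F(A)$ directly by differentiating the composition $F(A) = f(\lambda_1,\ldots,\lambda_n)$ twice in the coordinate direction $z_1$, and then to control the resulting error terms using the structural hypotheses (1)--(4) together with the convexity inequality from Lemma~\ref{lem:convex} (respectively hypothesis (2)). Since we work at a point in normal coordinates for $\alpha$ where $g$ is diagonal, the matrix $A$ is diagonal at that point with eigenvalues $\lambda_i = g_{i\bar i}$ (the metric $\alpha$ being Euclidean to first order there). First I would write the chain rule:
\[
\partial_1\partial_{\bar 1} F(A) = \sum_{i,j} \partial_{ij}F(A)\, \partial_1\partial_{\bar1} A^i_j + \sum_{i,j,r,s} \partial_{ij}\partial_{rs}F(A)\, \partial_1 A^i_j\, \partial_{\bar1} A^r_s,
\]
where $A^i_j = \alpha^{i\bar k}g_{j\bar k}$. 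The terms involving derivatives of $\alpha^{i\bar k}$ and first derivatives of $\alpha$ vanish or are bounded at the chosen point, contributing only a $-CF(A)$ error once one uses hypothesis (3) to bound $\sum_i |\partial_{ii}F(A)|\lambda_i \leq CF(A)$ and absorbs curvature terms of $\alpha$.

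The key simplification is that the commutation of derivatives — using that $g_{i\bar j} = \partial_i\partial_{\bar j}\phi$ for a potential, or more directly that $\partial_1 g_{i\bar j,\bar 1}$-type terms are symmetric — lets one replace $\partial_1\partial_{\bar 1}g_{i\bar j}$ by $\partial_i\partial_{\bar j}g_{1\bar 1}$ up to curvature terms. Thus the leading term becomes $\sum_p \partial_{pp}F(A)\,\partial_p\partial_{\bar p}g_{1\bar 1}$ (only diagonal second derivatives of $F$ survive since $g$ is diagonal, by Lemma~\ref{lem:dij}). Now I rewrite $\partial_p\partial_{\bar p}g_{1\bar 1} = g_{1\bar 1}\partial_p\partial_{\bar p}\log g_{1\bar 1} + g_{1\bar 1}^{-1}|\partial_p g_{1\bar 1}|^2$, so that
\[
\sum_p \partial_{pp}F(A)\,\partial_p\partial_{\bar p}g_{1\bar 1} = \sum_p \partial_{pp}F(A)\, g_{1\bar 1}\,\partial_p\partial_{\bar p}\log g_{1\bar 1} + \sum_p \partial_{pp}F(A)\, g_{1\bar 1}^{-1}|\partial_p g_{1\bar 1}|^2.
\]
Since $\partial_{pp}F(A) < 0$ by hypothesis (1), the first sum gives exactly the advertised term $-\sum_p(-\partial_{pp}F(A))\,g_{1\bar1}\,\partial_p\partial_{\bar p}\log g_{1\bar 1}$, and the gradient sum is negative, which is the wrong sign — so it must be cancelled by the second-order term of $F$.

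The main obstacle is precisely handling this bad gradient term: I would apply the convexity inequality \eqref{eq:Fconvex} (hypothesis (2)) with the matrix $B_{ij} = \partial_1 A^i_j$, which at the chosen point equals (up to bounded terms) $\partial_1 g_{i\bar j}$. The inequality says $\sum B_{rs}\overline{B_{qp}}\,\partial_{pq}\partial_{rs}F(A) \geq -\sum_{i,j}|B_{ij}|^2\,\partial_{ii}F(A)/\lambda_j$. The right-hand side, evaluated on $B_{p1} = \partial_p g_{1\bar 1}$ (using the symmetry $\partial_1 g_{p\bar 1} = \partial_p g_{1\bar 1}$ from the Kähler condition), produces exactly $+\sum_p(-\partial_{pp}F(A))\,\lambda_1^{-1}|\partial_p g_{1\bar 1}|^2$ with $\lambda_1 = g_{1\bar 1}$, matching the bad term. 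The remaining quadratic terms in $B_{ij}$ for $j\neq 1$ are harmless because they appear on the good side of the convexity inequality with the correct sign, and the off-diagonal curvature corrections contribute only to the $-CF(A)$ error via hypothesis (3). Putting these together yields the stated inequality. The delicate bookkeeping is in verifying that all the commutator/curvature terms genuinely fit inside $CF(A)$, for which hypothesis (3) (and occasionally (4) to control the smallest eigenvalue direction) is exactly what is needed.
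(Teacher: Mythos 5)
Your proposal is correct and follows essentially the same route as the paper: expand $\partial_1\partial_{\bar 1}F(A)$ by the chain rule, use the K\"ahler symmetry to convert $\partial_1\partial_{\bar 1}g_{p\bar p}$ into $\partial_p\partial_{\bar p}g_{1\bar 1}$, split this as $g_{1\bar 1}\partial_p\partial_{\bar p}\log g_{1\bar 1}+ g_{1\bar 1}^{-1}|\partial_p g_{1\bar 1}|^2$, cancel the bad gradient contribution against the Hessian of $F$ via the convexity inequality \eqref{eq:Fconvex} with $B_{ij}=\partial_1 g_{i\bar j}$, and absorb the curvature of $\alpha$ into $-CF(A)$ using condition (3). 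The only small inaccuracy is your suggestion that condition (4) might be needed; the paper's proof of this lemma uses only (1)--(3).
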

\begin{proof}
  We compute, using Lemma~\ref{lem:dij} and Lemma~\ref{lem:convex}:
  \[\begin{aligned}
    \d_1\d_{\bar 1} F(A) &= \d_1( \d_{\bar 1}A^p_q \d_{pq}F(A)) \\
    &= \d_1\d_{\bar 1}(A^p_q)\d_{pq} F(A) + (\d_1 A^r_s)(\d_{\bar
      1}A^p_q) \d_{pq}\d_{rs} F(A) \\
    &= \Big[(\d_1\d_{\bar 1}\alpha^{p\bar p})g_{p\bar p} +
    \d_1\d_{\bar 1} g_{p\bar p}\Big] \d_{pp}F(A) + (\d_1 g_{s\bar r})(\d_{\bar 1}
    g_{q\bar p}) \d_{pq}\d_{rs} F(A) \\
    &= \Big[(\d_1\d_{\bar 1}\alpha^{p\bar p})g_{p\bar p} + g_{1\bar
      1}\d_p\d_{\bar p}\log g_{1\bar 1} + g^{1\bar 1}(\d_1 g_{p\bar 1})
    (\d_{\bar 1} g_{1\bar p})\Big] \d_{pp}F(A) \\
&\quad + (\d_1 g_{p\bar q})(\d_{\bar 1}
    g_{q\bar p}) \d_{pq}\d_{rs}F(A)\\
    &\geq -\sum_p - \d_{pp}F(A) g_{1\bar 1}\d_p \d_{\bar p} \log
    g_{1\bar 1}- C\sum_p -\lambda_p d_{pp}F(A),
  \end{aligned}\]
  where we used assumption (2) with $B_{ij} = \d_1 g_{i\bar j}$. 
  The result follows by assumption (3)
  above. 
\end{proof}

Suppose now that $M$ is a compact manifold
and that $\alpha, \omega_0$ are K\"ahler metrics on
$M$. Suppose that $\omega_t = \omega_0 + \ddbar \phi_t$ satisfies the equation
\begin{equation}\label{eq:parabolic}
  \begin{cases} \frac{\d \phi_t}{\d t} &= -F(A_t), \\
    \phi_0 &= 0. \end{cases}
\end{equation}
where $A_t$ is the matrix $(A_t)^i_j =
\alpha^{i\bar k}g_{t,j\bar k}$ as above, with $g_{t,j\bar k}$ being the
components of $\omega_t$. By our assumptions on $F$, this equation is
parabolic on the set of metrics satisfying $A\in \mathcal{M}$. Our
goal is to show the long time existence of this flow, generalizing the
result of Chen~\cite{Chen04} for the J-flow and Fang-Lai~\cite{FL12}
for more general flows.

The following
shows that if $A_0\in \mathcal{M}$,  then we will have $F(A_t) < K$ for as long
as the flow exists, and so in particular $A_t\in \mathcal{M}$ as
well. 

\begin{lemma}\label{lem:FAt}
  As long as $F(A_t) < K$ along the flow, we have $F(A_t) \leq
  F(A_0)$, and $|\phi_t| < C(t+1)$ for some constant
  $C$. 
\end{lemma}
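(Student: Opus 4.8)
The plan is to use the maximum principle twice: once for the quantity $F(A_t)$ itself, and once for $\dot\phi_t = -F(A_t)$ to control oscillation, and then to integrate the resulting bound to get the claimed linear growth of $|\phi_t|$. For the first part, I would differentiate the equation $\dot\phi_t = -F(A_t)$ in time. Since $A_t = \alpha^{-1}(\omega_0 + \ddbar\phi_t)$, we have $\partial_t A_t = \alpha^{-1}\ddbar\dot\phi_t$, so differentiating gives
\[
\partial_t F(A_t) = \partial_{ij}F(A_t)\,\alpha^{i\bar k}\partial_k\partial_{\bar l}\dot\phi_t\,\big(\text{appropriate indices}\big) = L(\dot\phi_t),
\]
where $L$ is the linearized operator $L(\psi) = \partial_{pq}F(A_t)\,(\alpha^{-1}\ddbar\psi)$, which by structural assumption (1) ($\partial_{ii}F < 0$ on $\mathcal M$) is a (degenerate-)elliptic operator with no zeroth-order term. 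Hence $u := F(A_t)$ satisfies $\partial_t u = L(-u) = -L(u)$; wait — more carefully, $\partial_t F(A_t) = L(\partial_t\phi_t) = L(-F(A_t)) = -L(F(A_t))$. So $F(A_t)$ satisfies the backward-type relation $(\partial_t + L)F(A_t) = 0$ with $L$ elliptic. Evaluating at a spatial maximum of $F(A_t)$, the second-order term $L(F(A_t)) \le 0$ has a favorable sign (since $\partial_{ii}F<0$ and $\ddbar F \le 0$ at a max), so $\partial_t \max_M F(A_t) \le 0$. This gives $F(A_t) \le F(A_0)$ as long as the flow exists, which in particular keeps $A_t$ in the connected component $\mathcal M$ of $\{F < K\}$ (using that $A_0 \in \mathcal M$ and continuity in $t$).

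For the bound on $|\phi_t|$, note that $|\dot\phi_t| = |F(A_t)|$ and we have just shown $0 < F(A_t) \le F(A_0) \le C_0$ for a constant $C_0 = \max_M F(A_0)$ depending only on the initial data. Then for each fixed point $x$,
\[
|\phi_t(x)| = \Big|\int_0^t \dot\phi_s(x)\,ds\Big| \le \int_0^t |F(A_s(x))|\,ds \le C_0\, t \le C(t+1)
\]
with $C = C_0$. Actually, since $F(A_t) > 0$ by assumption (1), one even gets the two-sided bound $-C_0 t \le \phi_t \le 0$, but the stated $|\phi_t| < C(t+1)$ follows immediately; the $+1$ is just cosmetic slack.

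The main obstacle — and the only genuinely delicate point — is justifying the maximum principle argument for $F(A_t)$ rigorously when $K<\infty$, i.e., making sure the flow does not instantaneously leave $\mathcal M$ so that $L$ stays elliptic and $F$ stays defined and smooth. This is handled precisely by the bound $F(A_t)\le F(A_0) < K$ itself, so one needs a short continuity/bootstrap argument: on the (open) maximal time interval where $A_t\in\mathcal M$, the computation above applies and forces $F(A_t)\le F(A_0)$, which in turn shows $A_t$ cannot reach $\partial\mathcal M$ in finite time — hence $\mathcal M$-membership propagates. A second minor point is the commutation of $\partial_t$ with the spatial derivatives in $\ddbar\phi_t$ and the identification of $\partial_t F(A_t)$ with $L(\dot\phi_t)$; this is a routine chain-rule computation using that $F$ depends on $\phi_t$ only through $\ddbar\phi_t$, and I would state it without grinding through indices. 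Everything else (smoothness of $F$ near $\mathcal M$, ellipticity, the sign of $L(F(A_t))$ at an interior spatial max on a compact manifold) is standard.
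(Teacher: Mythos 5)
Your argument is essentially the paper's: differentiate the flow in time to get $\partial_t\dot\phi_t = -\partial_{ij}F(A_t)\,\alpha^{i\bar k}\dot\phi_{t,j\bar k}$, apply the maximum/minimum principle to this linear parabolic equation for $\dot\phi_t = -F(A_t)$ (equivalently for $F(A_t)$ itself, as you phrase it), and then integrate the resulting two-sided bound on $\dot\phi_t$. One small sign slip in your parenthetical: at a spatial maximum of $F(A_t)$ you have $\partial_{ii}F<0$ and $\d\bar\d F(A_t)\le 0$, so the term $\sum_i \partial_{ii}F\,\alpha^{i\bar i}F_{i\bar i}$ is $\ge 0$, not $\le 0$; the conclusion $\partial_t\max_M F(A_t)\le 0$ is nonetheless correct.
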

\begin{proof}
  Differentiating the equation with respect to $t$ we have
  \[ \frac{\d \dot\phi_t}{\d t} = - \d_{ij}F(A_t) \alpha^{i\bar
    k}\dot{\phi}_{t, j\bar k}. \]
  The maximum principle then implies that $\inf_M \dot{\phi}_t \geq
  \inf_M \dot{\phi}_0$, i.e. $F(A_t) \leq F(A_0)$.  Similarly we can
  bound $\sup_M \dot{\phi}_t$ which in turn allows us to bound
  $\phi_t$. 
\end{proof}

To show the long time existence of the flow, we will obtain time
dependent $C^2$-estimates along the flow. 
\begin{prop} There is a constant $C$ such that
    \[ \Lambda_\alpha \omega_t < C(t+1) \]
    along the flow as long as it exists. 
\end{prop}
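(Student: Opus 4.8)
The plan is to run a maximum principle argument on a suitable barrier quantity built from the largest eigenvalue of $A_t$, following the strategy of Song-Weinkove but using only the structural conditions (1)--(5) and the key estimate of Lemma~\ref{lem:Sddbar}. Concretely, I would consider the test function
\[ Q = \log \Lambda_\alpha \omega_t - B\phi_t + t, \]
for a large constant $B$ to be chosen, where the $-B\phi_t$ term is the standard device to absorb bad terms using the ellipticity of the linearized operator, and the $+t$ term accounts for the allowed linear-in-time growth in the conclusion. Since $\Lambda_\alpha\omega_t$ is a maximum of smooth functions (the eigenvalues of $A_t$), at a spatial maximum of $Q$ we may work in normal coordinates for $\alpha$ in which $g_t$ is diagonal and the largest eigenvalue $\lambda_1$ of $A_t$ is attained; the usual care is needed to handle the non-smoothness, e.g. by perturbing to make $\lambda_1$ simple, or by working with $\log\lambda_1$ directly as in \cite{SW04}.

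The main computation is to apply the heat operator $\partial_t - \mathcal{L}$, where $\mathcal{L} = \d_{ij}F(A_t)\alpha^{i\bar k}(\cdot)_{j\bar k}$ is the linearization, to $Q$ at the maximum point. Differentiating the flow equation \eqref{eq:parabolic} twice in the $\d_1\d_{\bar 1}$ directions produces exactly the expression estimated in Lemma~\ref{lem:Sddbar}: the third-derivative terms of $g_t$ are controlled by the convexity inequality (condition (2)), leaving a term $-\sum_p(-\d_{pp}F)g_{1\bar 1}\d_p\d_{\bar p}\log g_{1\bar 1}$ and an error $-CF(A_t)$. The crucial point is that this leftover term is precisely $-\mathcal{L}(\log\lambda_1)$ up to controllable errors (commutator terms from switching covariant derivatives, which bring in the curvature of $\alpha$ and are bounded by $C\sum_p(-\lambda_p\d_{pp}F)$, hence by $CF(A_t)$ via condition (3)). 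Thus $(\partial_t - \mathcal{L})\log\Lambda_\alpha\omega_t \leq CF(A_t) + C$ modulo a good (negative) term. Combining with $(\partial_t - \mathcal{L})(-B\phi_t) = BF(A_t) + B\,\d_{ij}F(A_t)\alpha^{i\bar k}(\omega_0 - \alpha)_{j\bar k}$ — where the second piece, since $-\d_{ij}F > 0$ and $\omega_0,\alpha$ are fixed metrics, gives a term bounded below by $c B\sum_i(-\d_{ii}F)$ — and using that $F(A_t)\leq F(A_0)$ is already bounded by Lemma~\ref{lem:FAt}, we get at the maximum of $Q$ an inequality of the shape $0 \leq C - cB\sum_i(-\d_{ii}F) + \text{(bounded)}$.

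To close the argument I would then exploit that when $\lambda_1 = \Lambda_\alpha\omega_t$ is very large, $\sum_i(-\d_{ii}F(A_t))$ is bounded below away from zero: indeed condition (4) says $-\lambda_1\d_{11}F \geq -C^{-1}\lambda_i\d_{ii}F$ for all $i$, so $-\d_{11}F \geq C^{-1}\lambda_i\lambda_1^{-1}(-\d_{ii}F)$; summing and using condition (3), $\lambda_1(-\d_{11}F) \gtrsim \sum_i(-\lambda_i\d_{ii}F) \gtrsim F(A_t) \geq c_0 > 0$ (the lower bound on $F$ along the flow comes from the flow being parabolic and $A_t$ staying in $\mathcal{M}$, or one argues directly that $F$ cannot go to zero), whence $-\d_{11}F \gtrsim \lambda_1^{-1}$, and more to the point $\sum_i (-\d_{ii}F) \geq -\d_{11}F$ together with the structure forces a definite lower bound once $\lambda_1$ is large — alternatively one uses that $g$ in condition (5) is smooth up to the boundary to see $\sum_i(-\d_{ii}F)$ stays bounded below. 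This gives a contradiction for $B$ chosen large enough unless $\lambda_1$ is bounded at the maximum of $Q$, and then $Q\leq C$ everywhere yields $\log\Lambda_\alpha\omega_t \leq C + B\phi_t - t \leq C + B\cdot C(t+1)$ by Lemma~\ref{lem:FAt}, which is the desired bound (after exponentiating, the $t+1$ growth is what appears). The main obstacle I anticipate is the bookkeeping of the commutator/curvature error terms and verifying they are genuinely absorbed by the good term coming from the convexity inequality (2) — this is where the precise form of Lemma~\ref{lem:Sddbar} is essential — together with making the non-smoothness of the top eigenvalue rigorous.
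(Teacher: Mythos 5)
Your overall strategy — a maximum-principle argument on $\log\Lambda_\alpha\omega_t$ corrected by a multiple of $\phi_t$, with Lemma~\ref{lem:Sddbar} supplying the key inequality for the third-derivative terms — is the same as the paper's. But there is a genuine gap in how you close, and it traces back to having only one free parameter $B$ (and the wrong sign on the time term). In your $Q = \log\Lambda_\alpha\omega_t - B\phi_t + t$, the contribution of the $-B\phi_t$ piece to $(\partial_t - \mathcal{L})Q$ is
\[
(\partial_t - \mathcal{L})(-B\phi_t) = BF(A_t) + B\sum_p(-\d_{pp}F)\big(g_{t,p\bar p} - g_{0,p\bar p}\big),
\]
(note: $\ddbar\phi_t = \omega_t - \omega_0$, not $\omega_0 - \alpha$ as you wrote). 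The good term here is $-B\sum_p(-\d_{pp}F)\,g_{0,p\bar p} \lesssim -B\delta\sum_p(-\d_{pp}F)$, but the bad terms $BF(A_t) + B\sum_p(-\d_{pp}F)g_{t,p\bar p}$ are, by condition (3), bounded by $\sim B\,F(A_t) \sim BK$. Both the good and the bad contributions scale \emph{linearly} in $B$. So even granting the claimed uniform lower bound $\sum_i(-\d_{ii}F)\geq c_1>0$, your final inequality has the shape $0\le C'(1+B) - (B\delta - C)\,c_1$, which is a contradiction only if the fixed constants happen to satisfy $\delta c_1 > K(1+C)$ — there is no freedom left in $B$ to force this. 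Moreover, your $+t$ term contributes $+1$ to $(\partial_t-\mathcal{L})Q$, which only makes matters worse.

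The paper resolves exactly this by using the ansatz $\log\Lambda_\alpha\omega_t - N_1 t - N_2\phi_t$ with \emph{two} constants chosen in sequence: $N_2$ is taken large enough that $N_2\sum_p(-\d_{pp}F)g_{0,p\bar p}$ dominates the $C\sum_p(-\d_{pp}F)$ error coming from Lemma~\ref{lem:Sddbar}; this leaves a residual $\lesssim N_2\,F(A_t)\leq N_2 K$ plus a free $-N_1$. Then $N_1$ is taken larger still to kill that residual, giving the contradiction. This never requires a lower bound on $\sum_i(-\d_{ii}F)$. I would also flag that your justification of that lower bound is flawed as written: condition (4) concerns the \emph{smallest} eigenvalue, whereas you invoke it for $\lambda_1 = \Lambda_\alpha\omega_t$, the largest. (A correct derivation would instead use the upper bound $F(A_t) \geq c_0$ to bound the smallest eigenvalue from \emph{above}, and then apply (3)--(4); but as explained above, this bound is not needed and does not suffice to close your argument.)
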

\begin{proof}
   Consider the function 
  \[ f(x, \xi, t) = \log |\xi|^2_{g_t} - N_1t - N_2\phi_t, \]  
  where $x\in M$ and $\xi\in T_xM$ has unit length with respect to
  $\alpha$, and $N_1, N_2$ are large constants to be chosen later. Given $T >
  0$, suppose that $f$ achieves its maximum on $[0,T]$, at $(x,\xi,
  t_0)$, with $t_0 > 0$. Choose normal coordinates at $x$ for $\alpha$
  such that $g_{t_0}$ is diagonal and $\xi = \d / \d z^1$. If we define
  \[ h(z,t) = \log\frac{g_{1\bar 1}}{\alpha_{1\bar 1}} - N_1t - N_2\phi_t, \]
  then at the origin $z=0$ and $t=t_0$ we must have
  \[ \begin{aligned}
    0 &\leq \d_t h(z,t) - \sum_p -\d_{pp}F(A_t) \d_p\d_{\bar p} h(z,t)
    \\
&= -g^{1\bar 1} F(A_t)_{1\bar 1} -N_1 - \sum_p -\d_{pp}F(A_t) \d_p\d_{\bar
  p} [\log g_{1\bar 1} - \log \alpha_{1\bar 1} - N_2\phi_t] \\
&\leq -N_1 + C\sum_p -\d_{pp}F(A_t) + N_2\sum_p -\d_{pp}F(A_t)g_{t,
  p\bar p} - N_2\sum_p -\d_{pp}F(A_t) g_{0, p\bar p}.
\end{aligned}\]
Choosing $N_2$ sufficiently large, we will have
\[ 0\leq -N_1 + N_2 \sum_p -\d_{pp} F(A_t) g_{t,p\bar p} < -N_1 + CN_2
F(A_t), \]
using property (3) of $F$. Since $F(A_t) < K$, if $N_1$ is chosen
sufficiently large, we will have a contradiction. It follows that then 
$f(x,\xi, t)$ achieves its maximum at $t=0$, from which the result
follows by the bound we already have for $\phi_t$. 
\end{proof}

Using the convexity of $F$ we can apply the Evans-Krylov theorem to 
obtain $C^{2,\alpha}$-estimates for
$\phi_t$ as long as the flow exists, and higher order estimates follow
from standard Schauder estimates. It follows that the flow exists for
all time. 

\begin{prop}\label{prop:longtime}
  There is a solution of Equation~\eqref{eq:parabolic} for all $t >
  0$. 
\end{prop}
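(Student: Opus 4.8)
The plan is a standard continuity argument: short-time existence together with uniform-on-compact-time-intervals a priori estimates forces the flow to exist for all time. Short-time existence of \eqref{eq:parabolic} follows from the fact that, by the structural assumptions on $F$, the equation is uniformly parabolic in a neighborhood of any metric with $A\in\mathcal{M}$; the standard inverse function theorem on parabolic H\"older spaces then produces a solution on some interval $[0,\varepsilon)$. Let $[0,T_{\max})$ be the maximal interval on which a smooth solution with $A_t\in\mathcal{M}$ exists. It suffices to show that if $T_{\max}<\infty$ then $\phi_t$ converges smoothly as $t\to T_{\max}$ to a K\"ahler potential $\phi_{T_{\max}}$ with $A_{T_{\max}}\in\mathcal{M}$, since restarting the flow from this datum contradicts maximality.

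Fix $T<T_{\max}$. First note that $A_t$ remains in $\mathcal{M}$: the set of $t$ with $A_t\in\mathcal{M}$ is relatively open, and on any interval $[0,t_1)$ on which it holds Lemma~\ref{lem:FAt} gives $F(A_t)\le F(A_0)<K$; since $\mathcal{M}$ is by definition a connected component of the open set $\{F<K\}$, the equality $\overline{\mathcal{M}}\cap\{F<K\}=\mathcal{M}$ shows the condition is also relatively closed once we have the estimates below. The combination of $F(A_t)\le F(A_0)$ with the upper bound $\Lambda_\alpha\omega_t<C(t+1)$ from the preceding Proposition shows that the eigenvalues of $A_t$ are bounded above and away from zero uniformly on $[0,T]$ (for the lower bound one uses that $F$ blows up as an eigenvalue degenerates, which follows from property (1) together with the structure of $F$, e.g.\ property (5)); hence \eqref{eq:parabolic} is uniformly parabolic on $[0,T]$ with uniformly bounded coefficients. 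Together with the $C^0$ bound $|\phi_t|<C(t+1)$ from Lemma~\ref{lem:FAt} and the $C^2$ bound just established, we obtain uniform $C^2$ estimates for $\phi_t$ on $[0,T]$, with constants depending only on $T_{\max}$, $\alpha$, $\omega_0$ and $F$.

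Since $F$ is convex on $\mathcal{M}$ by structural condition (2), the parabolic Evans--Krylov theorem then yields a uniform interior $C^{2,\alpha}$ estimate for $\phi_t$ on $[0,T]$, and standard parabolic Schauder estimates bootstrap this to uniform $C^\infty_{\mathrm{loc}}$ bounds on all of $[0,T]$. As these bounds are independent of $T<T_{\max}$, the potentials $\phi_t$ converge in $C^\infty$ as $t\to T_{\max}$ to a smooth limit $\phi_{T_{\max}}$, which by the eigenvalue bounds above is a K\"ahler potential with $F(A_{T_{\max}})\le F(A_0)<K$ and $A_{T_{\max}}\in\mathcal{M}$. Applying short-time existence with initial datum $\phi_{T_{\max}}$ extends the flow beyond $T_{\max}$, contradicting maximality; therefore $T_{\max}=\infty$. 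The only delicate point is the uniform ellipticity on compact time intervals, i.e.\ ruling out degeneration of the smallest eigenvalue of $A_t$; this is where the structural conditions on $F$ (rather than mere parabolicity) are needed, and one should keep in mind that all constants here are allowed to grow with $t$, so this argument gives long-time existence but not uniform-in-time control.
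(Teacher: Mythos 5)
Your proof takes the same route as the paper: the $C^0$ bound from Lemma~\ref{lem:FAt}, the time-dependent $C^2$ bound from the preceding proposition, Evans--Krylov via the convexity (structural condition (2)), parabolic Schauder bootstrap, and a maximal-time argument; the paper states this very tersely, but the content matches, and your write-up is if anything more careful about the lower eigenvalue bound on $A_t$ needed for uniform parabolicity. One small imprecision: structural property (5) governs the behavior of $f$ as eigenvalues tend to \emph{infinity} (i.e.\ $x_i=\lambda_i^{-1}\to 0$), not as they degenerate to zero, so it is not the right citation for the claim that $F$ blows up as an eigenvalue vanishes; the lower eigenvalue bound instead comes from $F(A_t)\leq F(A_0)<K$ together with the specific form of the operators $F$ used in the paper (e.g.\ $F=\sum_k c_k S_k(A^{-1})$, or $F_\epsilon$ via Lemma~\ref{lem:QK}, where the relevant component $\mathcal{M}$ consists of matrices with eigenvalues bounded away from zero), possibly combined with the upper eigenvalue bound from the $C^2$ estimate.
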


We will make use of this long time existence result in
Section~\ref{sec:thm1}. While one could pursue the existence of solutions to
the equation $F(A) = c$ by studying the convergence of the flow, we
will instead primarily use the continuity method. For this we need elliptic
$C^2$-estimates, assuming the existence of a subsolution of the
equation in a certain sense. We will also need to use the case when
$M$ is a manifold with boundary. 

Suppose therefore that $(M,\d M)$ is
a compact K\"ahler manifold with possibly empty boundary, and suppose that
$\omega, \alpha$ are metrics on $M$ satisfying 
\[ F(A) = c \]
for a constant $c$. For simplicity we will assume that in the
structural conditions for $F$ we can take $K=\infty$. 

To define the notion of subsolution that we use, define the function
\[ \widetilde{f}(\lambda_1,\ldots, \lambda_{n-1}) =
\lim_{\lambda_n\to\infty} f(\lambda_1, \ldots, \lambda_n) =
g(\lambda_1^{-1},\ldots, \lambda_{n-1}^{-1}, 0),\]
in terms of the function $g$ in property (5). 
For a Hermitian matrix $B$ we will write
\[ \widetilde{F}(B) = \max \widetilde{f}(\lambda_1,\ldots,
\lambda_{n-1}), \]
where the max runs over all $n-1$-tuples of eigenvalues of $B$. 

\begin{remark}
  Trudinger~\cite{Tru95} studied the Dirichlet problem (over the
  reals) for equations of the eigenvalues of the Hessian satisfying
  certain structural conditions, which for example allow for
  treating the equation
  \begin{equation}\label{eq:Treq} S_1((D^2u)^{-1})= c
  \end{equation}
  in open domains $\Omega\subset\mathbf{R}^n$. Writing
  the equation as $f(\lambda_1,\ldots, \lambda_n) = c$ in terms of the
  eigenvalues of $D^2u$, a key role in the estimates is played by the
  function 
  \[ f_{\infty}(\lambda_1,\ldots, \lambda_{n-1}) = \lim_{\lambda_n
    \to\infty} f(\lambda_1,\ldots, \lambda_n), \]
  which is the same as our function $\widetilde{f}$ above (our
  equation is the reciprocal of that studied by Trudinger).  In our
  situation the function $\widetilde{f}$ is not only relevant in
  deriving estimates, but it is also used to define the notion of
  subsolution. We also remark that for the equation to fit into the
  framework of Caffarelli-Nirenberg-Spruck~\cite{CNS3}, one would
  need $f_{\infty} = 0$. 
\end{remark}

For
technical reasons we will need a notion of viscosity subsolution which
we give now. 

\begin{defn}\label{defn:viscosity}
  Suppose that $\chi$ is a K\"ahler current with continuous local
  potential, i.e. in local charts $U$ we can write $\chi = \ddbar f$
  with $f\in C^0(U)$. We say that $\chi$ satisfies 
  $\widetilde{F}(\alpha^{i\bar p}\chi_{j\bar p}) \leq c$ in the
    viscosity sense, if the following holds: suppose that $p\in M$, and $h:U\to
    \mathbf{R}$ is a $C^2$
    function on a neighborhood $U$ of $p$, where $\chi = \ddbar f$. If
    $h - f$ has a local minimum at $p$, then 
 $ \widetilde{F}(\alpha^{i\bar p}\d_j\d_{\bar p}h) \leq c$ at $p$.  
\end{defn}

It is clear from the monotonicity of $F$ (and therefore
$\widetilde{F}$), i.e. structural condition (1), that if $\chi$ is a
smooth metric satisfying $\widetilde{F}(\alpha^{i\bar p}\chi_{j\bar
  p})\leq c$, then this inequality is also satisfied in the viscosity
sense. We will need the following, which is a special case of the
general fact that a maximum of a family of viscosity subsolutions is a
viscosity subsolution (see Caffarelli-Cabr\'e~\cite{CC95}). 

\begin{lemma}\label{lem:maxviscosity}
  Suppose that in an open set $U$ we have smooth metrics
  $\chi_k = \ddbar f_k$ for $k=1,\ldots, N$, satisfying $\widetilde{F}(\alpha^{i\bar
    p}\chi_{k,j\bar p}) \leq c$. Then $\chi = \ddbar\max\{f_k\}$
  satisfies $\widetilde{F}(\alpha^{i\bar p}\chi_{j\bar p})\leq c$ in
  the viscosity sense.  
\end{lemma}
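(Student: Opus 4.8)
The plan is to verify the viscosity subsolution condition of Definition~\ref{defn:viscosity} directly for $\chi = \ddbar\max\{f_k\}$, reducing everything to the corresponding pointwise inequality for the individual smooth metrics $\chi_k$. Write $f = \max\{f_1,\ldots,f_N\}$, which is continuous on $U$ as a finite maximum of smooth functions, so $\chi$ is indeed a K\"ahler current with continuous local potential. Suppose $h\colon V\to\mathbf{R}$ is $C^2$ on a neighborhood $V$ of a point $p\in U$ and $h - f$ has a local minimum at $p$. We must show $\widetilde{F}(\alpha^{i\bar q}\d_j\d_{\bar q}h)\le c$ at $p$.

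First I would localize to the index realizing the maximum: since $f(p) = f_{k_0}(p)$ for some $k_0$, and $f\ge f_{k_0}$ everywhere, the function $h - f_{k_0}$ satisfies $h - f_{k_0} \ge h - f$ with equality at $p$. Hence $h - f_{k_0}$ also has a local minimum at $p$. Now $f_{k_0}$ is smooth, so the local minimum condition gives, at $p$, that $\d_j\d_{\bar q}h \ge \d_j\d_{\bar q}f_{k_0}$ as Hermitian matrices (the first-order terms vanish and the Hessian of $h - f_{k_0}$ is positive semidefinite at an interior minimum). Equivalently $\alpha^{i\bar q}\d_j\d_{\bar q}h \ge \alpha^{i\bar q}\d_j\d_{\bar q}f_{k_0} = \alpha^{i\bar q}\chi_{k_0,j\bar q}$ in the sense of Hermitian matrices with respect to the inner product defined by $\alpha$.

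The last step is to feed this matrix inequality into the monotonicity of $\widetilde{F}$. By structural condition~(1), $F$ is monotone increasing in each eigenvalue on $\mathcal{M}$, hence $f$ is monotone in each variable, hence $\widetilde f(\lambda_1,\ldots,\lambda_{n-1}) = g(\lambda_1^{-1},\ldots,\lambda_{n-1}^{-1},0)$ is monotone increasing in each $\lambda_i$ as well (a limit of monotone functions). It is a standard fact about symmetric monotone functions of eigenvalues that if two Hermitian matrices satisfy $B' \ge B$, then $\widetilde F(B')\ge\widetilde F(B)$ — one can see this, for instance, by Weyl's monotonicity theorem comparing eigenvalues, together with the definition of $\widetilde F$ as the max over $(n-1)$-tuples of eigenvalues. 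Applying this with $B' = \alpha^{i\bar q}\d_j\d_{\bar q}h$ and $B = \alpha^{i\bar q}\chi_{k_0,j\bar q}$ at $p$, and then using that $\chi_{k_0}$ is a smooth metric satisfying $\widetilde F(\alpha^{i\bar q}\chi_{k_0,j\bar q})\le c$, we conclude $\widetilde F(\alpha^{i\bar q}\d_j\d_{\bar q}h)(p)\le\widetilde F(\alpha^{i\bar q}\chi_{k_0,j\bar q})(p)\le c$, as required.

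The only genuinely delicate point is the monotonicity statement $B'\ge B \implies \widetilde F(B')\ge\widetilde F(B)$; once this is in hand the argument is essentially the classical observation that a finite max of subsolutions is a subsolution, specialized to our setting. I would either cite Caffarelli-Cabr\'e~\cite{CC95} for the general principle, or give the two-line eigenvalue-comparison argument above, since in our concrete situation $\widetilde F$ is explicitly a symmetric monotone function of $n-1$ eigenvalues and the comparison is transparent.
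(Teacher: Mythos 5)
Your argument is the same as the paper's: localize the maximum to an index $k_0$ realizing $\max_k f_k(p)$, observe that $h - f_{k_0}$ still has a local minimum at $p$, deduce the complex Hessian inequality $\alpha^{i\bar q}\partial_j\partial_{\bar q}h \ge \alpha^{i\bar q}\chi_{k_0,j\bar q}$, and invoke monotonicity of $\widetilde F$ together with $\widetilde F(\alpha^{i\bar q}\chi_{k_0,j\bar q})\le c$. One slip, though: you write that $F$ is ``monotone increasing'' and state $B'\ge B \implies \widetilde F(B')\ge\widetilde F(B)$, but structural condition (1) says $\partial_{ii}F(A)<0$, so $F$ (and therefore $\widetilde F$) is monotone \emph{decreasing}; the correct implication is $B'\ge B \implies \widetilde F(B')\le\widetilde F(B)$. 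That is precisely what your final chain of inequalities uses, so the conclusion is right, but the stated monotonicity direction in the intermediate paragraph contradicts it and should be corrected.
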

\begin{proof}
  Fix a point $p\in U$ and suppose that $h$ is a smooth function such
  that $h - \max\{f_k\}$ has a local minimum at $p$. Without loss of
  generality we can assume that $\max\{f_k(p)\} = f_1(p)$, and then
  $h-f_1$ also has a local minimum at $p$. By assumption
  $\widetilde{F}(\alpha^{i\bar p} \d_j\d_{\bar p} f_1)(p)\leq c$, and
  so the monotonicity of $\widetilde{F}$ implies that
  $\widetilde{F}(\alpha^{i\bar p}\d_j\d_{\bar p} h)(p)\leq c$. 
\end{proof}

\begin{prop} \label{prop:BenC2}
    Suppose that $\omega = \omega_0 + \ddbar\phi$ is smooth, and satisfies $F(A) =
    c$, where $A^i_j = \alpha^{i\bar p}\omega_{j\bar p}$, and $F$
    satisfies the structural conditions with $K=\infty$.  
  Suppose that we have a strict viscosity subsolution $\chi = \omega_0
  + \ddbar\psi$, i.e. that 
  $\widetilde{F}(\alpha^{i\bar p}\chi_{j\bar p})\leq c-\delta$ in the viscosity sense, for some
  $\delta > 0$. Here $\psi\in C^0(M)$. 
Then we have an estimate
\[ \Lambda_\alpha\omega < Ce^{N(\phi - \inf \phi)}, \]
where the constants $C,N$ depend on the given data, including $\chi, \delta$, as well as
the maximum of $\Lambda_\alpha\omega$ on $\d M$.
\end{prop}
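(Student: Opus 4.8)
The plan is to run a maximum-principle argument on the quantity
\[
 f(x,\xi) = \log|\xi|^2_{\omega} - N(\phi - \psi) + A(\psi - \inf\psi),
\]
or more precisely its restriction to a maximizing unit-length (with respect to $\alpha$) direction $\xi$, following the structure of the Song--Weinkove $C^2$-estimate but replacing the background metric by the subsolution $\chi$. Since $\psi$ is only continuous, the first step is a regularization: I would approximate $\psi$ by smooth functions, or alternatively observe that at an interior maximum of the auxiliary function one may use the viscosity property directly --- comparing $\phi$ against the test function built from $\psi$ plus the logarithmic term. Concretely, if the max of $f$ over $M\times\{|\xi|_\alpha=1\}$ is attained at an interior point $p$ in a direction we may take to be $\partial/\partial z^1$ in $\alpha$-normal coordinates diagonalizing $\omega$, then $h(z) = \log(g_{1\bar1}/\alpha_{1\bar1}) - N(\phi-\psi) + A(\psi-\inf\psi)$ has a local max at $p$, so $\partial_p\partial_{\bar p}h \le 0$ and $\partial_p h = 0$ there.

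The main computation applies the linearized operator $L = \sum_p(-\partial_{pp}F(A))\,\partial_p\partial_{\bar p}$ to $h$ at $p$. For the $\log g_{1\bar1}$ term I would invoke Lemma~\ref{lem:Sddbar}, which gives $\partial_1\partial_{\bar1}F(A) \ge -\sum_p(-\partial_{pp}F(A))\,g_{1\bar1}\partial_p\partial_{\bar p}\log g_{1\bar1} - CF(A)$; combined with differentiating $F(A)=c$ (so $L(\log g_{1\bar1})$ picks up $g^{1\bar1}\partial_1\partial_{\bar1}F(A)$ plus controllable curvature terms) this controls the dangerous third-order and curvature contributions, leaving $L(\log g_{1\bar1}) \ge -C\sum_p(-\partial_{pp}F(A))$. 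For the $-N\phi$ term, $L(-N\phi) = -N\sum_p(-\partial_{pp}F(A))(g_{p\bar p} - g_{0,p\bar p})$, which is the standard source of a good positive term $N\sum_p(-\partial_{pp}F(A))g_{p\bar p}$. The new ingredient is the subsolution: the terms involving $\psi$ must produce, via the definition of $\widetilde F$ and the strict inequality $\widetilde F \le c-\delta$ in the viscosity sense, a negative term strong enough to dominate. The key algebraic fact (as in Fang--Lai--Ma, using structural conditions (3) and (4)) is that when the largest eigenvalue $\lambda_1$ of $A$ is very large, $F(A)$ is close to $\widetilde f$ evaluated at the remaining eigenvalues, and the subsolution inequality then forces $\sum_{p\ge2}(-\partial_{pp}F(A))\chi_{p\bar p}/\alpha$-type quantities to beat $c - \delta/2$, yielding after rearrangement a term like $-\tfrac{\delta}{2}\sum_p(-\partial_{pp}F(A))\lambda_p \le -C^{-1}\delta F(A)$ by (3).

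Putting the inequalities together at $p$:
\[
 0 \ge L h \ge N\sum_p(-\partial_{pp}F(A))g_{p\bar p} - C\sum_p(-\partial_{pp}F(A)) - N C' - (\text{subsolution term}),
\]
and the subsolution term, precisely because the inequality is strict, subtracts a definite fraction of $\sum_p(-\partial_{pp}F(A))g_{p\bar p}$ once $\lambda_1$ is large, so the remaining coefficient of $\sum_p(-\partial_{pp}F(A))g_{p\bar p}$ can be taken positive. One must separately dispose of the case where $\lambda_1$ is bounded (then the estimate is trivial) and the case where the max is on $\partial M$ (absorbed into the constant by hypothesis). From $g_{1\bar1} = \lambda_1\alpha_{1\bar1}$ being the largest eigenvalue and $\Lambda_\alpha\omega \le n\lambda_1$, the bound $\log g_{1\bar1} \le C + N(\phi - \inf\phi)$ at the maximum translates to the asserted $\Lambda_\alpha\omega < Ce^{N(\phi-\inf\phi)}$. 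I expect the main obstacle to be making the subsolution mechanism rigorous at the level of the merely continuous potential $\psi$: one needs either a careful regularization (perhaps the Blocki--Kolodziej smoothing alluded to later in the paper, or a sup-convolution preserving the viscosity subsolution property) or a direct viscosity argument comparing $\phi$ with a $C^2$ test function, and then verifying that the discretization/limit does not destroy the strict inequality margin $\delta$ that the whole estimate relies on.
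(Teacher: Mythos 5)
Your proposal follows essentially the same approach as the paper, and you correctly identify both the auxiliary function and the two places where the real work lies, but both of those places are left as sketches rather than arguments.

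On the viscosity issue: you list two options (regularize $\psi$, or use a direct viscosity comparison) without committing to either. The paper's resolution is the direct route, and the exact mechanism is worth recording because it is not quite any of the standard moves: at the interior maximum of $f(x,\xi)=\log|\xi|^2_g - N\phi + N\psi$, one sets $\psi_1 = \phi - N^{-1}\log(g_{1\bar1}/\alpha_{1\bar1})$. Since $h=\log(g_{1\bar1}/\alpha_{1\bar1})-N\phi+N\psi$ has a local max there, $\psi_1-\psi$ has a local \emph{minimum}, so $\psi_1$ is an admissible test function and the viscosity definition yields $\widetilde F(\alpha^{i\bar p}\chi'_{j\bar p})\le c-\delta$ for the smooth form $\chi'=\omega_0+\ddbar\psi_1$. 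Crucially, the replacement $\tilde h = \log(g_{1\bar1}/\alpha_{1\bar1})-N\phi+N\psi_1$ is \emph{identically zero} by construction, so one applies $L$ to $\tilde h\equiv 0$ and never has to differentiate anything non-smooth. A smoothing approach (Blocki--Kolodziej is used elsewhere in the paper, not here) would introduce error terms precisely where you need the strict margin $\delta$, which is why the paper avoids it.

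On the eigenvalue analysis: your sketch that ``when the largest eigenvalue is large, $F(A)$ is close to $\widetilde f$ and the subsolution inequality forces a contradiction'' is the right heuristic, but the paper's actual argument is more delicate and leans essentially on structural condition~(5), which you do not invoke. The point is that after rearranging one arrives at the three inequalities in~\eqref{eq:3}, and one must combine (a) uniform continuity of $g(x)=f(x^{-1})$ on compact subsets of the orthant to pass from $f,f_p$ to $\tilde f,\tilde f_p$ with error $\tau$, (b) the fact that $\lambda_p f_p\to 0$ uniformly as $\lambda_p\to\infty$ (again from (5)) to control the cross terms, and (c) convexity of $\tilde f$ to get $\tilde f(\mu)\ge \tilde f(\lambda)+\sum\tilde f_p(\mu_p-\lambda_p)$. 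The constants $K,K',\tau$ must then be chosen in a specific order relative to $\delta$ and $\epsilon$. Without condition~(5) this step does not close, so you should not present it as following from conditions (3) and (4) alone. Finally, note a small mislabeling: the paper orders $\lambda_1\le\cdots\le\lambda_n$, so the large eigenvalue in the dichotomy is $\lambda_n$, not $\lambda_1$; condition~(4) is about the \emph{smallest} eigenvalue.
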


\begin{proof}
  Consider the function
  \[ f(x,\xi) = \log |\xi|^2_g - N\phi(x) + N\psi(x), \]
  where $x\in M$ and $\xi\in T_xM$ has unit length with respect to
  $\alpha$, and $N$ is a large constant to be chosen later. If this
  function achieves its maximum on $\partial M$, then
  we will have
  \[ \Lambda_\alpha\omega < Ce^{N(\phi - \inf \phi)}, \]
  where $C$ depends on $\sup_{\partial M} \Lambda_\alpha\omega$ and
  the given data.

  Suppose that $f$ achieves its maximum at $(x,\xi)$, where $x\in M$
  is in the interior. We can choose normal coordinates at $x$ for $\alpha$
  such that $g$ is diagonal, and $\xi = \partial / \partial z^1$. This
  means that the function
  \[ h(z) = \log\frac{g_{1\bar 1}}{\alpha_{1\bar 1}} - N\phi + N\psi\]
  has a maximum at the origin. Define
  \[ \psi_1 = \phi - N^{-1}\log\frac{g_{1\bar 1}}{\alpha_{1\bar
      1}}. \]
  Then $\psi_1$ is smooth, and $\psi_1 - \psi$ has a local minimum at the origin. If we define
  $\chi' = \omega_0 + \ddbar \psi_1$, then the definition of viscosity
  subsolution means that $\widetilde{F}(\alpha^{i\bar p}\chi'_{j\bar
    p}) \leq c-\delta$. In addition the function 
  \[ \tilde{h}(z) = \log\frac{g_{1\bar 1}}{\alpha_{1\bar 1}} - N\phi +
  N\psi_1\]
  is identically zero.   It follows that at the origin
  \[ \begin{aligned}
           0 &= \sum_p -\d_{pp}F(A) \d_p\d_{\bar p} \widetilde{h} \\
&= \sum_p -\d_{pp}F(A) \d_p\d_{\bar p} \log g_{1\bar 1} - \sum_p
-\d_{pp}F(A) \d_p\d_{\bar p} \log \alpha_{1\bar 1} \\
&\quad - N\sum_p -\d_{pp}F(A) [g_{p\bar p} - \chi'_{p\bar p}] \\
& \geq - g^{1\bar 1}CF(A) - C\sum_p -\d_{pp}F(A) - N\sum_p
-\d_{pp}F(A)(g_{p\bar p} - \chi'_{p\bar p}),
\end{aligned} \]
where we used Lemma~\ref{lem:Sddbar}. We can assume without loss of
generality that $g_{1\bar 1} > 1$. By assumption (3) on $F$, we know
that $\sum_p -\d_{pp}F(A)g_{p\bar p}$ is bounded below. In addition
$\chi'_{p\bar p}$ also has a fixed lower bound by the assumption that
$\chi$ is a K\"ahler current. It follows that for any
$\epsilon > 0$ we can choose $N$ so large, that we obtain
\[ 0 \geq -N(1+\epsilon)\sum_p -\d_{pp}F(A) g_{p\bar p} +
N(1-\epsilon) \sum_p -\d_{pp}F(A)\chi'_{p\bar p}.\]
Rearranging this and changing $\epsilon$ slightly, for sufficiently
large $N$ we will have
\[ \sum_p -\d_{pp}F(A)\chi'_{p\bar p} \leq (1+\epsilon)\sum_p
-\d_{pp}F(A) g_{p\bar p}. \]

Let us now change notation slightly. Write $\lambda_i$ for the
eigenvalues of $g$, and $\mu_i$ for the eigenvalues of $\chi'$. In
addition suppose that $\lambda_1\leq\lambda_2\leq\ldots
\leq\lambda_n$. 
 For simplicity of notation let us also suppose that $c=1$. We have
the following.

\begin{equation}\label{eq:3} \begin{gathered}
f(\lambda_1,\ldots, \lambda_n) = 1, \\
\tilde{f}(\mu_1,\ldots, \mu_{n-1}) \leq 1-\delta,\\
\sum_p -f_p(\lambda_1,\ldots,\lambda_n)\mu_p \leq (1+\epsilon)\sum_p
-f_p(\lambda_1,\ldots, \lambda_n)\lambda_p.
\end{gathered}
\end{equation}

Using assumption (3) for $F$ and the lower bound for the $\mu_i$, we
obtain an upper bound for $-f_1(\lambda_i)$. Together with assumption
(4) for $F$, this implies a lower bound for the lowest eigenvalue
$\lambda_1$. 

By assumption (5), there is a number $K$
such that if $\lambda_n > K$, then 
\[ \begin{gathered} 
  \tilde{f}(\lambda_1,\ldots, \lambda_{n-1}) > 1-\tau, \\
  \tilde{f}_p(\lambda_1,\ldots, \lambda_{n-1}) < f_p(\lambda_1,\ldots,
  \lambda_n) + \tau,
\end{gathered}
\]
since $f_p(\lambda_i) = -\lambda_p^{-2}g_p(\lambda_i^{-1})$, and the function
$x_p^2g(x_i)$ is uniformly continuous on compact subsets of the
orthant $\{ x_i \geq 0\}$. Note that by convexity, $\tilde{f}_p \geq f_p$. 
The convexity of the map $\lambda_i \to \tilde{f}(\lambda_i)$ implies
that if we denote
\[ r(t) = \tilde{f}(\lambda_i + t(\mu_i-\lambda_i)), \]
then $ r(1) \geq r(0) + r'(0)$. 
This means
\[ \tilde{f}(\mu_i) \geq \tilde{f}(\lambda_i) + \sum_{p=1}^{n-1}
(\mu_p - \lambda_p)\tilde{f}_p(\lambda_i).\]
From this we get
\[ \delta -\tau\leq \sum_{p=1}^{n-1} -\tilde{f}_p (\mu_p -\lambda_p)
\leq \sum_{p=1}^n -f_p(\lambda_i) \mu_p - \sum_{p=1}^{n-1}
-\tilde{f}_p(\lambda_i) \lambda_p . \]
From assumption (5) we also have that $\lambda_p f_p \to 0$ uniformly as
$\lambda_p \to\infty$, since $\lambda_p f_p(\lambda_i) =
-\lambda_p^{-1}g_p(\lambda_i^{-1})$, and $x_p g(x_i)$ is uniformly
continuous on compact subsets of $\{x_i \geq 0\}$. So we can choose a
constant $K'$ such that $-f_p\lambda_p < \tau$ if $\lambda_p > K'$. In addition we
take $K$ above so that $K > K'$. We then have
\[ \begin{aligned}
  \delta - \tau &\leq \sum_{p=1}^n -f_p\mu_p - \sum_{p=1}^n
  -f_p\lambda_p + n\tau + \sum_{p=1}^{n-1} (\tilde{f}_p - f_p)K' \\
  &\leq \epsilon\sum_{p=1}^n -f_p \lambda_p + n\tau + (n-1)\tau K' \\
  &\leq C\epsilon F(A) + n\tau + (n-1)\tau K'. 
\end{aligned}\]
We can choose $\tau$ so small (i.e. $K$ above so large), that
\[ (n+1)\tau + (n-1)\tau K' < \frac{\delta}{2}. \]
We will then have
\[ \frac{\delta}{2} \leq C\epsilon F(A). \]
If now $\epsilon$ is sufficiently small (i.e. the constant $N$ above
is chosen sufficiently large), then this will be a contradiction.

It follows that if the constant $N$ before is
chosen sufficiently large, then at the maximum of our function $f$ we
have a bound $g_{1\bar 1} < K$ for some large $K$. From this it follows that we
have an inequality of the form
\[ \Lambda_\alpha g < C e^{N(\phi - \inf\phi)}, \]
which is what we wanted to prove. 
\end{proof}

We now prove Theorem~\ref{thm:1} under the more general assumption
that $\chi$ satisfies the positivity condition \eqref{eq:BJpositivity}
in the viscosity sense. For this let us write  
\begin{equation}\label{eq:Fdefn}
    F(A) = \sum_{k=1}^n c_k S_k(A^{-1}),
\end{equation}
for constants $c_k\geq 0$. Note that if $\chi$ is smooth and $B^i_j =
\alpha^{i\bar p}\chi_{j\bar p}$ then $\widetilde{F}(B) \leq c-\delta$
is equivalent to the positivity of $(n-1,n-1)$ forms
\begin{equation}\label{eq:BenJian2}
    c\chi^{n-1} - \sum_{k=1}^{n-1} c_k\binom{n-1}{k}\chi^{n-k-1}\wedge
    \alpha^k > 0.
\end{equation}
In particular, Theorem~\ref{thm:1} follows from the following.

\begin{thm} \label{thm:1visc} 
  Suppose that we have a K\"ahler current $\chi\in \Omega$
  satisfying $\widetilde{F}(\alpha^{i\bar p}\chi_{j\bar p})\leq c-\delta$ in the
  viscosity sense, for some $\delta > 0$. Suppose that the constant
  $c$ satisfies 
  \[ \sum_{k=1}^n c_k\binom{n}{k}\int_M \alpha^k\wedge \omega_0^{n-k} =
  c\int_M \omega_0^n. \]

  Then there is an
  $\omega\in\Omega$ satisfying the equation $F(\alpha^{i\bar
    p}\omega_{j\bar p}) = c$, i.e. 
  \begin{equation}\label{eq:Jeq2} \sum_{k=1}^n c_k\binom{n}{k} \alpha^k\wedge\omega^{n-k} =
  c\omega^n. \end{equation}
\end{thm}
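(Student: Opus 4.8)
The plan is to solve Equation~\eqref{eq:Jeq2} by the continuity method. I would introduce the family of equations
\[ F(\alpha^{i\bar p}\omega_{t,j\bar p}) = (1-t)F(\alpha^{i\bar p}\omega_{0,j\bar p}) + tc + \underline{c}_t, \]
where $\omega_t = \omega_0 + \ddbar\phi_t$ is normalized so that $\sup_M\phi_t = 0$, and $\underline{c}_t$ is the constant forced by integrating the equation over $M$ (so that $\underline{c}_0 = 0$); at $t=0$ the solution is $\omega_0$. The set of $t\in[0,1]$ for which a smooth solution exists is nonempty, and openness follows from the implicit function theorem once we know the linearized operator $v\mapsto -\d_{ij}F(A)\alpha^{i\bar k}v_{j\bar k}$ is invertible: it is elliptic by structural condition (1) (all $\d_{ii}F<0$), so this is standard. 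The main work is closedness, which reduces — via the Evans--Krylov theorem (applicable by the convexity in structural condition (2)) and Schauder estimates — to a priori $C^0$ and $C^2$ bounds uniform in $t$.

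The $C^2$ estimate is exactly Proposition~\ref{prop:BenC2} applied on the closed manifold $M$ (empty boundary): since $\chi$ is a strict viscosity subsolution for $F(A)=c$, and the right-hand sides $(1-t)F(A_0)+tc+\underline{c}_t$ converge to $c$ and are bounded, $\chi$ remains a strict viscosity subsolution with a uniform $\delta/2$ for all $t$ close to where solutions might fail to exist — so Proposition~\ref{prop:BenC2} gives $\Lambda_\alpha\omega_t < Ce^{N(\phi_t - \inf\phi_t)}$. Thus the $C^2$ bound is conditional on a $C^0$ bound, and moreover conditional on control of $\inf_M\phi_t$; with $\sup_M\phi_t = 0$ fixed, what remains is an estimate $\inf_M \phi_t \geq -C$. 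For this I would use the standard ABP/Alexandrov-type argument (as in Błocki or Weinkove): at a point where $\phi_t$ is very negative one produces, from the concavity of $\log$ applied to $S_k(A^{-1})$ terms or directly from the equation, a bound forcing $\phi_t$ to be bounded below in terms of $\int_M(-\phi_t)\omega_0^n$, which is in turn controlled once one knows $\Lambda_\alpha\omega_t$ is not too large on the bulk; combined with the $C^2$ estimate one closes the loop. Alternatively, since $\underline{c}_t$ is uniformly bounded and $F(A_t)>0$, a Moser iteration on the equation $F(A_t) = $ (bounded) together with $\omega_t > 0$ gives the $C^0$ bound by the argument of Weinkove for the $J$-flow.

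The hard part will be making the $C^0$ estimate genuinely uniform in $t$ and compatible with the subsolution hypothesis being only viscosity rather than classical — but since $\chi$ enters only through Proposition~\ref{prop:BenC2}, which is already proved for viscosity subsolutions, this difficulty is absorbed, and the $C^0$ estimate itself only uses $\omega_t>0$ and the uniform bound on the right-hand side, so no regularity of $\chi$ is needed there. Once $\|\phi_t\|_{C^0}$ and hence $\|\phi_t\|_{C^2}$ are bounded uniformly, Evans--Krylov gives a uniform $C^{2,\alpha}$ bound, Schauder bootstrapping gives uniform $C^\infty$ bounds, and closedness follows; the continuity method then produces a solution at $t=1$, which is Equation~\eqref{eq:Jeq2}. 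Finally, the equivalence of $\widetilde F(\alpha^{i\bar p}\chi_{j\bar p})\le c-\delta$ with the positivity \eqref{eq:BenJian2}, noted in the text, shows that Theorem~\ref{thm:1} — in particular its "if" direction — follows, while the "only if" direction is immediate by taking $\chi = \omega$ itself and using structural condition (1) to see that a solution is automatically a (non-strict, hence after a small perturbation strict) subsolution.
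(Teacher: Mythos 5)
Your overall template --- continuity method, a priori $C^2$ estimate from Proposition~\ref{prop:BenC2}, closing the $C^0/C^2$ loop by Weinkove's lemma, then Evans--Krylov and Schauder --- matches the paper's. But the continuity \emph{path} you chose has a genuine gap, and this is precisely where the paper's choice is the real content of the proof.

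You deform the right-hand side: $F(A_t) = (1-t)F(A_0) + tc + \underline{c}_t$. First, $\underline{c}_t$ is not determined a priori by $t$: integrating against $\omega_t^n$ gives $\underline{c}_t \int_M \omega_0^n = (1-t)\big[c\int_M\omega_0^n - \int_M F(A_0)\,\omega_t^n\big]$, and $\int_M F(A_0)\,\omega_t^n$ depends on the unknown solution $\omega_t$ itself, so the path requires treating $\underline{c}_t$ as an additional unknown; this is fixable but needs to be said. Second, and more seriously, the subsolution property can be \emph{lost} along your path. The hypothesis gives $\widetilde{F}(\alpha^{i\bar p}\chi_{j\bar p}) \le c - \delta$; to invoke Proposition~\ref{prop:BenC2} at parameter $t$ you need $\widetilde{F}(\alpha^{i\bar p}\chi_{j\bar p}) \le (1-t)F(A_0)(x) + tc + \underline{c}_t - \delta'$ for a uniform $\delta' > 0$ and all $x$. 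At points where $F(A_0)(x) < c$ this requires $(1-t)(c - F(A_0)(x)) \le \underline{c}_t + (\delta - \delta')$, and neither the size nor even the sign of $\underline{c}_t$ is under control. Your argument that ``the right-hand sides converge to $c$'' and so the subsolution survives ``for all $t$ close to where solutions might fail to exist'' is circular: if the solution exists only on $[0,t^*)$ with $t^* < 1$, the right-hand side converges to $(1-t^*)F(A_0) + t^*c + \underline{c}_{t^*}$, not to $c$, and $\chi$ need not be a subsolution with any uniform margin there. (One would also have to extend Proposition~\ref{prop:BenC2} to a non-constant right-hand side; this is probably routine, but it's an additional burden your path creates.)

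The paper instead solves $F_d(A) := F(A) + d\,\alpha^n/\omega^n = c_d$ for $d\in[0,\infty)$, starting from Yau's theorem at large $d$. The point --- and this is the key structural observation that makes the proof work --- is that the added term is $d\,S_n(A^{-1})$, which vanishes in the limit $\lambda_n\to\infty$ defining $\widetilde{F}$, so $\widetilde{F_d} = \widetilde{F}$ identically. Since also $c_d \geq c$ for all $d\geq 0$, the given $\chi$ with $\widetilde{F}(\alpha^{i\bar p}\chi_{j\bar p}) \le c - \delta \le c_d - \delta$ is a strict viscosity subsolution for \emph{every} equation in the family, with the same $\delta$. Your path does not share this invariance, and I don't see how to repair it without essentially adopting the paper's. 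The remainder of your outline (openness, Evans--Krylov via convexity, the closing of the $C^0$--$C^2$ loop via Weinkove's Moser-iteration lemma) is in line with the paper; just note that the $C^0$ bound is not ``independent of $\chi$'' --- it enters through the exponent $N$ in $\Lambda_\alpha\omega < Ce^{N(\phi - \inf\phi)}$, which in turn comes from the subsolution via Proposition~\ref{prop:BenC2}.
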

\begin{proof} 
  We will use the continuity method to solve the equation
  \begin{equation}\label{eq:cont}
    F_d(A) = F(A) + d \frac{\alpha^n}{\omega^n} = c_d
  \end{equation}
  for $d\in [0,\infty)$, where the constant $c_d$ is determined by
  $d$ by integrating the equation with respect to $\omega^n$ over
  $M$. In particular $c_d \geq c$.  
  According to Lemma~\ref{lem:Fd} below, $F_d(A)$ satisfies the structural
  conditions required by the $C^2$-estimates (with $K=\infty$ and
  $\mathcal{M}$ the space of all positive Hermitian matrices). In addition
  $\tilde{F}_d = \tilde{F}$, so $\chi$ is a strict viscosity
  subsolution for the equation $F_d(A) = c_d$, for all $d\geq 0$. We
  will therefore be able to  use Proposition~\ref{prop:BenC2} to
  obtain $C^2$-estimates.

  Let $I = \{ d\in [0,\infty)\,:\, \text{ \eqref{eq:cont}
    has a solution}\}$. By Yau's theorem~\cite{Yau78} we can solve the
  equation $\alpha^n / \omega^n = c_\infty$ for a suitable constant
  $c_\infty$. The implicit
  function theorem then implies that we can solve $\eqref{eq:cont}$
  for sufficiently large $d$, and that $I$ is open.

  To see that $I$ is closed, suppose that $\omega_k = \omega_0 +
  \ddbar\phi_k$ are solutions, with corresponding $d_k\to
  d$. Proposition~\ref{prop:BenC2} then implies that
  \[ \Lambda_\alpha\omega_k < Ce^{N(\phi_k - \inf \phi_k)}, \]
  for uniform $C, N$. Normalizing so that $\sup\phi_k =0$,
  it follows from Weinkove~\cite[Lemma 3.4]{W06} (see also
  \cite[Proposition 4.2]{Wei04}), that we have $\Lambda_\alpha\omega_k
  < C$ for a  uniform $C$. The equation then implies a lower bound $\omega_k >
  C^{-1}\alpha$ as well. Since $F_d$ is convex, the Evans-Krylov
  theorem~\cite{Ev82, Kry82} (see
  Tosatti-Wang-Weinkove-Yang~\cite{TWWY14} for a general version
  adapted to complex geometry),
  together with Schauder estimates can be used to obtain
  higher order estimates for the $\omega_k$, allowing us to pass to a
  limit as $k\to\infty$. This shows that $I$ is closed, and so $0\in
  I$, which was our goal.  
\end{proof}

\begin{lemma}\label{lem:Fd}
  The map $F(A)$ in Equation~\eqref{eq:Fdefn} satisfies the
  structural conditions at the beginning of Section~\ref{sec:C2}, on
  the whole space of positive Hermitian matrices. 
\end{lemma}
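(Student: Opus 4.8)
The plan is to verify the five structural conditions for $F(A) = \sum_{k=1}^n c_k S_k(A^{-1})$ one at a time, reducing everything to the computations already assembled in Section~\ref{sec:convex}. Since the $c_k$ are non-negative and we are working on the full cone of positive Hermitian matrices, most of the conditions follow by taking non-negative linear combinations of the corresponding statements for the individual operators $S_k(A^{-1})$.

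\begin{proof}
We verify the five conditions in turn, with $K=\infty$ and $\mathcal{M}$ the set of all positive Hermitian matrices. Throughout we work at a diagonal $A$ with eigenvalues $\lambda_i > 0$.

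\emph{Condition (1).} Clearly $F(A) > 0$ since each $S_k(A^{-1}) > 0$ and $c_k \geq 0$ (and at least one $c_k > 0$). By Lemma~\ref{lem:dij}, for each $k$ we have $\d_{ii} S_k(A^{-1}) = -S_{n-k;i}(A)/(\lambda_i S_n(A)) < 0$, since $S_{n-k;i}(A) > 0$ for $0\leq n-k \leq n-1$. Taking the non-negative combination gives $\d_{ii}F(A) < 0$.

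\emph{Condition (2).} This is exactly the content of Lemma~\ref{lem:convex}, which asserts that each $S_k(A^{-1})$ satisfies the inequality \eqref{eq:Fconvex}; since $c_k\geq 0$, the combination $F(A)$ satisfies \eqref{eq:Fconvex} as well. In particular $F$ is convex.

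\emph{Condition (3).} Using $\d_{kk}S_j(A^{-1}) = -S_{n-j;k}(A)/(\lambda_k S_n(A))$ we compute
\[ \sum_k -\lambda_k \d_{kk}S_j(A^{-1}) = \frac{1}{S_n(A)}\sum_k S_{n-j;k}(A) = \frac{(j+1)S_{n-j}(A)}{S_n(A)} = (j+1)\,S_j(A^{-1}), \]
using the identity $\sum_k S_{m;k}(A) = (n-m)S_m(A)$. Hence $\sum_k -\lambda_k\d_{kk}F(A) = \sum_j (j+1)c_j S_j(A^{-1})$, which lies between $F(A)$ and $(n+1)F(A)$; so condition (3) holds with $C = n+1$.

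\emph{Condition (4).} With $\lambda_1$ the smallest eigenvalue, $-\lambda_i\d_{ii}S_j(A^{-1}) = S_{n-j;i}(A)/S_n(A)$. It suffices to show $S_{n-j;1}(A)\geq c_0 S_{n-j;i}(A)$ for all $i$, with $c_0 > 0$ depending only on $n$: indeed $S_{n-j;i}(A) = S_{n-j;i,1}(A) + \lambda_1 S_{n-j-1;i,1}(A)$ while $S_{n-j;1}(A) = S_{n-j;1,i}(A) + \lambda_i S_{n-j-1;1,i}(A) \geq S_{n-j;1,i}(A) + \lambda_1 S_{n-j-1;1,i}(A) = S_{n-j;i}(A)$ using $\lambda_i\geq\lambda_1$. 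So in fact $-\lambda_1\d_{11}S_j(A^{-1})\geq -\lambda_i\d_{ii}S_j(A^{-1})$ for each $j$, and summing against $c_j\geq 0$ gives condition (4) with constant $1$.

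\emph{Condition (5).} We have $g(x_1,\ldots,x_n) = f(x_1^{-1},\ldots,x_n^{-1}) = \sum_k c_k S_k(x_1,\ldots,x_n)$, which is a polynomial in the $x_i$ and hence extends smoothly to the orthant $\{x_i\geq 0\}$.
\end{proof}

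The only step requiring any thought is condition (4), since it is the one structural hypothesis not already packaged as a lemma in Section~\ref{sec:convex}; the needed inequality between $S_{n-j;1}$ and $S_{n-j;i}$ is an elementary consequence of $\lambda_1$ being the minimal eigenvalue, and everything else is a direct appeal to Lemmas~\ref{lem:dij} and \ref{lem:convex} together with the non-negativity of the $c_k$.
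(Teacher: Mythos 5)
Your proof takes the same route as the paper's: verify each structural condition by reducing to Lemmas~\ref{lem:dij} and~\ref{lem:convex} and the non-negativity of the $c_k$, and the argument is essentially correct. There is one small arithmetic slip in condition (3): from the identity $\sum_k S_{m;k}(A) = (n-m)S_m(A)$ with $m = n-j$ one gets $\sum_k S_{n-j;k}(A) = j\,S_{n-j}(A)$, so
\[
\sum_k -\lambda_k\,\d_{kk}S_j(A^{-1}) = j\,S_j(A^{-1}),
\]
with coefficient $j$ rather than $j+1$. The conclusion $F(A) \leq \sum_k -\lambda_k\d_{kk}F(A) \leq n F(A)$ still holds (in fact with $C=n$ rather than $n+1$), so this does not affect the validity of the proof. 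In condition (4) you spell out why $S_{n-j;1}(A)\geq S_{n-j;i}(A)$ via the expansion $S_l = S_{l;p} + \lambda_p S_{l-1;p}$; the paper simply asserts this inequality, so your version is slightly more explicit but not a different argument.
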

\begin{proof}
  It is clear that $F(A) > 0$, and from Lemma~\ref{lem:dij} we have,
  at a diagonal $A$ with eigenvalues $\lambda_i >0$, that
  \[ \d_{ii}F(A) = -\sum_{k=1}^n c_k \frac{S_{n-k;i}(A)}{\lambda_i
    S_n(A)}, \]
  which is negative. The required convexity property (2) follows from
  Lemma~\ref{lem:convex}.

  For property (3), note that
  \[ \sum_i -\lambda_i\d_{ii}F(A) = \sum_{k,i} c_k
  \frac{S_{n-k;i}(A)}{S_n(A)} = \sum_{k=1}^n kc_k
  \frac{S_{n-k}(A)}{S_n(A)} = \sum_{k=1}^n kc_k S_k(A^{-1}), \]
  using the identity $\sum_{i=1}^n S_{l;i}(A) = (n-l)S_l(A)$. It
  follows that
  \[ F(A) \leq -\sum_i \lambda_i \d_{ii}F(A) \leq nF(A). \]

  To show property (4), recall that
  \[ -\lambda_i \d_{ii}F(A) = \sum_{k=1}^n c_k
  \frac{S_{n-k,i}(A)}{S_n(A)}, \]
  and note that if $\lambda_1$ is the smallest eigenvalue, then
  $S_{n-k;1}(A) \geq S_{n-k;i}(A)$ for all $i$. It follows that
  $-\lambda_1\d_{11}F(A) \geq -\lambda_i\d_{ii}F(A)$.

  Finally,
  property (5) is clear since we have
  \[ g(x_1,\ldots,x_n) = f(x_1^{-1},\ldots, x_n^{-1}) = \sum_{k=1}^n
  c_k S_k(x_1,\ldots, x_n), \]
  which extends smoothly to the orthant $\{x_i\geq 0\}$. 
\end{proof}

\section{The proof of Theorem~\ref{thm:2}}\label{sec:thm1}
In the proof of Theorem~\ref{thm:2} a key role is played by the
parabolic equation \eqref{eq:parabolic}. In this section we will
consider the operator
\[ F_\epsilon(A) = S_1(A^{-1}) - \epsilon S_n(A^{-1}), \]
for small $\epsilon > 0$. Note that this operator
is not convex on the space of all Hermitian matrices. The following lemma shows that it is still convex,
however, on a suitable set of matrices $A$.
\begin{lemma}\label{lem:QK}
 \begin{enumerate}
    \item For all $Q > 0$, if
  $\epsilon$ is sufficiently small, then $F_\epsilon$ satisfies the
  structural conditions in Section~\ref{sec:C2} on the set where $S_1(A^{-1})
  < Q$.
  \item For all $K
  > 0$, if $\epsilon$ is sufficiently small, then $F_\epsilon$
  satisfies the structural conditions in
  Section~\ref{sec:C2} on the connected component $\mathcal{M}$ of the set
  $\{F_\epsilon(A) < K\}$ containing the set $\{S(A^{-1}) < K\}$.
  \end{enumerate}
\end{lemma}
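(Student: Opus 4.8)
The plan is to verify each of the five structural conditions of Section~\ref{sec:C2} for $F_\epsilon(A) = S_1(A^{-1}) - \epsilon S_n(A^{-1})$ in turn, treating the two parts of the lemma in parallel since they differ only in how one controls the eigenvalues of $A$. In both cases the starting observation is that on the relevant set, the eigenvalues $\lambda_i$ of $A$ satisfy a uniform lower bound $\lambda_i > \delta$ for some $\delta > 0$ depending only on $Q$ (resp.\ $K$): indeed $\lambda_i^{-1} \le S_1(A^{-1}) < Q$ directly in case (1), while in case (2) one argues that on the connected component $\mathcal{M}$ of $\{F_\epsilon < K\}$ containing $\{S_1(A^{-1}) < K\}$ one still has $S_1(A^{-1})$ bounded — because $F_\epsilon(A) = S_1(A^{-1}) - \epsilon S_n(A^{-1})$ and $S_n(A^{-1}) \le \delta^{-(n-1)} S_1(A^{-1})$ on the region where a lower eigenvalue bound already holds, so $F_\epsilon$ and $S_1(A^{-1})$ are comparable there, and a continuity/connectedness argument confined to this component keeps us in the regime where Lemma~\ref{lem:twistedconvex} applies. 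Once the uniform lower bound $\lambda_i > \delta$ is in hand, conditions (1) and (2) are \emph{exactly} the content of Lemma~\ref{lem:twistedconvex}: part (1) of that lemma gives $F_\epsilon(A) > 0$ and $\partial_{ii}F_\epsilon(A) < 0$, and part (2) gives the convexity inequality \eqref{eq:Fconvex}, provided $\epsilon$ is small depending on $\delta$, hence on $Q$ (resp.\ $K$).

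Next I would check conditions (3), (4), (5). For (5), the function $g(x_1,\dots,x_n) = f(x_1^{-1},\dots,x_n^{-1}) = S_1(x_1,\dots,x_n) - \epsilon S_n(x_1,\dots,x_n)$ is a polynomial, hence extends smoothly to the orthant $\{x_i \ge 0\}$ — this is immediate and identical to the corresponding step in Lemma~\ref{lem:Fd}. For (3), I would compute, using Lemma~\ref{lem:dij}, that $-\lambda_i \partial_{ii}F_\epsilon(A) = \frac{S_{n-1;i}(A) - \epsilon}{S_n(A)}$, so $\sum_i -\lambda_i\partial_{ii}F_\epsilon(A) = \frac{(n-1)S_{n-1}(A) - n\epsilon}{S_n(A)} = (n-1)S_1(A^{-1}) - n\epsilon S_n(A^{-1})$, using $\sum_i S_{n-1;i}(A) = S_{n-1}(A)$; comparing with $F_\epsilon(A) = S_1(A^{-1}) - \epsilon S_n(A^{-1})$ and using the uniform bounds $\delta^{-(n-1)}S_1(A^{-1}) \ge S_n(A^{-1})$ together with a lower bound on $S_1(A^{-1})$ (which holds because at least one $\lambda_i$ is bounded above — else $F_\epsilon$ would not be bounded, contradicting membership in $\mathcal{M}$ in case (2), and being $< Q$ is not by itself enough, so one may instead simply note $S_1(A^{-1}) \ge \lambda_{\max}^{-1}$ is not bounded below in general — here one must argue more carefully), one gets $C^{-1}F_\epsilon(A) < \sum_i -\lambda_i\partial_{ii}F_\epsilon(A) < C F_\epsilon(A)$ for $\epsilon$ small. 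For (4), with $\lambda_1$ the smallest eigenvalue, $S_{n-1;1}(A) = \lambda_2\cdots\lambda_n \ge S_{n-1;i}(A)$ for every $i$ since omitting the smallest factor gives the largest product, so $-\lambda_1\partial_{11}F_\epsilon(A) = \frac{S_{n-1;1}(A) - \epsilon}{S_n(A)} \ge \frac{S_{n-1;i}(A) - \epsilon}{S_n(A)} = -\lambda_i\partial_{ii}F_\epsilon(A)$ once $\epsilon < \delta^{n-1}$ guarantees both numerators are positive; this gives (4) with constant $C = 1$.

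The main obstacle I anticipate is condition (3), specifically obtaining the \emph{lower} bound $\sum_i -\lambda_i \partial_{ii}F_\epsilon(A) > C^{-1} F_\epsilon(A)$: this requires controlling $S_1(A^{-1})$ from below in terms of $F_\epsilon(A)$, which in turn needs an \emph{upper} bound on the largest eigenvalue $\lambda_{\max}$ of $A$ — equivalently a lower bound on the smallest eigenvalue of $A^{-1}$. Such a bound is not automatic from $S_1(A^{-1}) < Q$ alone. I expect the resolution to be that in the genuinely delicate regime (large $\lambda_{\max}$, i.e.\ $A^{-1}$ nearly degenerate), the terms $-\lambda_i\partial_{ii}F_\epsilon$ corresponding to large $\lambda_i$ are small of order $\lambda_i^{-1}$ (as in the computation $x_p g(x_i)$ uniformly continuous near the orthant boundary, used in Proposition~\ref{prop:BenC2}), so that both $F_\epsilon(A)$ and $\sum -\lambda_i\partial_{ii}F_\epsilon(A)$ are controlled by the sum over the \emph{bounded} eigenvalues, on which everything is comparable. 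Making this precise — perhaps by splitting indices into those with $\lambda_i$ large and those with $\lambda_i$ bounded, using the lower bound $\lambda_i > \delta$ to keep $S_n(A)$ from vanishing and the polynomial structure of $g$ — is the one place where some care beyond citing Lemma~\ref{lem:twistedconvex} and Lemma~\ref{lem:Fd} is needed; the remaining conditions are either direct consequences of Lemma~\ref{lem:twistedconvex} or routine symmetric-function manipulations mirroring Lemma~\ref{lem:Fd}.
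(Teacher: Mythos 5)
Your overall structure is the same as the paper's: deduce a uniform eigenvalue lower bound $\lambda_i>\delta$ on the relevant set, cite Lemma~\ref{lem:twistedconvex} for structural conditions (1) and (2), and verify (3), (4), (5) by direct symmetric-function manipulation. However, there are two places where the execution differs from the paper in a way that matters.

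First, the obstacle you anticipate in condition (3) is not there, and the reasoning around it should be deleted. You do not need a lower bound for $S_1(A^{-1})$ in terms of $F_\epsilon(A)$ beyond the trivial one, and you do not need an upper bound on $\lambda_{\max}$. Using the correct identity $\sum_i S_{n-1;i}(A)=S_{n-1}(A)$ (your displayed formula has an extraneous factor $(n-1)$), one gets
\[
\sum_i -\lambda_i\partial_{ii}F_\epsilon(A)=S_1(A^{-1})-n\epsilon S_n(A^{-1}),
\qquad
F_\epsilon(A)=S_1(A^{-1})-\epsilon S_n(A^{-1}).
\]
Since the eigenvalue lower bound $\lambda_i>1/Q$ gives $S_n(A^{-1})<Q^{n-1}S_1(A^{-1})$, both quantities lie between $(1-n\epsilon Q^{n-1})S_1(A^{-1})$ and $S_1(A^{-1})$, so for $\epsilon$ small their ratio is uniformly close to $1$, with no further input. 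The delicate index-splitting you propose for the regime $\lambda_{\max}\to\infty$ is unnecessary; $F_\epsilon(A)\le S_1(A^{-1})$ holds always.

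Second, and more substantially, your argument for part (2) is circular as stated. You propose to bound $S_1(A^{-1})$ on the connected component by invoking $S_n(A^{-1})\le\delta^{-(n-1)}S_1(A^{-1})$, but that bound requires the eigenvalue lower bound $\lambda_i>\delta$, which you are in the process of trying to establish on that component. The paper avoids this by using a bound on $S_n(A^{-1})$ that is independent of any eigenvalue lower bound: AM--GM gives $S_n(A^{-1})\le n^{-n}S_1(A^{-1})^n$ unconditionally. Setting $Q=K+1$, this shows that on the level set $\{S_1(A^{-1})=K+1\}$ one has $S_n(A^{-1})<K'$ for a fixed $K'$, and then choosing $\epsilon$ so small that $K+1-\epsilon K'>K$ forces $F_\epsilon(A)>K$ there. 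Thus a path in $\{F_\epsilon<K\}$ starting from $\{S_1(A^{-1})<K\}$ can never reach $\{S_1(A^{-1})=K+1\}$, and one concludes $S_1(A^{-1})<K+1$ on all of $\mathcal{M}$, reducing (2) to (1). Without this a priori (non-circular) control of $S_n(A^{-1})$ in terms of $S_1(A^{-1})$, the ``continuity/connectedness argument'' you gesture at does not close.
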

\begin{proof}
  For statement (1), note that if $S_1(A^{-1}) < Q$, then all
  eigenvalues of $A$ are greater than $1/Q$. By Lemma~\ref{lem:twistedconvex}, for
  sufficiently small $\epsilon$, the map $F_\epsilon$ satisfies the
  structural conditions (1) and (2) on the set of matrices with eigenvalues
  greater than $1/Q$. For structural condition (3), we have 
  \[ \sum_k -\lambda_k \d_{kk}F_\epsilon(A) = \sum_k \frac{
    S_{n-1;k}(A) - \epsilon}{S_n(A)} = \frac{S_{n-1}(A)}{S_n(A)} -
  \frac{n\epsilon}{S_n(A)} = S_1(A^{-1}) - n\epsilon
  S_n(A^{-1}).\]
  The required inequality follows since $S_n(A^{-1}) <
  Q^{n-1}S_1(A^{-1})$, and so if $\epsilon$ is sufficiently small, we
  will have
  \[ \frac{1}{2} F_\epsilon(A) < \sum_k -\lambda_k \d_{kk}F(A) <
  CF(A). \]

  Although we do not actually need structural assumptions (4) and (5)
  below (since we will only use Proposition~\ref{prop:longtime} which
  does not use them), they are also easy to check, (5) being
  immediate. For (4), note that
  \[ -\lambda_i\d_{ii}F_\epsilon(A) = \frac{S_{n-1;i}(A) -
    \epsilon}{S_n(A)}, \]
  and so for sufficiently small $\epsilon$ we will have
  \[ \frac{1}{2}(-\lambda_i\d_{ii}S_1(A^{-1})) \leq
  -\lambda_i\d_{ii}F_\epsilon(A) \leq \lambda_i\d_{ii}S_1(A^{-1}), \]
  so (4) follows from the corresponding property of the map $A\mapsto
  S_1(A^{-1})$, which we have shown in Lemma~\ref{lem:Fd}. 
  
  For statement (2) let us take $Q = K+1$, and take $\epsilon$
  sufficiently small for (1) to apply. In addition, note that the
  AM-GM inequality implies that $S_n(A^{-1}) \leq n^{-n}
  S_1(A^{-1})^n$, so if $S_1(A^{-1}) = K+1$, then $S_n(A^{-1})
  < K'$ for some constant $K'$. Let us choose $\epsilon$ even smaller,
  so that $K+1- \epsilon K' > K$. This means that
  if $S_1(A^{-1}) = K+1$, then
  \[ F_\epsilon(A) \geq K + 1 - \epsilon K' > K. \]
  Suppose now that $A_1$ is in the connected component of
  $\{F_\epsilon(A) < K\}$ containing $\{S_1(A^{-1}) < K\}$, i.e. we have
  positive Hermitian matrices $A_t$ for $t\in [0,1]$ such that
  $F_\epsilon(A_t) < K$, and $S_1(A_0^{-1}) < K$. Then from the above we have
  $S_1(A_t^{-1}) < K+1$ for all $t$, and in particular $S_1(A_1^{-1}) < K+1$. It
  follows from (1) that $F_\epsilon$ satisfies the structural conditions on
  this connected component. 
\end{proof}

Consider now the evolution equation
\begin{equation}\label{eq:flow}
  \begin{cases} \frac{\d\phi_t}{\d t} &= c_\epsilon - F_\epsilon(A_t) \\
  \phi_0 &= 0,
\end{cases}
\end{equation}
where $\omega_t = \omega_0 + \ddbar\phi_t$, and $(A_t)^i_j =
\alpha^{i\bar k}g_{t, j\bar k}$. In addition the constant $c_\epsilon$
is chosen so that
\[ \int_M c_\epsilon\omega^n = \int_M F_\epsilon(A)\,\omega^n.\]

From the previous lemma we have that
if $F(A_0) < K$, and $\epsilon$ is chosen sufficiently small,
then $F_\epsilon$ satisfies the structural conditions on the connected
component of $\{F_\epsilon(A) < K\}$ containing $A_0$.  
Proposition~\ref{prop:longtime} then implies that the flow exists for all
time. Note that in addition the proof of Lemma~\ref{lem:QK} also shows
that we can assume $S_1(A_t^{-1}) < K+1$ along the flow.

The parabolic equation~\eqref{eq:flow} is the negative gradient flow of
the functional $\mathcal{J}_\epsilon$ 
on the K\"ahler class $[\omega]$, defined by the variational formula
\[ \left.\frac{d}{dt}\right|_{t=0} \mathcal{J}_\epsilon(\omega + t\ddbar\phi) =
\int_M \phi(F_\epsilon(A) - c_\epsilon) \omega^n, \]
and normalized so that $\mathcal{J}_\epsilon(\omega_0)=0$ for a fixed 
choice of $\omega_0$. Note that $\mathcal{J}_0$ is the $J$-functional
considered in Song-Weinkove~\cite{SW04}. 

\begin{defn}
  We say that the function $\mathcal{J}_0$ is proper, if there
  are constants $C, \delta > 0$ such that if $\omega = \omega_0 +
  \ddbar\phi$, then 
  \[ \begin{aligned} \mathcal{J}_0(\omega) &\geq -C + \delta\int_M \phi(\omega_0^n -
  \omega^n) \\
&= -C + \delta\int_M \sqrt{-1}
  \d\phi\wedge \bar{\d}\phi\wedge(\omega_0^{n-1} +\ldots +
  \omega^{n-1}) 
\end{aligned}\]
  for all $\omega \in \Omega$. 
\end{defn}

The main ingredient in the proof of Theorem~\ref{thm:2} is the
following.
\begin{prop}\label{prop:properconverge}
  Suppose that $\mathcal{J}_0$ is proper. Then we can find $\omega\in
  \Omega$ such that $A^i_j = \alpha^{i\bar k}\omega_{j\bar k}$
  satisfies $S_1(A^{-1}) = c$, i.e. $\Lambda_\alpha\omega = c$.  
\end{prop}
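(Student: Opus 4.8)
The plan is to exploit the variational structure: since $\mathcal{J}_0$ is proper and the flow~\eqref{eq:flow} (for $\epsilon = 0$, or rather its $\epsilon$-perturbation) is the negative gradient flow of $\mathcal{J}_\epsilon$, I expect to obtain a priori bounds along the flow that force convergence to a critical point, i.e.\ a solution of $S_1(A^{-1}) = c$. The first step is to relate properness of $\mathcal{J}_0$ to properness of $\mathcal{J}_\epsilon$ for small $\epsilon$: since $F_\epsilon = S_1(A^{-1}) - \epsilon S_n(A^{-1})$ and $S_n(A^{-1})$ integrates to a topological constant (up to lower order terms controlled using the constraint $S_1(A_t^{-1}) < K+1$ established after Lemma~\ref{lem:QK}), the functional $\mathcal{J}_\epsilon$ differs from $\mathcal{J}_0$ by a controlled amount, so for $\epsilon$ small enough $\mathcal{J}_\epsilon$ is also proper, with slightly weaker constants. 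Here one should fix $K$ large enough (depending on $\omega_0, \alpha$) that $F(A_0) < K$, then choose $\epsilon$ small accordingly, so that Proposition~\ref{prop:longtime} gives long-time existence and the a priori bound $S_1(A_t^{-1}) < K+1$ holds along the whole flow.

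Next I would run the standard Song--Weinkove-type convergence argument. Along the flow, $\mathcal{J}_\epsilon(\omega_t)$ is non-increasing and bounded below (since the flow decreases it and a subsequential limit would have to be a critical point — but more directly, properness plus an energy estimate bounds it below), hence $\int_0^\infty \|\nabla \mathcal{J}_\epsilon\|^2\,dt < \infty$. Combined with properness, this gives a uniform bound on $\int_M \phi_t(\omega_0^n - \omega_t^n)$, i.e.\ a uniform bound on a Sobolev-type energy of $\phi_t$ (after normalizing by $\sup \phi_t = 0$ or by the Aubin--Yau normalization). The bound $S_1(A_t^{-1}) < K+1$ already gives $\omega_t > C^{-1}\alpha$, i.e.\ a lower bound on $\omega_t$; to upgrade to a $C^0$ bound on $\phi_t$ one invokes the Song--Weinkove / Weinkove--Tosatti type estimate (as cited via Weinkove~\cite{W06}, Proposition~4.2) that the energy bound plus the lower bound on $\omega_t$ controls $\|\phi_t\|_{C^0}$. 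With $\|\phi_t\|_{C^0}$ bounded, Proposition~\ref{prop:BenC2} (or rather its parabolic analog, the $C^2$ estimate proven earlier in Section~\ref{sec:C2}) gives a uniform $C^2$ bound $\Lambda_\alpha \omega_t < C$; then convexity of $F_\epsilon$ on the relevant set $\mathcal{M}$ lets Evans--Krylov and Schauder give uniform $C^{k,\alpha}$ bounds for all $k$.

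With uniform smooth bounds in hand, a standard argument (using that $\int_0^\infty \int_M |F_\epsilon(A_t) - c_\epsilon|^2\,\omega_t^n\,dt < \infty$ together with a uniform bound on the time derivative of this quantity, which follows from the smooth estimates) shows $F_\epsilon(A_{t_k}) \to c_\epsilon$ in $C^0$ along a sequence $t_k \to \infty$, and a subsequence of $\omega_{t_k}$ converges smoothly to some $\omega_\infty = \omega_0 + \ddbar\phi_\infty \in \Omega$ with $F_\epsilon(A_\infty) = c_\epsilon$, i.e.\ $S_1(A_\infty^{-1}) - \epsilon S_n(A_\infty^{-1}) = c_\epsilon$. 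Finally I would let $\epsilon \to 0$: taking a sequence $\epsilon_j \to 0$, the corresponding solutions $\omega^{(j)}$ satisfy the same a priori estimates uniformly in $j$ (the constants in properness of $\mathcal{J}_{\epsilon_j}$, and in the $C^2$ estimate, being uniform for small $\epsilon$), so a subsequence converges smoothly to $\omega \in \Omega$ with $S_1(A^{-1}) = c$, since $\epsilon_j S_n(A_j^{-1}) \to 0$ and $c_{\epsilon_j} \to c$.

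The main obstacle I anticipate is the $C^0$ estimate for $\phi_t$ from the energy bound: properness only directly controls the Dirichlet-type energy $\int_M \sqrt{-1}\d\phi\wedge\bar\d\phi\wedge(\omega_0^{n-1} + \cdots + \omega^{n-1})$, and converting this (plus the already-known two-sided bound on $\omega_t$ relative to $\alpha$, but \emph{not} relative to $\omega_0$) into a sup bound on $\phi_t$ is exactly the delicate point — this is where one must carefully invoke the Weinkove-type argument, and one must make sure the constants are uniform in $\epsilon$. A secondary subtlety is ensuring that the flow stays in the connected component $\mathcal{M}$ where $F_\epsilon$ is convex, so that the higher-order estimates apply; this is handled by the preservation of $S_1(A_t^{-1}) < K+1$ noted after Lemma~\ref{lem:QK}, but it must be threaded consistently through the whole argument.
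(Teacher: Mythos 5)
Your proposal takes the classical Song--Weinkove flow-convergence route (energy bound $\Rightarrow$ $C^0$ $\Rightarrow$ $C^2$ $\Rightarrow$ smooth convergence of the flow for each fixed $\epsilon$, then send $\epsilon\to 0$), and you correctly identify the $C^0$ estimate as the delicate step; but the gap you flag there is real and, as far as I can see, not closable by the tools you cite. The parabolic $C^2$ estimate established in Section~\ref{sec:C2} gives only the time-dependent bound $\Lambda_\alpha\omega_t < C(t+1)$; the uniform exponential estimate $\Lambda_\alpha\omega_t < Ce^{N(\phi_t-\inf\phi_t)}$ is Proposition~\ref{prop:BenC2}, whose hypothesis is precisely the existence of a strict viscosity subsolution $\chi$ --- which is the very object the proposition is trying to produce. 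Likewise, the Weinkove argument (\cite{W06}, \cite{Wei04}) you invoke upgrades such an exponential $C^2$ bound to a uniform $C^0$ (and hence $C^2$) bound; it is not a device for extracting a $C^0$ bound directly from the Dirichlet energy controlled by properness. So your chain energy bound $\Rightarrow$ $C^0$ $\Rightarrow$ $C^2$ is circular: both arrows ultimately require a subsolution that is not available at this stage.

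The paper sidesteps this entirely: it does not attempt uniform estimates along the flow at all. Instead it uses the flow only to produce a sequence $\omega_k$ with $\Vert F_\epsilon(A_k)-c_\epsilon\Vert_{L^2(\alpha)}\to 0$ (using the lower bound $\omega_k>\kappa\alpha$ coming from $S_1(A_k^{-1})<K+1$, not from properness). Properness then furnishes the integral bound $-\int_M\psi_k\omega_k^n<C$, which by Guedj--Zeriahi forces the weak limit $\psi_\infty$ to have zero Lelong numbers. This permits a B\l{}ocki--Ko\l{}odziej mollification uniformly in $k$, and a careful bookkeeping shows that for $k$ large and $\delta$ small, $\chi=\omega_0+\ddbar\psi_{k,\delta}$ is a \emph{strict viscosity subsolution}, i.e.\ $\widetilde F(\alpha^{i\bar p}\chi_{j\bar p})\leq c-\delta'$. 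One then feeds $\chi$ into the elliptic continuity method (Theorem~\ref{thm:1visc}) to solve $\Lambda_\alpha\omega=c$ directly --- no flow convergence, no $\epsilon\to 0$ limit of smooth solutions. In short: the construction of the subsolution via mollification and zero Lelong numbers is the missing idea in your argument, and it replaces (rather than complements) the uniform-estimate-along-the-flow step you were aiming for.
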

\begin{proof}
  For simplicity of notation let us normalize the class $\Omega$ and $\alpha$ so that $\int_M
  \omega^n = \int_M \alpha^n = 1$. It follows that
  \[ c_\epsilon = c - \epsilon \int_M \frac{\alpha^n}{\omega^n}
  \omega^n = c-\epsilon, \]
  for any $\epsilon > 0$. If $\omega = \omega_0 + \ddbar\phi$, then we have
  \[ (\mathcal{J}_\epsilon - \mathcal{J}_0)(\omega) = \epsilon \int_0^1 \int_M
  \phi ( \omega_t^n - \alpha^n), \]
  where $\omega_t = \omega_0 + t\ddbar\phi$. Using Yau's
  Theorem~\cite{Yau78}, we can assume that we
  chose our base point $\omega_0$ so that $\omega_0^n = \alpha^n$, so
  \[ \begin{aligned}
    \mathcal{J}_\epsilon(\omega) &= \mathcal{J}_0(\omega) + \epsilon\int_0^1
    \int_M \phi(\omega_t^n - \omega_0^n) 
  \\ &\geq -C + \delta\int_M \sqrt{-1}\d\phi\wedge
  \bar{\d}\phi\wedge(\omega_0^{n-1} + \ldots + \omega^{n-1}) \\
&\quad  -\epsilon\int_0^1 \int_M \sqrt{-1}\d\phi
  \wedge\bar{\d}\phi\wedge (\omega_t^{n-1} + \ldots +
  \omega_0^{n-1}). 
  \end{aligned}\]
It follows that if $\epsilon$ is sufficiently small, then
$\mathcal{J}_\epsilon$ will be proper. 

Choose $\epsilon$ even smaller if necessary so that
Equation~\eqref{eq:flow} has a solution $\omega_t = \omega_0 +
\ddbar\phi_t$ for all $t > 0$. Since this flow is the negative
gradient flow of $\mathcal{J}_\epsilon$, and this functional is
bounded from below, we can find a sequence of metrics $\omega_k$ along
the flow such that 
\[ \lim_{k\to\infty} \int_M (F_\epsilon(A_k) -
c_\epsilon)^2\,\omega_k^n = 0. \]
Since we have $S_1(A_t^{-1}) < K+1$ along the flow for a uniform
constant $K$, we have a uniform lower bound $\omega_k >
\kappa\alpha$. It follows that
\begin{equation}\label{eq:limk} \lim_{k\to\infty} \Vert F_\epsilon(A_k) -
c_\epsilon\Vert_{L^2(\alpha)} = 0, 
\end{equation}
and from Lemma~\ref{lem:FAt} we know that $F_\epsilon(A_k) < K$. Choosing
$\epsilon$ even smaller, using Lemma~\ref{lem:twistedconvex} we can
assume that $F_\epsilon$ satisfies the structural conditions on the set of matrices with
eigenvalues bounded below by $\frac{\kappa}{2}$. The importance of
this is that this is a convex set. 

Let us write $\psi_k = \phi_k - \sup_M\phi_k$, so that $\omega_k =
\omega_0 +\ddbar\psi_k$, and $\sup_M\psi_k = 0$.
The properness of $\mathcal{J}_\epsilon$ implies that we
have a uniform constant $C$ such that
\[ \int_M \psi_k(\omega_0^n - \omega_k^n) < C. \]
A standard argument using the inequality $\omega_0 + \ddbar\psi_k > 0$
together with $\sup_M\psi_k =0$ implies that $\int_M \psi_k\omega_0^n$
is bounded below uniformly. It follows that we have a uniform bound
\begin{equation}\label{eq:Epsibound}
  -\int_M \psi_k \omega_k^n < C.
  \end{equation}
Choosing a subsequence we can assume that the $\omega_k$ converge
weakly to a current $\omega_0 + \ddbar\psi_\infty$, and $\phi_k \to
\phi_\infty$ in $L^1$. From Guedj-Zeriahi~\cite[Corollary 1.8, Corollary 2.7]{GZ07},
the bound \eqref{eq:Epsibound} implies that $\psi_\infty$ has zero
Lelong numbers.

We now use the technique of Blocki-Kolodziej~\cite{BK07} to mollify the
metrics $\omega_k$, in order to obtain pointwise bounds from the
integral bound \eqref{eq:limk}. The fact that $\psi_\infty$ has zero
Lelong numbers will ensure that we can perform this mollification
uniformly in $k$.

Fix a small number $\tau > 0$, and choose a finite open cover
$\{W_i\}$ of $M$ such that on each $W_i$ we have local coordinates
$z^j$, in which the matrix of components of $\alpha$ satisfies
\begin{equation}\label{eq:alphacomp}
  (1-\tau) \delta_{ij} < \alpha_{i\bar j} < (1 + \tau)\delta_{ij}.
\end{equation}
Let $V_i\subset U_i\subset W_i$ be relatively compact so that the
$V_i$ still cover $M$. On each $W_i$ we have $\omega_0 = \ddbar f_i$
for local potentials $f_i$, and so we have the plurisubharmonic
functions $u^{(k)}_i = f_i + \psi_k$, which are local potentials for
the $\omega_k$. We allow $k=\infty$ here. For sufficiently small
$\delta > 0$ (depending on the the distance between the boundaries of
$U_i, W_i$, and so on $\tau$) we can define plurisubharmonic functions
$u^{(k)}_{i,\delta}$ on $U_i$ by
\[ u^{(k)}_{i,\delta}(z) = \int_{\mathbf{C}^n} u^{(k)}_i(z-\delta
w)\rho(w)\,dw, \]
where $\rho:\mathbf{C}^n\to\mathbf{R}$ is a standard mollifier:
$\rho\geq 0$, $\rho(w) = 0$ for $|w|> 1$, and $\int \rho(w)\,dw = 1$.

For each $i$ we choose $\eta_i : U_i\to\mathbf{R}$ such that
$\eta_i\leq 0$, and in addition $\eta_i=0$ on $V_i$ and $\eta_i=-1$ on
$\d U_i$. Fix $\gamma > 0$ to be sufficiently small
(depending on $\tau$)
such that $\gamma|\ddbar \eta_i| < \tau\alpha$ on $U_i$ for all $i$.

Consider the function
\begin{equation}\label{eq:psik}
 \psi_{k,\delta}(z) = \max_i \{ u^{(k)}_{i,\delta}(z) - f_i(z) +
\gamma\eta_i(z) \}, 
\end{equation}
where the maximum is taken over all $i$ for which $z\in U_i$. The
results in Blocki-Kolodziej~\cite{BK07} (see the proof of Theorem 2)
imply that if $\delta$ is sufficiently small (depending on $\tau$),
then
\[ |(u_{i,\delta}^{(\infty)}-f_i) - (u^{(\infty)}_{j,\delta} - f_j)| <
\frac{\gamma}{4} \]
on $U_i\cap U_j$, since $\psi_\infty$ has zero Lelong numbers. The
$L^1$-convergence $\psi_k\to\psi_\infty$ implies that we have uniform
convergence $u^{(k)}_{i,\delta}\to u^{(\infty)}_{i,\delta}$ of the
mollifications as $k\to\infty$, and so once $k$ is chosen sufficiently
large we will have
\[ |(u^{(k)}_{i,\delta} - f_i) - (u^{(k)}_{j,\delta} - f_j)| <
\frac{\gamma}{2}, \]
on the set $U_i\cap U_j$. This implies that if $z\in \d U_i\cap V_j$,
then
\[ (u^{(k)}_{i,\delta} - f_i + \gamma\eta_i)(z) < (u^{(k)}_{j,\delta}
- f_j + \gamma\eta_j)(z), \]
so all $z\in M$ have a neighborhood $U$ such that in the definition of
$\psi_{k,\delta}$ the maximum can be taken over those $j$ for which 
$u^{(k)}_{j,\delta} - f_j + \gamma\eta_j$ is defined on $U$ (i.e. for
which $U\subset U_j$). In particular $\psi_{k,\delta}$ is
continuous. We will now see that in addition for large $k$ and small
$\delta$, the form $\chi = \omega_0 + \ddbar\psi_{k,\delta}$ satisfies
$\widetilde{F}(\alpha^{i\bar p}\chi_{j\bar p}) \leq c - \delta'$ in
the viscosity sense for small $\delta' > 0$, and so 
by Theorem~\ref{thm:1visc} we can solve the
equation $\Lambda_\omega\alpha = c$. Using
Lemma~\ref{lem:maxviscosity} it is enough to show that for large $k$
and small $\delta$ the metric $\chi_i = \omega_0 + \ddbar(u^{(k)}_{i,\delta} +
\gamma\eta_i)$ on $U_i$ satisfies $\widetilde{F}(\alpha^{p\bar m}\chi_{i,q\bar m})
\leq c -\delta'$. Note that $F(A) = S_1(A^{-1})$ here. 

Let us work at a point $z\in U_i$. Define the following four matrix
valued functions, defined in $U_i$:
\[ \begin{aligned}
  A^p_q &= \alpha^{p\bar m}\d_q\d_{\bar m} u^{(k)}_i = \alpha^{p\bar
    m} \omega_{k,q\bar m}. \\
  B^p_q &= \d_q\d_{\bar p} u^{(k)}_i \\
  C^p_q &= \d_q\d_{\bar p} u^{(k)}_{i,\delta} \\
  D^p_q &= \alpha^{p\bar m}\d_q\d_{\bar m}(u^{(k)}_{i,\delta} +
  \gamma\eta_i)
  \end{aligned}\]

Because of \eqref{eq:alphacomp}, the definition of the
mollification $u^{(k)}_{i,\delta}$ and our bound on $\gamma$, the
eigenvalues of all these matrices are at least $\kappa/2$,
and we can choose eigenvectors so that the
corresponding eigenvalues are all as close to each other as we like,
if $\tau$ is sufficiently small. In particular the same holds for the
reciprocals of the eigenvalues. In what follows, let us denote by
$h(\tau)$ a function such that $h(\tau)\to 0$ as $\tau\to 0$, and
which may change from line to line. By the structural assumption
(5) for $F_\epsilon$, we will then have
\[ F_\epsilon(B) < F_\epsilon(A) + h(\tau). \]
The convexity of $F_\epsilon$ implies that
\[ F_\epsilon(C) \leq \int_{\mathbf{C}^n} F_\epsilon(B(z - \delta
w))\rho(w)\,dw \leq \int_{\mathbf{C}^n} F_\epsilon(A(z-\delta
w))\rho(w)\,dw + h(\tau). \]
It follows that
\[ F_\epsilon(C) - c_\epsilon \leq C_\tau \Vert F_\epsilon(A) -
c_\epsilon\Vert_{L^2(\alpha)} + h(\tau), \]
where the constant $C_\tau$ blows up as $\tau\to 0$. 
Note that in our notation here $A$ is the same as $A_k$ in
Equation~\eqref{eq:limk}. Finally interchanging $C$ with $D$ will
only introduce a small error, again by structural assumption (5), so
we have
\[ F_\epsilon(D) - c_\epsilon \leq C_\tau \Vert F_\epsilon(A) -
c_\epsilon\Vert_{L^2(\alpha)} + h(\tau). \]
  
Let us write out what this means at $z$
in normal coordinates for $\alpha$, such that $\ddbar
(u^{(k)}_{i,\delta} + \gamma\eta_i)$
is diagonal with eigenvalues $\lambda_1,\ldots,
\lambda_n$ (so that these are the eigenvalues of $D$). We have
\[ \frac{1}{\lambda_1} + \ldots + \frac{1}{\lambda_n} -
\frac{\epsilon}{\lambda_1\cdots\lambda_n} -c + \epsilon \leq C_\tau \Vert
F_\epsilon(A) - c_\epsilon\Vert_{L^2} + h(\tau). \]
Since $\lambda_i > \kappa/2$, we can first choose $\epsilon$
sufficiently small so that $\epsilon(\lambda_1\cdots\lambda_n)^{-1} < 1 / \lambda_i$
for all $i$. We then choose $\tau$ so small that $h(\tau) < \epsilon /
4$, and finally, according to \eqref{eq:limk} we can choose $k$ sufficiently large, so
that $C_\tau\Vert F_\epsilon(A_k) - c_\epsilon\Vert_{L^2} <
\epsilon/4$. Combining these we have for each $i$, that
\[ \sum_{j\not=i} \frac{1}{\lambda_j} < c -
\frac{\epsilon}{2}. \]
But this means that  $\chi_i= \omega_0 +
\ddbar(u^{(k)}_{i,\delta} + \gamma\eta_i)$ satisfies
$\widetilde{F}(\alpha^{p\bar m}\chi_{i,q\bar m}) < c - \epsilon /
2$. This implies that $\chi = \omega_0 + \ddbar\psi_{k,\delta}$
satisfies $\widetilde{F}(\alpha^{i\bar p}\chi_{j\bar p}) \leq c - \epsilon / 2$ in
the viscosity sense, and so by  Theorem~\ref{thm:1visc} 
there is a metric $\omega \in
\Omega$ solving $\Lambda_\alpha\omega = c$. 
\end{proof}

\begin{remark}
  In the proof above, one could try to find a smooth metric $\chi$
  satisfying $\widetilde{F}(\alpha^{i\bar p}\chi_{j\bar p}) \leq
  c-\epsilon / 2$, by taking a regularized maximum in
  Equation~\eqref{eq:psik}. It is not at all clear, however, that the
  regularized maximum will satisfy the required subsolution
  property. This is why we take the maximum instead and work with
  subsolutions in the viscosity sense. In the proof of
  Theorem~\ref{thm:toric} we will face a similar problem, and need to
  consider  viscosity subsolutions. 
\end{remark}

The proof of Theorem~\ref{thm:2} now follows from
Proposition~\ref{prop:Jproper} below, which
is essentially contained in the work of Song-Weinkove~\cite{SW04}. 
Let us denote by $\mathcal{J}_\alpha$ the functional
$\mathcal{J}_0$ above, and by $\mathcal{J}_\beta$ the same functional
with $\beta\in [\alpha]$ replacing $\alpha$. 

\begin{prop}\label{prop:Jproper}
If there is a metric $\chi\in \Omega$ satisfying
  $\Lambda_\chi\alpha = c$, then $\mathcal{J}_\alpha$ is proper. 
In addition if $\mathcal{J}_\alpha$ is proper, then $\mathcal{J}_\beta$ is
  proper. 
\end{prop}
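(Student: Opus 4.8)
The plan is to handle the two assertions separately: the first by reducing to the coercivity estimate of Song-Weinkove, and the second by a direct comparison of $\mathcal{J}_\alpha$ and $\mathcal{J}_\beta$.

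For the first assertion I would observe that a metric $\chi\in\Omega$ with $\Lambda_\chi\alpha = c$ is automatically a subsolution in the sense of \eqref{eq:BWcond}: at any point, diagonalizing $\alpha$ with respect to $\chi$ with eigenvalues $\mu_i>0$, $\sum_i\mu_i=c$, the $(n-1,n-1)$-form $c\chi^{n-1}-(n-1)\chi^{n-2}\wedge\alpha$ is diagonal in this frame, its coefficient along $\bigwedge_{j\neq k}\sqrt{-1}\,dz^j\wedge d\bar z^j$ being $(n-1)!\,\mu_k$, hence positive, and uniformly so by compactness of $M$. Thus \eqref{eq:BWcond} holds strictly for $\chi$, and the properness of $\mathcal{J}_\alpha$ is (essentially a restatement of) the coercivity of the $J$-functional established by Song-Weinkove~\cite{SW04} under precisely this hypothesis; I would simply invoke their result here.

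For the second assertion, write $\beta=\alpha+\ddbar f$ with $f\in C^\infty(M)$. First note that the constant $c$ of \eqref{eq:cdefn} depends only on the classes $\Omega$ and $[\alpha]=[\beta]$, since $\int_M\omega^{n-1}\wedge\alpha=\Omega^{n-1}\cdot[\alpha]$ and $\int_M\omega^n=\Omega^n$ for every $\omega\in\Omega$; hence $\mathcal{J}_\beta$ is built from the same constant $c$ as $\mathcal{J}_\alpha$. Subtracting the variational formulas for the two functionals gives
\[ \left.\frac{d}{dt}\right|_{t=0}(\mathcal{J}_\beta-\mathcal{J}_\alpha)(\omega+t\ddbar\phi)=n\int_M\phi\,(\beta-\alpha)\wedge\omega^{n-1}=n\int_M f\,\ddbar\phi\wedge\omega^{n-1}, \]
and integrating this along the linear path $\omega_s=\omega_0+s\ddbar\phi$, $s\in[0,1]$, using $\frac{d}{ds}\omega_s^n=n\,\ddbar\phi\wedge\omega_s^{n-1}$, produces the identity $\mathcal{J}_\beta(\omega)-\mathcal{J}_\alpha(\omega)=\int_M f\,\omega^n-\int_M f\,\omega_0^n$ for every $\omega=\omega_0+\ddbar\phi\in\Omega$, both functionals being normalized to vanish at $\omega_0$.

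Since $\int_M\omega^n=\int_M\omega_0^n=\Omega^n$ is fixed, the right-hand side is bounded in absolute value by $2\Vert f\Vert_{C^0}\,\Omega^n$ independently of $\omega$, so properness of $\mathcal{J}_\alpha$ with constants $C,\delta$ yields properness of $\mathcal{J}_\beta$ with constants $C+2\Vert f\Vert_{C^0}\Omega^n$ and the same $\delta$. I expect the only genuine obstacle to be the first assertion, which is real analysis but is available in \cite{SW04}; the second assertion is just the elementary "exact one-form" identity above, the only points requiring care being the invariance of $c$ and the consistent normalization of both functionals at the common base point $\omega_0$.
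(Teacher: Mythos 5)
Your proof of the second assertion is essentially the paper's: compute $\mathcal{J}_\beta-\mathcal{J}_\alpha$ from the variational formula, integrate by parts along the path $\omega_t=\omega_0+t\ddbar\phi$, and obtain that the difference is $\frac1n\int_M\psi(\omega_1^n-\omega_0^n)$ (your $\int_M f\,(\omega^n-\omega_0^n)$), which is uniformly bounded; properness transfers.

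The first assertion has a genuine gap. You correctly observe that $\Lambda_\chi\alpha=c$ implies the strict subsolution inequality \eqref{eq:BWcond}, and then invoke Song-Weinkove as having established ``coercivity'' of $\mathcal{J}_\alpha$ under that hypothesis. But what \cite{SW04} establishes under \eqref{eq:BWcond} is that $\mathcal{J}_\alpha$ is \emph{bounded below}; this is strictly weaker than the paper's ``proper,'' which additionally demands the coercive term $\delta\int_M\phi(\omega_0^n-\omega^n)$. The paper produces that extra term by a perturbation you omit: since $\Lambda_\chi\alpha=c$ exactly (not merely the subsolution inequality), the form $\alpha-\delta\chi$ is still K\"ahler for small $\delta>0$ and satisfies $\Lambda_\chi(\alpha-\delta\chi)=c-n\delta$, so Song-Weinkove's lower bound applies to $\mathcal{J}_{\alpha-\delta\chi}$. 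Writing $\omega_t=\chi+t\ddbar\phi$, $\omega=\omega_1$, one has, up to a constant,
\[
\mathcal{J}_\alpha(\omega)=\mathcal{J}_{\alpha-\delta\chi}(\omega)+n\delta\int_0^1\int_M\phi\,(\chi^n-\omega_t^n)\,dt,
\]
and the second term is precisely
\[
n\delta\int_0^1 t\int_M\sqrt{-1}\,\d\phi\wedge\bar\d\phi\wedge\bigl(\chi^{n-1}+\cdots+\omega_t^{n-1}\bigr)\,dt,
\]
which is the coercive piece required for properness. Without this step you obtain only a lower bound on $\mathcal{J}_\alpha$, not properness. In particular, the hypothesis that $\chi$ is an exact critical metric (rather than a mere subsolution) is used here in an essential way that your argument does not capture: it is exactly what lets one perturb $\alpha\rightsquigarrow\alpha-\delta\chi$ while preserving positivity and the critical equation.
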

\begin{proof}
  Suppose that we can find an $\chi$ such that $\Lambda_\chi\alpha
  = c$. For small $\delta > 0$ the form $\alpha - \delta\chi$ is
  positive, and we have $\Lambda_\chi(\alpha - \delta\chi) = c -
  n\delta$. Song-Weinkove~\cite{SW04} showed that in this case the
  corresponding functional $\mathcal{J}_{\alpha - \delta\chi}$ is
  bounded below. This
  functional is given, up to adding a constant, by
\[ \mathcal{J}_{\alpha - \delta\chi}(\omega) =
\mathcal{J}_\alpha(\omega) - n\delta\int_0^1\int_M \phi(\chi^n -
\omega_t^n), \]
where $\omega_t = \chi + t\ddbar\phi$ and $\omega=\omega_1$. It
follows that
\[ \mathcal{J}_\alpha(\omega) \geq -C + n\delta\int_0^1 t\int_M
\sqrt{-1}\d\phi\wedge\bar{\d}\phi \wedge(\chi^{n-1} + \ldots +
\omega_t^{n-1}), \]
which implies that $\mathcal{J}_\alpha$ is proper. 

If $\beta = \alpha + \ddbar\psi$, and $\omega_t$ is as above then we have
  \[ \begin{aligned}
    \mathcal{J}_{\beta}(\omega_1) - \mathcal{J}_{\alpha}(\omega_1) &=
    \int_0^1 \int_M\phi (\beta-\alpha)\wedge
      \omega^{n-1}_t \,dt \\
  &= \int_0^1 \int_M \psi \ddbar\phi\wedge \omega_t^{n-1}\,dt \\
  &= \frac{1}{n} \int_0^1 \int_M \psi \frac{d}{dt}\omega_t^n\,dt \\
 &= \frac{1}{n}\int_M \psi(\omega_1^n - \omega_0^n). 
  \end{aligned}\] 
It follows that $|\mathcal{J}_\beta - \mathcal{J}_\alpha| < C$ for
some constant depending on $\psi$, so if $\mathcal{J}_\alpha$ is
proper, then so is $\mathcal{J}_\beta$. 
\end{proof}

The proof of Theorem~\ref{thm:indep} is very similar to the above, but
simpler. For small $\kappa, \epsilon > 0$ we consider the operator
\[ F_{\kappa,\epsilon}(A) = \sum_{k=1}^n c_k S_k(A^{-1}) + \kappa
S_1(A^{-1}) - \epsilon S_n(A^{-1}). \]
We are assuming that $c_n > 0$, so that for sufficiently small
$\epsilon$ this operator will satisfy the structural conditions on the
space of all positive Hermitian matrices. In particular for any
initial metric $\omega_0\in \Omega$, the flow
\[ \frac{\d \phi_t}{\d t} = c_{\kappa, \epsilon} -
F_{\kappa,\epsilon}(A_t) \]
has a solution for all time, with $\omega_t = \omega_0 +
\ddbar\phi_t$, $\phi_0 = 0$, and $(A_t)^i_j = \alpha^{i\bar
  k}\omega_{t,j\bar k})$. The constant $c_{\kappa,\epsilon}$ is chosen
so that
\[ \int_M c_{\kappa,\epsilon}\,\omega^n = \int_M
F_{\kappa,\epsilon}(A)\,\omega^n. \]
The flow is the negative gradient flow of the
function $\mathcal{J}_{\kappa,\epsilon}$, defined by
\[ \left.\frac{d}{dt}\right|_{t=0}
\mathcal{J}_{\kappa,\epsilon}(\omega+ t\ddbar\phi) = \int_M
\phi(F_{\kappa,\epsilon}(A) - c_{\kappa,\epsilon})\,\omega^n, \]
normalized so that $\mathcal{J}_{\kappa,\epsilon}(\omega_0) = 0$. 

The following result is not stated in this generality in
Fang-Lai-Ma~\cite{FLM11}, but it follows using exactly the same
argument (see also Song-Weinkove~\cite{SW04}), together with the
perturbation method in the proof of Proposition~\ref{prop:Jproper}. 

\begin{prop}\label{prop:Jlowerbound}
  Suppose that there is a metric $\omega$ such that
  $F_{\kappa,\epsilon} (\alpha^{i\bar
    k}\omega_{j\bar k}) = c_{\kappa,\epsilon}$. Then
  $\mathcal{J}_{\kappa,\epsilon}$ is proper. 
\end{prop}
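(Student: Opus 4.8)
The plan is to follow the strategy of Song-Weinkove~\cite{SW04} adapted to the operator $F_{\kappa,\epsilon}$, combined with the perturbation trick already used in the proof of Proposition~\ref{prop:Jproper}. The starting observation is that $F_{\kappa,\epsilon}(A) = c_{\kappa,\epsilon}$ is, up to the small $\epsilon S_n(A^{-1})$ term, an equation of the form $\sum_k \tilde{c}_k S_k(A^{-1}) = c$ for which the Fang-Lai-Ma lower bound argument applies. So the first step is to absorb $\epsilon S_n(A^{-1})$: since $c_n > 0$, for $\epsilon$ small enough we have $c_n - \epsilon > 0$, and $F_{\kappa,\epsilon}(A) = \sum_{k<n} c_k S_k(A^{-1}) + \kappa S_1(A^{-1}) + (c_n-\epsilon) S_n(A^{-1})$ is again a nonnegative combination of inverse $\sigma_k$ operators. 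Thus $F_{\kappa,\epsilon}$ is of the same type as the operator $F$ in Equation~\eqref{eq:Fdefn}, with all the attendant structural properties from Lemma~\ref{lem:Fd}; in particular it is convex on all positive Hermitian matrices and the $C^2$-estimate machinery of Section~\ref{sec:C2} applies.

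Next I would invoke the perturbation step. Given that $\omega$ solves $F_{\kappa,\epsilon}(\alpha^{i\bar k}\omega_{j\bar k}) = c_{\kappa,\epsilon}$, for small $\eta > 0$ the form $\alpha' = \alpha - \eta\omega$ is still a K\"ahler metric, and replacing $\alpha$ by $\alpha'$ changes each $S_k(A^{-1})$ term in a controlled way: writing $(A')^i_j = (\alpha')^{i\bar k}\omega_{j\bar k}$ we have that $\omega$ is a \emph{strict} subsolution for the $\alpha'$-equation in the sense of \eqref{eq:BJpositivity}, exactly as in Fang-Lai-Ma. Then the Fang-Lai-Ma argument (which is the same $C^2$-estimate plus integration-by-parts computation underlying Proposition~\ref{prop:BenC2}) shows that the functional $\mathcal{J}^{\alpha'}_{\kappa,\epsilon}$ built from $\alpha'$ is bounded below along $\Omega$. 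One then expresses $\mathcal{J}_{\kappa,\epsilon}$ in terms of $\mathcal{J}^{\alpha'}_{\kappa,\epsilon}$: the difference is $\eta$ times a sum of mixed-mass terms $\int_0^1\int_M \phi\,(\text{something})\wedge\omega_t^{n-k-1}$, and after integrating by parts each such term contributes a nonnegative multiple of $\int_M \sqrt{-1}\,\d\phi\wedge\bar\d\phi\wedge(\text{positive }(n-2,n-2)\text{-form})$, as in the displayed computation in Proposition~\ref{prop:Jproper}. Collecting these gives
\[ \mathcal{J}_{\kappa,\epsilon}(\omega) \geq -C + \delta\int_M \sqrt{-1}\,\d\phi\wedge\bar\d\phi\wedge(\omega_0^{n-1} + \cdots + \omega^{n-1}), \]
which is precisely the properness statement.

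The main obstacle is making the perturbation step clean for the \emph{general} combination $\sum_k c_k S_k(A^{-1}) + \kappa S_1(A^{-1})$ rather than a single $S_k$: one must check that replacing $\alpha$ by $\alpha - \eta\omega$ still yields a strict subsolution of the mixed equation and that the resulting functional difference has a sign. Both of these are linear in the index $k$, so termwise the computation reduces to the single-$\sigma_k$ case handled by Fang-Lai-Ma, but one has to be careful that the constants $c_{\kappa,\epsilon}$ and $c^{\alpha'}_{\kappa,\epsilon}$ (determined by integrating over $M$) vary continuously in $\eta$ and that the strictness survives for $\eta$ small. A minor additional point is that the $\epsilon S_n$ term has the "wrong" sign, but since we folded it into the coefficient $c_n - \epsilon > 0$ this causes no trouble. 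Once the perturbed functional is bounded below, the quadratic gain in $\d\phi$ comes for free from the integration by parts, exactly as in Proposition~\ref{prop:Jproper}, so no new ideas are needed beyond bookkeeping.
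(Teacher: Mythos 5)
Your proposal follows the same strategy that the paper gestures at (the authors do not write out a proof; they simply point to Fang-Lai-Ma together with the perturbation trick of Proposition~\ref{prop:Jproper}). The rewriting $F_{\kappa,\epsilon}(A) = \sum_{k<n}(c_k+\kappa\delta_{k1})S_k(A^{-1}) + (c_n-\epsilon)S_n(A^{-1})$ is the right first move, and perturbing $\alpha\mapsto\alpha-\eta\omega$ in the direction of the solution is exactly the trick used in Proposition~\ref{prop:Jproper}. Two points deserve more care than your sketch gives them.

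First, you attribute the lower bound on $\mathcal{J}^{\alpha'}_{\kappa,\epsilon}$ to "the $C^2$-estimate plus integration-by-parts underlying Proposition~\ref{prop:BenC2}." That is a misattribution: Proposition~\ref{prop:BenC2} gives a priori estimates for a solution, not boundedness of the energy. The actual chain is: a strict subsolution $\Rightarrow$ existence of a solution (Theorem~\ref{thm:1}/FLM) $\Rightarrow$ $\mathcal{J}^{\alpha'}_{\kappa,\epsilon}$ has a critical point and is bounded below by the separate SW/FLM convexity-of-the-functional argument. Conflating these obscures where the boundedness really comes from.

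Second, you should flag that the perturbation step here genuinely differs from the one in Proposition~\ref{prop:Jproper}. There, $\chi$ remains an \emph{exact} solution of the perturbed equation because $S_1((A^{-1}-\eta I)) = S_1(A^{-1})-n\eta$ is still constant. For $k>1$, $S_k(A^{-1}-\eta I)$ is not constant when $S_k(A^{-1})$ is, so $\omega$ only becomes a strict subsolution of the $\alpha'$-equation (this holds for $\eta$ small because a smooth solution on compact $M$ satisfies $\widetilde F(A)\le c-\delta_0$ uniformly, and both sides perturb by $O(\eta)$). Likewise, the expansion $(\alpha-\eta\omega)^k-\alpha^k$ produces mixed terms $\alpha^{k-j}\wedge\omega^j$ for $j\ge 1$, so the identity relating $\mathcal{J}_{\kappa,\epsilon}$ to $\mathcal{J}^{\alpha'}_{\kappa,\epsilon}$ does not reduce to the single telescoping computation of Proposition~\ref{prop:Jproper}; the positivity after integrating by parts has to be checked termwise for each $k$ (this is precisely the content of the FLM lower-bound computation). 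Neither issue breaks the argument, but "exactly as in Proposition~\ref{prop:Jproper}, no new ideas needed" is a bit too glib.
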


\begin{proof}[Proof of Theorem~\ref{thm:indep}]
  We are assuming that there is a metric $\omega$ such that $F(A)=c$,
  where we are writing $F= F_{0,0}$ and $c= c_{0,0}$, and $A^i_j =
  \alpha^{i\bar k}\omega_{j\bar k}$. Using the implicit function
  theorem, we can also solve $F_{\kappa,\epsilon}(A) = c_{\kappa,
    \epsilon}$, for sufficiently small $\kappa,\epsilon > 0$, and this
  implies that $\mathcal{J}_{\kappa,\epsilon}$ is proper. Let us write
  $\mathcal{J}'_{\kappa,\epsilon}$ for the functional defined in the
  same way as $\mathcal{J}_{\kappa,\epsilon}$ but with $\alpha$
  replaced by a metric $\beta\in[\alpha]$. Just as in
  Proposition~\ref{prop:Jproper} we obtain that
  $\mathcal{J}'_{\kappa,\epsilon}$ is also proper. We can then use the
  negative gradient flow to obtain a sequence of metrics $\omega_k$,
  such that the matrices $B_k$ defined by $(B_k)^i_j = \beta^{i\bar
    p}\omega_{k,j\bar p}$ satisfy
  \[ \lim_{k\to\infty} \int_M (F_{\kappa,\epsilon}(B_k) - c_{\kappa,\epsilon})^2\,
  \omega_k^n = 0. \]
  In addition we have $F_{\kappa, \epsilon}(B_k) < C$ along the flow,
  for a uniform constant $C$, and so using that $F_{\kappa, \epsilon}(B_k)
  > \kappa S_1(B_k^{-1})$, we obtain a uniform lower bound $\omega_k > C^{-1}
  \beta$, where $C$ also depends on $\kappa$.

   Performing the same mollification argument as in the proof of
   Proposition~\ref{prop:properconverge}, for any $\delta > 0$ we can
   obtain a K\"ahler current $\chi\in[\omega]$ 
  with continuous local potentials, satisfying
  \[ F_{\kappa,\epsilon}(\beta^{i\bar p}\chi_{j\bar p}) \leq
  c_{\kappa,\epsilon} + \delta \]
  in the viscosity sense. We have
  \[ \int_M c_{\kappa,\epsilon}\omega^n = \int_M
  F_{\kappa,\epsilon},\omega^n = \int_M c\,\omega^n + \int_M \Big[\kappa
  S_1(A^{-1}) - \epsilon S_n(A^{-1})\Big]\,\omega^n, \]
  and so $c_{\kappa,\epsilon} = c + \kappa d_1 - \epsilon d_2$ for
  some positive constants $d_1,d_2 > 0$, so 
  \[ F_{\kappa,\epsilon}(\beta^{i\bar p}\chi_{j\bar p}) \leq
  c + \delta + \kappa d_1 - \epsilon d_2. \]
  Choosing $\kappa$ sufficiently small so that $\kappa d_1 - \epsilon
  d_2 < 0$, and then $\delta$ sufficiently small, we will have
  \[ F_{\kappa,\epsilon}(\beta^{i\bar p}\chi_{j\bar p}) \leq c -
  \delta', \]
  for some small $\delta' > 0$. 
  The definition of $F_{\kappa,\epsilon}$ then implies
  \[ \sum_{k=1}^{n-1} c_k S_k(B^{-1}) \leq c - \delta' \]
  in the viscosity sense, where $B^i_j = \beta^{i\bar p}\omega_{j\bar
    p}$. Theorem~\ref{thm:1visc} then implies that we can find a
  metric $\eta\in [\omega]$ satisfying $F(\beta^{i\bar p}\eta_{j\bar
    p}) = c$.  
\end{proof}

\section{Toric manifolds}\label{sec:toric}

In the remainder of this article we will work on a toric manifold
$M$. Our goal is to prove Theorem~\ref{thm:3}. By Theorems~\ref{thm:2}
and \ref{thm:indep}
it is sufficient to work with torus invariant metrics. We restate the
theorem here. 
\begin{thm}\label{thm:toric}
Let $M$ be a toric manifold, and $\alpha, \chi$ torus
invariant K\"ahler metrics on $M$.
  Suppose that $c > 0$ is such that
  \begin{equation}\label{eq:Mc}
\int_M c\chi^n - n\chi^{n-1}\wedge\alpha \geq 0, \end{equation}
  and for all toric subvarieties $V\subset M$ of dimension
  $p=1,2,\ldots, n-1$ we have
  \[ \int_V c\chi^p - p\chi^{p-1}\wedge\alpha > 0. \]
  Then there exists a torus invariant metric $\omega\in [\chi]$ such
  that
  \begin{equation}\label{eq:twisted} \Lambda_\omega\alpha +
    d\frac{\alpha^n}{\omega^n}= c 
    \end{equation}
  for some constant $d\geq 0$, depending on the choice of $c$. In particular either
  $\Lambda_\omega\alpha = c$ i.e. we have a solution of the
  J-equation,  or $\Lambda_\omega\alpha < c$, depending on whether we
  have equality in \eqref{eq:Mc}. 
\end{thm}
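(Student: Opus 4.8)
The plan is to prove Theorem~\ref{thm:toric} by induction on the dimension $n$, the case $n=1$ being elementary (there $\omega$ is forced to be a positive multiple of $\alpha$, and condition \eqref{eq:Mc} is precisely what makes the resulting constant $d$ nonnegative). Having reduced to torus invariant data, I would solve \eqref{eq:twisted} by the continuity method in the parameter $d$. For $d\geq 0$ let $c_d$ be the cohomological constant obtained by integrating $\Lambda_\omega\alpha+d\,\alpha^n/\omega^n=c_d$ over $M$ with $\omega\in[\chi]$; then $c_d$ is strictly increasing in $d$, $c_0$ is the constant of \eqref{eq:cdefn}, which is $\leq c$ by \eqref{eq:Mc}, and $c_d\to\infty$, so there is a unique $d_*\geq 0$ with $c_{d_*}=c$, and it suffices to solve $F_d(A)=c_d$ for $d=d_*$, where $F_d(A)=S_1(A^{-1})+d\,S_n(A^{-1})$ and $A^i_j=\alpha^{i\bar p}\omega_{j\bar p}$. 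The operator $F_d$ is convex in $A$, being a nonnegative combination of $S_1(A^{-1})$, convex by Lemma~\ref{lem:convex}, and $S_n(A^{-1})=(\det A)^{-1}=e^{-\log\det A}$, convex since $\log\det$ is concave. Openness of the set of solvable $d$ then follows from the implicit function theorem, and solvability for large $d$ from the fact that after dividing by $d$ the equation is a small perturbation of the Monge-Amp\`ere equation $\alpha^n/\omega^n=\mathrm{const}$, solvable by Yau~\cite{Yau78}. Everything thus reduces to closedness of the continuity path, i.e.\ to uniform a priori estimates for $d$ in a bounded interval.

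The estimates have three ingredients. A uniform $C^0$ bound on the potential $\phi$, with $\omega=\chi+\ddbar\phi$, I would obtain by the usual toric argument, reducing it to control of the associated convex function on $\mathbf{R}^n$ via convexity and the fixed moment polytope of $[\chi]$. From the equation we have $S_1(A^{-1})\leq c_d$, so the eigenvalues of $A$ are automatically bounded below, i.e.\ $\omega\geq C^{-1}\alpha$; the content of the second order estimate is therefore the upper bound $\Lambda_\alpha\omega\leq C$, which I would prove in two regions. Near the union $D\subset M$ of the toric divisors I would use the induction hypothesis: each toric divisor $V$ is a compact toric manifold of dimension $n-1$ to which $\alpha,\chi$ restrict, and the hypotheses of the theorem in dimension $n-1$ hold on $V$ for the same $c$ --- this is the only place the conditions $\int_V c\chi^p-p\chi^{p-1}\wedge\alpha>0$ for toric subvarieties $V\subset M$ with $p\leq n-1$ are used, since the toric subvarieties of $V$ are among those of $M$ (and the strictness of these conditions is what makes the resulting twist parameter on $V$ positive). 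The resulting solutions on the divisors (each solving its own twisted equation, which is why the more general twisted statement is needed for the induction), suitably extended in the normal directions, can be glued by a maximum construction into a continuous subsolution of $F_d(A)=c_d$ on a neighbourhood $U$ of $D$, in the viscosity sense of Definition~\ref{defn:viscosity}, using Lemma~\ref{lem:maxviscosity} as in the proof of Proposition~\ref{prop:properconverge}; applying Proposition~\ref{prop:BenC2} on $U$, with $\sup_{\partial U}\Lambda_\alpha\omega$ controlled by the interior estimate below, then bounds $\Lambda_\alpha\omega$ near $D$. On compact subsets of the open orbit $M\setminus D\cong(\mathbf{C}^*)^n$, torus invariance turns \eqref{eq:twisted} into ${\rm Tr}((D^2u)^{-1}D^2v)+d\,\det(D^2v)/\det(D^2u)=c$ for a convex $u\colon\mathbf{R}^n\to\mathbf{R}$, with $v$ the fixed potential of $\alpha$, and the required bound is an interior upper bound on $D^2u$, which is Proposition~\ref{prop:interior}.

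Proving Proposition~\ref{prop:interior} is the technical heart of the argument. After a linear normalization I may take $D^2v=I$ and $c=1$ in the model case, and I would pass to the Legendre transform $h=u^*$: then $(D^2u)^{-1}=D^2h$ at dual points, so an interior upper bound on $D^2u$ is equivalent to an interior lower bound on $D^2h$, and the equation becomes $\Delta h+d\det(D^2h)=1$ (with variable coefficients in general). By the constant rank theorem of Bian-Guan~\cite{BG09}, this lower bound --- equivalently, strict convexity of $h$ --- reduces to an a priori interior $C^{2,\alpha}$ estimate for convex solutions of $\Delta h+d\det(D^2h)=1$. The operator $M\mapsto{\rm Tr}(M)+d\det(M)$ is neither concave nor convex, and its level sets $\{\det(M)=1-t\}$ fail to be uniformly convex as $t\to1$, so neither Evans-Krylov nor the results of Caffarelli-Yuan~\cite{CY00} apply directly. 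The key point will be the lemma that $w=\det(D^2h)^{1/n}$ is a supersolution of the linearized equation $Lv:=(\delta^{ij}+d\,\mathrm{cof}(D^2h)^{ij})\partial_{ij}v=0$ --- a direct computation resting on the concavity of $\det^{1/n}$ --- which allows the Caffarelli-Yuan technique to be carried out with this supersolution in place of uniform convexity of the level sets, yielding the $C^{2,\alpha}$ estimate. Unwinding the Legendre transform then gives Proposition~\ref{prop:interior}.

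Once the global bound $C^{-1}\alpha\leq\omega\leq C\alpha$ is established, the equation is uniformly elliptic with $F_d$ convex, so Evans-Krylov together with Schauder theory yields uniform $C^k$ bounds for all $k$; this closes the continuity method and produces a torus invariant $\omega\in[\chi]$ solving $\Lambda_\omega\alpha+d_*\,\alpha^n/\omega^n=c$. If equality holds in \eqref{eq:Mc} then $c=c_0$, forcing $d_*=0$, so $\Lambda_\omega\alpha=c$, and together with the elementary converse this gives Conjecture~\ref{conj:LSz} on toric manifolds. I expect the main obstacle to be the last step in the proof of Proposition~\ref{prop:interior}: running the Caffarelli-Yuan argument for the non-concave model equation using the supersolution $\det(D^2h)^{1/n}$ of its linearization. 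The barrier construction near $D$, and the verification that the divisor data satisfies the inductive hypotheses, are the other delicate points.
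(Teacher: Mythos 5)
Your proposal is correct and follows essentially the same route as the paper: the same continuity path (you index by $d$ where the paper indexes by $c$, but the two parametrizations are in bijection via the cohomological constraint), the same use of the induction hypothesis on the toric divisors to build a viscosity subsolution near $D$ and reduce to interior estimates via Proposition~\ref{prop:BenC2}, and the same strategy for the interior $C^2$ bound — Legendre transform, constant rank theorem of Bian--Guan, and a Caffarelli--Yuan-type argument driven by the observation that $\det(D^2h)^{1/n}$ is a supersolution of the linearized operator. The only cosmetic difference is that you propose deducing the global $C^2$ bound from a direct toric $C^0$ bound on the potential (via the fixed moment polytope), whereas the paper instead applies Weinkove's estimate to pass from $\Lambda_\alpha\omega<Ce^{N(\phi-\inf\phi)}$ to a uniform bound; both are standard and interchangeable here.
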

\begin{proof}
  The proof proceeds by induction on the dimension of $M$, the result
  being straight forward when $\dim M = 1$. Let us assume that we
  already know the result for dimensions less than $n$, and suppose that $\dim M =
  n$. We solve Equation~\eqref{eq:twisted}, by the continuity method just as
  in the proof of Theorem~\ref{thm:1}. 

  Using Yau's theorem, we can find $\omega\in[\chi]$ such that
  $\frac{\alpha^n}{\omega^n}$ is constant, which corresponds to the
  limit $c\to\infty$. From the implicit function theorem it then
  follows that there is some $c'$, such that we can solve
  Equation~\eqref{eq:twisted} for all $c\in (c', \infty)$ with $d$
  depending on $c$. Define $c_0$ to
  be the infimum of all such $c'>0$. Our goal is to show that if $c_0
  \geq c$, and $c_k \to c_0$, then we have uniform estimates
  $C^{k,\alpha}$ estimates for the solutions $\omega_k$ of the
  equations
  \[ \Lambda_{\omega_k}\alpha + d_k \frac{\alpha^n}{\omega^n} = c_k. \] 
  Let us denote by $D = \cup_{i=1}^N D_i$ the union of all toric divisors on
  $M$.

  We use the inductive hypothesis to build a suitable subsolution for
  the equation in a neighborhood of $D$.
  For each $D_i$, the inductive hypothesis implies that there is some
  $d_i > 0$, and a form $\omega_i\in [\chi]$, such that the
  restriction of
  $\omega_i$ to $D_i$ is positive, and satisfies
  \[ \Lambda_{\alpha|_{D_i}}\omega_i|_{D_i} + d_i
  \frac{\alpha|_{D_i}^{n-1}}{\omega_i|_{D_i}^{n-1}} = c. \] 
 We define
  \[ \chi_i = \omega_i + A\ddbar(\gamma(d_i) |d_i|^2), \]
  where $d_i$ denotes the distance from $D_i$, $A$ is a large
  constant, and $\gamma:\mathbf{R}\to\mathbf{R}$ is a cutoff function
  supported near 0. If $A$ is chosen sufficiently large, then $\chi_i$
  will be positive in a small neighborhood $U_i$ of $D_i$ and will
  satisfy  $\Lambda_\alpha \chi_i < c - \kappa$ for some small $\kappa
  > 0$. 

  Fixing a reference form $\omega_0$, we can write $\chi_i = \omega_0
  + \ddbar\psi_i$ for each $i$. Consider now the functions
  \[ \widetilde{\psi}_i = \psi_i -B_i +  \delta\sum_{j< i}
  \gamma(d_j)\log d_j, \]
  where $B_i, \delta > 0$ are constants, and let $\widetilde{\chi}_i = \omega_0 +
  \ddbar\widetilde{\psi}_i$. 
  We can choose $\delta$ sufficiently small, so that on a neighborhood
  of $D_i\setminus
  \cup_{j < i}D_j$, the form $\widetilde{\chi}_i$ is positive
  definite and satisfies $\Lambda_{\widetilde{\chi}_i}\alpha <
  c-\kappa$.  We choose $B_i$ inductively, for $i=N, N-1, \ldots, 1$,
  starting with $B_N=0$, so that on $D_j$, for all $j > i$ we have
  $\psi_i - B_i < \psi_j - B_j$. 

  Suppose that $x\in M$ is in a neighborhood $V$ of an intersection
  $D_{i_1}\cap \ldots \cap D_{i_k}$, where $i_1 <\ldots < i_k$, and
  suppose that
  \[ \widetilde{\psi}_a(x) = \max_j \widetilde{\psi}_j(x). \]
  If $V$ is sufficiently small, then we must have $a\leq i_1$, since
  $\widetilde{\psi}_j = -\infty$ along $D_i$ for $i < j$. Together
  with the choice of the $B_i$, this implies that we must have
  $a=i_1$, if the neighborhood $V$ is sufficiently small. We can also
  assume that $V$ is disjoint from $D_j$ for $j < i_1$, so that we
  have $\Lambda_{\widetilde{\chi}_{i_1}}\alpha < c-\kappa$ on
  $V$. Using Lemma~\ref{lem:maxviscosity} 
   we have that on a sufficiently small neighborhood $U$ of $D =
  \bigcup_i D_i$ the K\"ahler current $\chi = \omega_0 + \max
  \widetilde{\psi}_i$ satisfies $\widetilde{F}(\alpha^{i\bar
    p}\chi_{j\bar p}) \leq c-\kappa$ in the viscosity sense, where
  $F(A) = S_1(A^{-1})$. We can therefore apply 
  Proposition~\ref{prop:BenC2}, reducing $C^2$-esimates on $U$ to the
  boundary $\d U$. 

  In Proposition~\ref{prop:interior} below, we will show that we have
  uniform $C^{l,\alpha}$ estimates for the $\omega_k$ outside the
  neighborhood $U$ of $D$, and so we can apply Proposition~\ref{prop:BenC2} to the
      closure of $U$ to obtain bounds of the form
      \[ \Lambda_\alpha \omega_k < Ce^{N(\phi_k - \inf\phi_k)} \]
      on $U$, where $\omega_k = \omega_0 + \ddbar\phi_k$. Since we already have
      estimates outside of $U$, the same inequality is true globally on
      $M$. Just as in the proof of Theorem~\ref{thm:1}, we can use
      Weinkove~\cite[Proposition 4.2]{Wei04} to obtain the estimate
      $\Lambda_\alpha\omega_k < C$, and as in the proof of
      Theorem~\ref{thm:1visc}
      the Evans-Krylov theorem and Schauder estimates can
      be used to obtain higher order estimates. 
\end{proof}

\subsection{Interior estimates}
In this section we work on a toric manifold $M$ with torus invariant
K\"ahler metrics $\alpha, \omega$, satisfying the equation
\begin{equation}\label{eq:toriceq}
  S_1(A) + dS_n(A) = c, \end{equation} 
where $A^i_j = \omega^{i\bar p}\alpha_{j\bar p}$, and $d, c$ are
non-negative constants. 
Our goal is to obtain estimates for
$\omega$ in terms of $\alpha, c, d$, away from the torus invariant
divisors of $M$. 

\begin{prop}\label{prop:interior}
  Suppose that $\alpha,\omega$ are torus invariant metrics on $M$
  satisfying Equation~\eqref{eq:toriceq}.
  Then on any compact set
  $K\subset M$ disjoint from the torus invariant divisors, we have
  bounds
  \[ \begin{aligned} \omega &> C^{-1}\alpha, \\
    \Vert\omega\Vert_{C^{2,\alpha}} &< C,
  \end{aligned}\]
  for $C$ depending on $M, \alpha$, bounds on $c,d$, and the K\"ahler class $[\omega]$. 
\end{prop}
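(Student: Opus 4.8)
The plan is to pass to the logarithmic coordinates on the open orbit $(\mathbf{C}^*)^n\subset M$, where torus invariance means both metrics are given by convex functions on $\mathbf{R}^n$. Writing $\alpha = \ddbar v$ and $\omega = \ddbar u$ for convex $v,u:\mathbf{R}^n\to\mathbf{R}$ (with $v$ smooth and uniformly convex on the relevant region, since $\alpha$ is a fixed smooth metric), the matrix $A^i_j = \omega^{i\bar p}\alpha_{j\bar p}$ becomes $(D^2u)^{-1}D^2v$, and Equation~\eqref{eq:toriceq} reads
\[ \mathrm{Tr}\left((D^2u)^{-1}D^2v\right) + d\,\frac{\det D^2 v}{\det D^2 u} = c. \]
After the standard change of variables normalizing $v$ to the Euclidean norm (or, more precisely, working in coordinates adapted to $v$ at each point and absorbing the smooth, uniformly elliptic coefficients coming from $D^2v$ and its derivatives), this is of the form $\mathrm{Tr}((D^2u)^{-1}) + d/\det(D^2u) = 1$ with variable coefficients, exactly as advertised in the introduction. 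The fact that $K$ is disjoint from the toric divisors means it is contained in a fixed compact region of the moment polytope, which translates into $u$ being defined on a suitable neighborhood; the interior estimates are then genuinely interior estimates for this convex Monge--Amp\`ere--type equation.

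The first step is the $C^0$/$C^1$ control: on $K$ one has a priori bounds on $u$ and $Du$ coming from the fixed K\"ahler class $[\omega]$ (the gradient image of $u$ is the moment polytope, which is fixed), so $\omega$ restricted to $K$ is comparable to a fixed metric up to the as-yet-unknown degeneration of $D^2u$. The second and main step is the upper bound $D^2u \le C$ on $K$ (equivalently $\omega < C\alpha$, i.e. a lower bound on the eigenvalues of $A$); combined with the equation~\eqref{eq:toriceq} this immediately gives a lower bound on $D^2u$, hence $\omega > C^{-1}\alpha$. The route to the upper bound, as the introduction indicates, is via the Legendre transform: set $h = u^*$, so that $D^2h = (D^2u)^{-1}$ after the coordinate change, and the equation becomes $\Delta h + d\det(D^2h) = 1$ (again up to variable coefficients). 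A bound $D^2u \le C$ is equivalent to a \emph{lower} bound $D^2 h \ge C^{-1} > 0$, i.e. uniform convexity of $h$; the constant rank theorem of Bian--Guan~\cite{BG09} applies to the concavity structure of $\det^{1/n}$ and shows that the set where $D^2h$ degenerates is well-behaved, and one runs a maximum-principle argument on the smallest eigenvalue of $D^2h$ using that $\det(D^2h)^{1/n}$ is a supersolution of the linearized operator, which is where the Caffarelli--Yuan~\cite{CY00} techniques enter to yield the interior $C^{2,\alpha}$ bound for $h$, hence for $u$.

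The final step is bookkeeping: once $C^{-1}\alpha < \omega < C\alpha$ is known on a slightly larger compact set, the operator $S_1(A) + dS_n(A)$ restricted to the relevant range of eigenvalues is uniformly elliptic, and although $M\mapsto \mathrm{Tr}(M) + d\det(M)$ is neither convex nor concave, the interior $C^{2,\alpha}$ estimate obtained above for the Legendre-dual equation transfers back (the Legendre transform is a diffeomorphism with bounded derivatives on the region where $D^2u$ is two-sided bounded), giving $\|\omega\|_{C^{2,\alpha}} < C$ on $K$. The main obstacle is precisely the non-concavity of the operator: standard Evans--Krylov does not apply, and the degeneracy of the level sets $\{\det M = 1-t\}$ as $t\to 1$ means Caffarelli--Yuan is not directly applicable either, so the crux of the work is the observation that $\det(D^2h)^{1/n}$ is a supersolution of the linearized equation, which repairs the missing concavity and lets the Caffarelli--Yuan machinery run. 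Everything else — the reduction to convex functions, the $C^0$/$C^1$ bounds, the Legendre transform, and the passage back — is routine given the toric structure.
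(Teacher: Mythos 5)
Your proposal has the right ingredients (Legendre transform, the supersolution $\det(D^2h)^{1/n}$ of the linearized operator, the Caffarelli--Yuan machinery, the Bian--Guan constant rank theorem), but the logical structure around the crucial upper bound on $D^2u$ is muddled and, as written, would not close.

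The specific gap is your claim that "one runs a maximum-principle argument on the smallest eigenvalue of $D^2h$ using that $\det(D^2h)^{1/n}$ is a supersolution of the linearized operator" to obtain $D^2h \ge C^{-1}$. A supersolution ($Lf\le 0$) attains its \emph{minimum} on the boundary of the domain, and there is no boundary control available here: the whole point is to produce an \emph{interior} lower bound, so the maximum principle gives nothing. Likewise, the constant rank theorem by itself is qualitative --- it says that if $D^2h$ degenerates at a point it degenerates everywhere --- and does not produce a quantitative lower bound on the smallest eigenvalue of a fixed solution. The paper separates the two steps you are trying to merge. First, Proposition~\ref{prop:C2alpha} yields interior $C^{2,\alpha}$ estimates for $h$ \emph{without} any strict convexity hypothesis (the a priori bound $0\le D^2h\le C$ follows from convexity and the equation, and the supersolution property of $\det(D^2h)^{1/n}$ substitutes for the missing concavity in the Caffarelli--Yuan iteration; the variable-coefficient case is then handled by a blow-up to the constant-coefficient Liouville theorem, Lemma~\ref{lem:Liouville}). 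Second, the upper bound on $D^2g$ (your $D^2u$) is proved by contradiction and compactness: if $|D^2g_k|\to\infty$ along a sequence, one takes Legendre transforms $h_k$, uses Lemma~\ref{lem:image} to fix a common domain, invokes the already-established uniform $C^{2,\alpha}$ bounds on the $h_k$ to extract a $C^{2,\alpha}$ limit $h_\infty$, and then argues: either $D^2h_\infty$ is uniformly positive (contradicting the blow-up of $D^2g_k$), or $D^2h_\infty$ degenerates somewhere, in which case Bian--Guan forces $\det D^2h_\infty\equiv 0$, contradicting the fact that the gradient image of $h_k$ on a fixed ball has fixed positive Lebesgue measure (the Monge--Amp\`ere mass of $h_\infty$ cannot vanish). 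Your sketch also omits the domain-control step (Lemma~\ref{lem:image}) for the Legendre transforms, which is needed to make the compactness argument meaningful, and asserts without justification that Bian--Guan "applies to the concavity structure of $\det^{1/n}$"; the actual verification in the paper is that $A\mapsto F(A^{-1})$ is convex (Lemma~\ref{lem:convex}), which is what Theorem~1.1 of \cite{BG09} requires. In short: the supersolution property gives the regularity, not the non-degeneracy; the non-degeneracy requires a separate compactness argument that you haven't supplied.
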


The proof of this proposition will occupy the rest of this section. 
To obtain these estimates, we write our equation in terms of convex
functions on $\mathbf{R}^n$, corresponding to the dense complex torus
$\mathbf{R}^n\times (S^1)^n$
in $M$. Suppose that $\alpha = \ddbar f$ and $\omega = \ddbar g$,
where $f, g: \mathbf{R}^n\to \mathbf{R}$ are convex. We are assuming
that $f, g$ satisfy the equation
\[ S_1(A) + dS_n(A) = c, \]
where $A^i_j = g^{ip}f_{jp}$. The function $f$ is fixed, and we want to derive
estimates for the function $g$ on compact sets $K\subset
\mathbf{R}^n$. By adding an affine linear function to $g$, we can assume that
$g(0)=0$ and $\nabla g(0)=0$.

\begin{lemma} For any compact $K\subset \mathbf{R}^n$
  there exists a $C > 0$ such that $\sup_K|g| < C$.
\end{lemma}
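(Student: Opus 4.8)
The plan is to treat the lower and upper bounds separately; notably, neither part uses the equation \eqref{eq:toriceq}, only the convexity of $g$ together with the fact that $\omega=\ddbar g$ is an honest K\"ahler metric on the \emph{compact} toric manifold $M$ lying in the fixed class $[\omega]$.

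First, the lower bound is immediate. Since $g$ is convex with $g(0)=0$ and $\nabla g(0)=0$, we have $g(x)\ge g(0)+\langle \nabla g(0),x\rangle = 0$ for every $x\in\mathbf{R}^n$, so in particular $\inf_K g\ge 0$.

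For the upper bound the key step is a uniform gradient bound $|\nabla g|\le R$ on all of $\mathbf{R}^n$, with $R$ depending only on $[\omega]$. This is where the toric structure enters: by Guillemin's description of toric K\"ahler metrics, the restriction of the moment map of $(M,\omega)$ to the dense orbit $\mathbf{R}^n\times(S^1)^n$ is (up to the fixed dimensional constant in our normalization of $\ddbar$) the map $x\mapsto \nabla g(x)$, whose image is the interior of the moment polytope $P$ attached to the class $[\omega]$; in particular $\nabla g(\mathbf{R}^n)$ is contained in a bounded convex set determined by $[\omega]$. Because our normalization forces $0=\nabla g(0)\in\overline{\nabla g(\mathbf{R}^n)}$, we get $|\nabla g(x)|=|\nabla g(x)-0|\le \mathrm{diam}\,P=:R$ for all $x$, and $R$ is unaffected by the translation of $P$ induced by the normalization.

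Finally, given this bound, for $x\in K$ I integrate $\nabla g$ along the segment from $0$ to $x$:
\[
g(x)=g(x)-g(0)=\int_0^1 \langle \nabla g(tx),x\rangle\,dt \le R\,|x| \le R\sup_{y\in K}|y| =: C,
\]
and combined with $g\ge 0$ this yields $\sup_K|g|\le C$, where $C$ depends only on $K$ and $[\omega]$. The only non-elementary ingredient is the identification of the image of $\nabla g$ with the fixed moment polytope of $[\omega]$; once that is available the estimate is a one-line convexity argument, so this recollection from toric geometry is the main (and only mild) obstacle.
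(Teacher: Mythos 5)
Your proof is correct and follows essentially the same route as the paper: identify the image of $\nabla g$ with the (bounded) moment polytope $P$ of $[\omega]$, use the normalization to place $0$ in $\overline{P}$ so that $|\nabla g|$ is bounded by a constant depending only on $[\omega]$, and then integrate along rays from the origin; the lower bound $g\ge 0$ is immediate from convexity. The paper states this more tersely ("the result follows immediately") but the content is identical.
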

\begin{proof}
  The image of $\nabla g$ is a convex polytope $P$, determined by the
  K\"ahler class of $\omega$ up to
  translation by adding affine linear functions to $g$. Our
  normalization ensures that $0\in P$, and in particular we get a
  gradient bound on $g$. The result follows immediately. 
\end{proof}

We next prove a $C^2$-estimate for $g$ on compact sets by a
contradiction argument.
\begin{prop} Suppose that $f,g : B\to\mathbf{R}$ are convex functions
  on the unit ball, satisfying
  \[ f_{ij}g^{ij} + d \frac{\det(D^2 f)}{\det(D^2 g)} = c, \]
  with $d\geq 0$ as above, and suppose $\inf_B g=g(0)=0$.
  Then there is a $C> 0$ depending on $\sup_B |g|$, bounds on $c,d$, 
  $C^{3,\alpha}$ bounds on $f$ and a lower bound on the Hessian of $f$, such that
  \[ \sup_{\frac{1}{2}B} |g_{ij}| < C. \]
\end{prop}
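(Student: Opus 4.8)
===

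\textbf{Overall strategy.} The plan is to argue by contradiction and blow-up. Suppose the conclusion fails; then there is a sequence of pairs $(f_k, g_k)$ on $B$, with uniformly good bounds on $f_k$ (in particular $C^{3,\alpha}$ bounds and a uniform positive lower bound on $D^2 f_k$) and with $\sup_B|g_k|$ bounded, satisfying $S_1(A_k) + d_k S_n(A_k) = c_k$ with $c_k, d_k$ bounded, but with $M_k := \sup_{\frac12 B}|D^2 g_k| \to \infty$. The goal is to rescale near a point where the Hessian is large, extract a limit, and derive a contradiction with a Liouville-type / constant-rank statement. Since the $g_k$ are convex with uniformly bounded $C^0$ norm and bounded gradients (the gradient image is a fixed polytope up to translation, as in the previous lemma), we have good compactness for the $g_k$ themselves; the issue is entirely the second derivatives.

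\textbf{Reduction to the model equation.} The key structural observation recorded in the introduction is that, via the Legendre transform, the equation $S_1(A)+dS_n(A)=c$ for convex $g$ is equivalent to an equation of the form $\Delta h + d'\det(D^2 h) = 1$ (after normalizing constants) for the Legendre transform $h$ of $g$, with variable coefficients coming from $f$. So I would first pass to the Legendre transforms $h_k$ of $g_k$; the uniform $C^0$ and gradient bounds on $g_k$ translate into control of the domain and range of $h_k$, and a uniform upper bound on $D^2 g_k$ at some point is equivalent to a uniform lower bound on $D^2 h_k$, i.e. to $h_k$ being uniformly convex there. The blow-up is then performed on the $h_k$: pick points $x_k$ where the relevant quantity degenerates, translate so $x_k \mapsto 0$, and dilate by the appropriate factor $M_k$ (parabolically matched to the two terms $\Delta h$ and $\det D^2 h$). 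Because the coefficients of $f_k$ are $C^{3,\alpha}$ and converge (after passing to a subsequence) in $C^{2}_{loc}$ to a limit with positive-definite Hessian, the rescaled equations converge to the constant-coefficient model $\Delta h_\infty + d_\infty \det(D^2 h_\infty) = 1$ on all of $\mathbf{R}^n$.

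\textbf{The interior $C^{2,\alpha}$ estimate for the model.} The heart of the matter — and the step I expect to be the main obstacle — is the a priori interior $C^{2,\alpha}$ estimate for convex solutions of $\Delta h + d\det(D^2 h)=1$, which the operator $M\mapsto \mathrm{Tr}(M) + d\det(M)$ being neither concave nor convex prevents us from getting from Evans--Krylov, and which the non-uniform convexity of the level sets $\{\det M = 1-t\}$ as $t\to 1$ prevents us from getting directly from Caffarelli--Yuan. The route indicated in the introduction is: show that $\det(D^2 h)^{1/n}$ is a supersolution of the linearized equation (differentiating the equation twice and using concavity of $\det^{1/n}$ together with the sign of the coefficients), so that the quantity $\det(D^2 h)^{1/n}$ cannot drop too fast; combined with the constant rank theorem of Bian--Guan this forces enough uniform convexity of $h$ that the Caffarelli--Yuan technique applies on the nondegenerate part, yielding the $C^{2,\alpha}$ estimate. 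The upshot is that in the blow-up limit $h_\infty$ is a global convex solution of the model equation on $\mathbf{R}^n$ with at least quadratic growth control, hence by a Liouville-type argument (Jörgens--Calabi--Pogorelov in the pure Monge--Ampère case, extended here using the same supersolution/constant rank input) $h_\infty$ must be a quadratic polynomial — contradicting the fact that, after the scaling, $D^2 h_\infty$ has a prescribed degeneracy or normalization at the origin built into the rescaling. This contradiction proves the proposition.

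\textbf{Bookkeeping.} Finally I would check the two cases $d=0$ and $d>0$ uniformly: when $d_k\to 0$ the limiting equation is simply $\Delta h_\infty = 1$ (or, on the $g$ side, $S_1(A_\infty)=c$), which is linear with constant coefficients in the blow-up, and the contradiction is immediate from interior Schauder estimates and the classical Liouville theorem for harmonic-type functions; the case $d_k$ bounded away from $0$ is the genuinely nonlinear one handled above; intermediate behaviour is covered since all estimates are uniform in $d\in[0,D]$. Translating the contradiction back from $h_k$ to $g_k$ via the Legendre transform then gives $\sup_{\frac12 B}|g_{ij}| < C$ with $C$ depending only on the stated data.
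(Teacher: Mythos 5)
Your setup (contradiction, uniform lower bound on $D^2 g_k$ from the equation, Legendre transform, appeal to the interior $C^{2,\alpha}$ estimate for $\Delta h + d\det D^2h = 1$, and the constant rank theorem of Bian--Guan) matches the paper's ingredients, but the final step as you describe it does not close the contradiction, and the blow-up you interpose is both unnecessary and ineffective. Parabolic dilation $\widetilde h(y) = \lambda^{-2}h(\lambda y)$ leaves $D^2\widetilde h(y) = D^2h(\lambda y)$ unchanged, so rescaling cannot ``normalize'' the degenerating Hessian of $h_k$ at the bad points: whatever degeneracy the $D^2h_k$ have is carried over verbatim into the limit, and you end up with a global convex solution $h_\infty$ of the constant-coefficient model whose Hessian may simply be degenerate somewhere. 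Your proposed contradiction --- Liouville forces $h_\infty$ to be a quadratic polynomial, ``contradicting the prescribed degeneracy at the origin'' --- fails because a quadratic polynomial is perfectly allowed to have a degenerate Hessian: for instance $h(y)=\tfrac12 y_1^2$ solves $\Delta h + d\det D^2h = 1$ for any $d$ in $n\geq 2$, is a global convex solution, and is degenerate everywhere. So Liouville alone gives you nothing.

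What the paper actually does is skip the dilation entirely: after the Legendre transform, the $h_k$ already live on a fixed domain $\nabla g(0.8B)$, and the interior $C^{2,\alpha}$ estimate (plus Schauder) gives compactness there, producing a limit $h_\infty$ on that fixed domain, not on all of $\mathbf{R}^n$. The dichotomy is then: either $D^2h_\infty$ has a positive lower bound on the fixed domain, which immediately transfers to an upper bound on $D^2g_k$ by Lemma~\ref{lem:image}, contradicting $|D^2g_k(x_k)|\to\infty$; or $D^2h_\infty$ is degenerate at one point, and then Bian--Guan's constant rank theorem forces it to be degenerate on the whole connected domain, hence $\int_{\nabla g(0.8B)}\det(h_{\infty,ij})=0$. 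The contradiction is a volume count: $\nabla h_k$ is the inverse of $\nabla g_k$, so $\int_{\nabla g_k(0.7B)}\det(h_{k,ij})=\mathrm{Vol}(0.7B)$, and since $\nabla g_k(0.7B)\subset\nabla g(0.8B)$ and $h_k\to h_\infty$ in $C^{2,\alpha}$, the left side converges to something $\leq\int_{\nabla g(0.8B)}\det(h_{\infty,ij})=0$, which is absurd. This geometric volume argument is the essential missing idea in your proposal. (The Liouville theorem does appear in the paper, but only inside the proof of Proposition~\ref{prop:C2alpha} to reduce variable coefficients to the constant-coefficient model; it plays no role in closing the $C^2$ estimate itself.)
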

\begin{proof}
  We can assume that $c=1$ by scaling $f$. 
  We argue by contradiction. Suppose that we have sequences $f_k, g_k$
  satisfying the hypotheses, including $|g_k| < N$, but $|\partial^2 g_k(x_k)| > k$ for some
  $x_k\in \frac{1}{2}B$. Note that the equation implies that
  \[ g_{k,ij} > f_{k, ij} > \tau\delta_{ij}, \]
  for some fixed $\tau > 0$, i.e. we have a uniform lower bound on the Hessians of the $g_k$.

  Let $h_k : U_k \to \mathbf{R}$ be the Legendre transform of
  $g_k$. By shrinking the ball a bit, we can assume that $g_k \to g$
  uniformly for some strictly convex $g: B\to \mathbf{R}$. Lemma~\ref{lem:image}
  below implies that for sufficiently large $k$ we have $U_k \supset
  \nabla g(0.9B)$, and so $\nabla g(0.8B)$ is of a definite distance
  from $\partial U_k$ for large $k$. In addition, $h_k$
  satisfies the equation 
  \begin{equation}\label{eq:Legtransf}
    \sum_{i,j} f_{k, ij}(\nabla h_k(y)) h_{k, ij}(y) + d_k \det(D^2 f_k(\nabla h_k))
    \det(D^2 h_k(y))= 1.
  \end{equation}
  In addition from the
  normalization we get
  $h_k(0) = \nabla h_k(0) =0$. We use Proposition~\ref{prop:C2alpha} below together with
  the Schauder estimates, to obtain
  uniform $C^{l,\alpha}$ bounds on each $h_k$, on $\nabla f(0.8B)$, so 
  we can take a limit $h_\infty : \nabla f(0.8B) \to \mathbf{R}$, satisfying an equation
  of the form
  \[ \sum_{ij} f_{\infty, ij}(\nabla h_\infty(y))h_{\infty, ij}(y) +
  d_\infty \det(D^2f_\infty(\nabla h_\infty(y)))  \det(D^2 h_\infty(y))
      =
  1. \]
  There are two cases:
  \begin{enumerate}
  \item We have a positive lower bound on the Hessian of
    $h_{\infty}$. This implies a lower bound on $\mathrm{Hess}\, h_k$
    for large $k$, on $\nabla f(0.8B)$, i.e. we get an upper bound on
    $\mathrm{Hess}\, g_k$ at all points $x\in B$ for which $\nabla
    g_k(x)\in \nabla f(0.8B)$. But by  Lemma~\ref{lem:image}, $\nabla g_k(0.7B)
    \subset \nabla g(0.8B)$ for large $k$, so we get an upper bound on
    $\mathrm{Hess}\,g_k$ on $0.7B$, which contradicts our assumption.
  \item The Hessian of $h_\infty$ is degenerate somewhere. Then we can apply
    the constant rank theorem of Bian-Guan~\cite[Theorem
    1.1]{BG09}. Indeed, for a fixed value of $\nabla h_\infty$, the
    equation is of the form
    \[ F(A) = \mathrm{Tr}(BA) + c \det(A) - 1 = 0, \]
    for a positive definite matrix $B$ and positive constant $c$. The
    assumptions of Theorem 1.1 in \cite{BG09} are satisfied, since the
    map $A\mapsto F(A^{-1})$ is convex in $A$, according to
    Lemma~\ref{lem:convex}. It follows that if the Hessian of
    $h_\infty$ is degenerate at a point, then it
    must be degenerate everywhere, and so 
    \[ \int_{ \nabla g(0.8B)} \det(h_{\infty,ij}) = 0. \]
    This contradicts the fact that $\nabla g_k(0.7B) \subset \nabla
    g(0.8B)$ for large $k$, and
\[  \int_{ \nabla g_k(0.7B) } \det(h_{k, ij}) = \mathrm{Vol}(0.7B), \]
  but $h_k\to h_\infty$ in $C^{2,\alpha}$. 
  \end{enumerate}
\end{proof}

\begin{lemma}\label{lem:image}
 Suppose that $f_k:B\to\mathbf{R}$ are convex, with $f_{k,ij} >
 \tau\delta_{ij}$, such that they converge uniformly to $f :
 B\to\mathbf{R}$. If $B_1 \subset B_2 \subset B_3\subset B$ are relatively compact
 balls, then for sufficiently large $k$ 
 the gradient maps satisfy
\[ \nabla f_k(B_1) \subset \nabla f(B_2) \subset \nabla f_k(B_3).\] 
\end{lemma}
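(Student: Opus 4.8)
The plan is to use the uniform strong convexity of the $f_k$ together with the uniform convergence $f_k\to f$ to control the location of the minimizers of linear perturbations $y\mapsto f_k(y)-p\cdot y$. First I would record the quadratic lower bound
\[ f_k(y)\ge f_k(x)+\nabla f_k(x)\cdot(y-x)+\tfrac{\tau}{2}|y-x|^2 \]
for all $x,y\in B$, which follows at once from $f_{k,ij}>\tau\delta_{ij}$; passing to the uniform limit, the same bound holds for $f$ (with $\nabla f(x)$ read as a subgradient if $f$ is not $C^1$; in the applications $f$ is smooth). In particular each $f_k$ and $f$ is strictly convex, so the gradient maps are injective, and for any affine perturbation the unique critical point, when it is interior to a ball, is the unique minimizer over the corresponding closed ball. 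Writing $\rho_1=\operatorname{dist}(\overline{B_1},\partial B_2)>0$ and $\rho_2=\operatorname{dist}(\overline{B_2},\partial B_3)>0$, the idea is that these two facts convert the uniform $C^0$-smallness of $f_k-f$ into control on where the relevant minimizers sit.

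For $\nabla f(B_2)\subset\nabla f_k(B_3)$: given $p=\nabla f(x_0)$ with $x_0\in B_2$, the function $\Phi=f-p\cdot y$ is minimized over $\overline{B_3}$ at $x_0$, and the quadratic bound gives $\Phi\ge\Phi(x_0)+\tfrac{\tau}{2}\rho_2^2$ on $\partial B_3$. Since $\Phi_k:=f_k-p\cdot y$ differs from $\Phi$ by exactly $f_k-f$, whose sup norm on $\overline{B_3}$ tends to $0$ with no dependence on $p$, for large $k$ we get $\min_{\partial B_3}\Phi_k>\Phi_k(x_0)$; hence the minimizer $y_k$ of the strictly convex $\Phi_k$ over $\overline{B_3}$ is interior, so $\nabla f_k(y_k)=p$ and $p\in\nabla f_k(B_3)$. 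The inclusion $\nabla f_k(B_1)\subset\nabla f(B_2)$ is proved in the mirror-image way: for $p_k=\nabla f_k(x_k)$ with $x_k\in B_1$, the function $f_k-p_k\cdot y$ is minimized over $\overline{B_2}$ at $x_k$ with a $\tfrac{\tau}{2}\rho_1^2$ gap to its boundary values, while $f-p_k\cdot y$ differs from it by $f-f_k$ (again uniformly in $p_k$), so for large $k$ the minimizer of $f-p_k\cdot y$ over $\overline{B_2}$ is an interior point $x'$, at which $\nabla f(x')=p_k$.

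The step that needs a little care — and is really the crux — is the \emph{uniformity in $p$}: a single $k$ must work simultaneously for every slope $p$ arising as a gradient of $f$ or $f_k$ over the balls in question. This is exactly why the comparison is set up so that $\Phi_k-\Phi=f_k-f$ carries no dependence on $p$, and why the barrier $\tfrac{\tau}{2}\rho_i^2$ produced by the strong convexity is also independent of $p$; given these, $k$ is chosen so that $\|f_k-f\|_{L^\infty(\overline{B_3})}<\tfrac{\tau}{6}\min(\rho_1^2,\rho_2^2)$ and both inclusions follow at once. I do not expect any other genuine difficulty.
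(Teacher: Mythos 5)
Your proof is correct, and it takes a genuinely different route from the paper's. The paper invokes results from Guti\'errez's book (the semicontinuity of gradient images under uniform convergence of convex functions, \cite[Lemma 1.2.2]{Gut01}, and compactness of $\nabla f(K)$, \cite[Lemma 1.1.3]{Gut01}), and then runs a covering argument on $\nabla f(\partial B_2)$, separating it from the relevant gradient images by small balls whose radius is controlled via strict convexity. Your argument instead works directly with the variational characterization of the gradient: $p\in\nabla f_k(B_3)$ iff $f_k - p\cdot y$ has an interior minimizer over $\overline{B_3}$, and the uniform strong convexity provides a boundary barrier $\tfrac{\tau}{2}\rho_i^2$, \emph{independent of} $p$, that survives an $L^\infty$ perturbation of $f_k - f$. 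What your approach buys is a self-contained, quantitative argument with no external citations: the threshold $\|f_k - f\|_{L^\infty(\overline{B_3})} < \tfrac{\tau}{6}\min(\rho_1^2,\rho_2^2)$ is explicit and clearly uniform in the slope. The one point worth phrasing with a bit more care (and you note it parenthetically) is that the limit $f$ need only be strongly convex, not $C^1$; the cleanest way to say it is that $f - \tfrac{\tau}{2}|y|^2$ is a uniform limit of convex functions, hence convex, so the quadratic lower bound holds with $\nabla f(x)$ replaced by any subgradient $p\in\partial f(x)$, and the statement's $\nabla f(B_2)$ is read as $\bigcup_{x\in B_2}\partial f(x)$ throughout. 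With that reading both inclusions go through exactly as you wrote them.
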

\begin{proof}
  From Guti\'errez~\cite[Lemma 1.2.2]{Gut01} we have that 
 \[\limsup_{k\to\infty} \nabla f_k(K)
  \subset \nabla f(K),\]
  for any compact $K\subset B$. 
  Suppose that $B_1 \subset B' \subset B_2$. 
  The strict convexity of $f$ implies that $\nabla f(\partial B_2)$ is
  a positive distance from $\nabla
  f(\overline{B'})$. In particular for each $x\in \nabla f(\partial
  B_2)$, there is a $k_x$, such that $x\not\in \nabla
  f_k(\overline{B'})$ for all $k > k_x$. The strict convexity then
  implies that there is a (fixed) radius $\delta > 0$, such that
  $B_\delta(x)$ is disjoint from $\nabla f_k(\overline{B_1})$ for all
  $k > k_x$. Since $\nabla f(\partial B_2)$ is compact by \cite[Lemma
  1.1.3]{Gut01}, 
  we can find
  some $N$ such that $\nabla f_k(\overline{B_1})$ is disjoint from
  $\nabla f(\partial B_2)$ for all $k >N$. But this implies
\[ \nabla f_k(B_1) \subset \nabla f(B_2) \]
  for $k>N$. 

  For the other inclusion we use that for any compact $K\subset B$ and
  open set $U\supset K$ with $\overline{U}\subset B$, we have
  \[ \nabla f(K) \subset \liminf_{k\to\infty} \nabla f_k(U). \]
  Now choose an intermediate ball $B'$ with $B_2\subset B' \subset
  B_3$. For any $x\in \nabla f(\overline{B}_2)$, we have a $k_x$ such
  that $x\in \nabla f_k(B')$ for all $k > k_x$. By the strict
  convexity we have some $\delta > 0$ such that $B_\delta(x) \in
  \nabla f_k(B_3)$. Using that $\nabla f(\overline{B}_2)$ is compact,
  we can again cover by finitely many such balls, and we get an $N$
  such that $\nabla f(\overline{B}_2) \subset \nabla f_k(B_3)$ for all
  $k > N$.  
\end{proof}

The higher order estimates required by Proposition~\ref{prop:interior} 
follow from standard elliptic theory. We now show the
$C^{2,\alpha}$-estimates for equation~\eqref{eq:Legtransf}. The difficulty
is that the operator is neither concave, nor convex, and the result of
Caffarelli-Yuan~\cite{CY00} also does not apply directly. 

\begin{prop}\label{prop:C2alpha}
  Suppose that $h : B\to\mathbf{R}$ is a smooth convex function on the
  unit ball in $\mathbf{R}^n$ satisfying the equation
  \begin{equation}\label{eq:heqn} \sum_{i,j} a_{ij}(\nabla h)h_{ij} +
    b(\nabla h)
    \det(D^2h) = 1, 
\end{equation}
  where $a_{ij}, b \in C^{1,\alpha}$ and $\lambda < a_{ij} <
  \Lambda$. Then we have $\Vert h\Vert_{C^{2,\alpha}(\frac{1}{2}B)} <
  C$ for a constant $C$ depending on $\lambda, \Lambda$ and
  $C^{1,\alpha}$ bounds for $a_{ij}, b$ and $C^1$ bounds for $h$. 
\end{prop}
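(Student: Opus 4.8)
The plan is to reduce Equation~\eqref{eq:heqn} to a uniformly elliptic equation and then to apply the method of Caffarelli--Yuan~\cite{CY00}, using $(\det D^2 h)^{1/n}$ to compensate for the fact that our operator $M\mapsto \mathrm{Tr}(AM)+b\det M$, with $A=(a_{ij})$, is neither concave nor convex and has degenerating level sets.

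First I would establish the a priori $C^{1,1}$ bound and the uniform ellipticity of the linearized operator. Since $h$ is convex, $D^2h\ge 0$, and since $A\ge \lambda I$ and $b\ge 0$ (as in our setting), Equation~\eqref{eq:heqn} gives $\lambda\,\Delta h\le \mathrm{Tr}(A(\nabla h)D^2h)\le 1$, so $0\le D^2h\le \lambda^{-1}I$ throughout $B$. In particular the entries of $\mathrm{cof}(D^2h)$, being products of $n-1$ eigenvalues of $D^2h$, are uniformly bounded, so the linearized operator
\[ \mathcal L v \;=\; \Phi^{ij}v_{ij} \;+\; \big[(\d_p a_{ij})(\nabla h)\,h_{ij}+(\d_p b)(\nabla h)\det D^2h\big]\,v_p,\qquad \Phi^{ij}=a_{ij}(\nabla h)+b(\nabla h)\,\mathrm{cof}(D^2h)^{ij}, \]
is uniformly elliptic, $\lambda I\le \Phi\le CI$, with bounded measurable coefficients, all constants depending only on the stated data. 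Differentiating \eqref{eq:heqn} once in $x_\gamma$ shows that each $w=\d_\gamma h$ solves $\mathcal L w=0$.

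The key step is to show that $\mathsf G:=(\det D^2 h)^{1/n}$ is a supersolution of $\mathcal L$ up to a bounded inhomogeneity, $\mathcal L\mathsf G\le C$, interpreted in the viscosity sense where $\det D^2h$ vanishes. This is a second-order computation: differentiating \eqref{eq:heqn} twice and using that $M\mapsto (\det M)^{1/n}$ is concave on the positive cone (a classical fact, cf.\ Lemma~\ref{lem:convex} and Section~\ref{sec:convex}), the indefinite quadratic term in $\d^3 h$ coming from the non-concavity of $M\mapsto \det M$ --- precisely what obstructs the usual Evans--Krylov argument --- is, after passing to the $n$-th root, dominated by a non-positive term plus bounded terms. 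Simultaneously, for each unit vector $e$, $h_{ee}$ satisfies $\mathcal Lh_{ee}\ge -C$ modulo the same term, so a combination $h_{ee}+\Lambda_0\mathsf G$, with $\Lambda_0$ large, becomes a subsolution of $\mathcal L$ up to bounded terms.

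With a uniformly elliptic linearized operator with only bounded measurable coefficients, and the sub/super-solution structure just described together with the two-sided bound $0\le \mathsf G\le C$, the argument of Caffarelli--Yuan~\cite{CY00} applies: running the Krylov--Safonov weak Harnack inequality for $\mathcal L$ on dyadic balls yields a geometric decay of $\mathrm{osc}_{B_r}h_{ee}$, hence a H\"older modulus of continuity for $D^2h$ on $\tfrac12 B$, which is the desired bound $\Vert h\Vert_{C^{2,\alpha}(\frac{1}{2}B)}<C$. (Once $D^2 h\in C^\alpha$, the coefficients of $\mathcal L$ are H\"older and Schauder bootstrapping gives all higher derivatives, although only $C^{2,\alpha}$ is claimed.) I expect the main obstacle to be the key step: verifying that $(\det D^2h)^{1/n}$ is a supersolution with a \emph{bounded} inhomogeneity and that it controls the concavity defect with constants independent of any positive lower bound on $\det D^2h$ --- exactly the point where the degeneration of the level sets $\{\det=t\}$ as $t\to 1$, noted in the introduction, must be handled.
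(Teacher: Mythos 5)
Your high-level strategy --- exploit that $\mathsf{G}:=\det(D^2h)^{1/n}$ is a supersolution of the linearized operator to substitute for the Evans--Krylov concavity, then run a Caffarelli--Yuan weak-Harnack iteration --- is exactly what the paper does for the constant-coefficient model $\Delta h + b\det(D^2h)=1$ (Proposition~\ref{prop:C2alpha const}). But your attempt to push this through directly for the variable-coefficient equation \eqref{eq:heqn} has a real gap. The claimed bound $\mathcal{L}\mathsf{G}\leq C$ with a uniform constant does not follow from the second-order computation when $a_{ij}$ and $b$ depend on $\nabla h$. Differentiating \eqref{eq:heqn} twice in a direction $k$ produces, in addition to the fourth-derivative terms and the quadratic-in-$D^3h$ terms that the concavity of $M\mapsto(\det M)^{1/n}$ is meant to cancel, terms such as
\[
(\d_p a_{ij})(\nabla h)\,h_{pkk}\,h_{ij},\qquad
(\d_p b)(\nabla h)\,h_{pkk}\det(D^2h),\qquad
(\d_p a_{ij})(\nabla h)\,h_{pk}\,h_{ijk},
\]
which are \emph{linear} in $D^3h$ and hence a priori unbounded --- a $C^\alpha$ modulus for $D^2h$, and with it control of $D^3h$, is precisely what we are trying to establish. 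Concavity of the $n$-th root says nothing about these; they are absent only when the coefficients are constant, and they do not cancel against the drift term $\beta^p\mathsf{G}_p$ in any evident way. So ``up to a bounded inhomogeneity'' is exactly the step that fails.

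The paper circumvents this with a two-stage argument. First, the constant-coefficient interior estimate of Proposition~\ref{prop:C2alpha const} is upgraded, by the standard rescaling, to a Liouville theorem: any entire convex solution of $\Delta u + b\det(D^2u)=1$ on $\mathbf{R}^n$ is a quadratic polynomial (Lemma~\ref{lem:Liouville}). Second, the variable-coefficient case is handled by a blowup/compactness argument: set $N_h=\sup_x d_x|D^3h(x)|$, suppose $N_h$ is large, rescale around a near-maximizing point, and observe that the derivatives of the coefficients shrink by the factor $N_h^{-1}$, cf.\ \eqref{eq:scaleab}. If $N_{h_k}\to\infty$ along a sequence, the rescaled functions converge to a non-affine entire convex solution of the constant-coefficient equation, contradicting the Liouville theorem. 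That is where the unbounded variable-coefficient terms are killed --- not in a pointwise supersolution inequality. To make your proposal rigorous you would need to insert this Liouville-plus-blowup step (or an equivalent freezing-of-coefficients argument) rather than asserting $\mathcal{L}\mathsf{G}\leq C$ directly.
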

As a first step we prove a priori $C^{2,\alpha}$ estimates for the constant coefficient equation.

\begin{prop}\label{prop:C2alpha const}
  Suppose that $h : B\to\mathbf{R}$ is a smooth convex function on the
  unit ball in $\mathbf{R}^n$ satisfying the equation
  \begin{equation}\label{eq:modelheqn} 
  \Delta h + b\det(D^2h) = 1, 
\end{equation}
  where $b \geq0$ is a non-negative constant. Then we have $\Vert h\Vert_{C^{2,\alpha}(\frac{1}{2}B)} <
  C$ for a constant $C$ depending on $b$ and $|h|_{C^{1,\alpha}(B)}$. 
  \end{prop}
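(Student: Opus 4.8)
The plan is to establish interior $C^{2,\alpha}$ estimates for the constant-coefficient equation $\Delta h + b\det(D^2h) = 1$ by exploiting the convexity structure hidden in Lemma~\ref{lem:convex}. The key observation is the following: write $w = \det(D^2 h)^{1/n}$. On the one hand, the equation gives an a priori bound $\det(D^2 h) \leq (cb)^{-1}$ or rather, since $\Delta h \geq 0$, we have $b\det(D^2 h)\leq 1$, hence $w$ is bounded above; moreover $\Delta h \leq 1$ gives a bound on $\mathrm{Tr}(D^2 h)$, whence by AM--GM a bound on $\det(D^2 h)$ and so on $w$. On the other hand, I claim that $w$ is a \emph{supersolution} of the linearized equation at $h$. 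To see this, apply the operator $L := a^{ij}\partial_i\partial_j$, where $a^{ij} = \delta^{ij} + b\,\mathrm{cof}(D^2h)^{ij}$ is the linearization of the left-hand side, to $w$. Using the concavity of $M\mapsto \det(M)^{1/n}$ on positive definite matrices (equivalently, differentiating the equation twice in a fixed direction $e$ and combining) one gets that $L(\det(D^2h)^{1/n})\leq 0$ in the viscosity/distributional sense. The point is that differentiating the equation once gives $L(\partial_e h) = 0$ for every $e$, so each $\partial_e h$ is $L$-harmonic; then the concavity of $\det^{1/n}$ as a function of the Hessian, together with the structural inequality \eqref{eq:Fconvex} from Lemma~\ref{lem:convex} applied to $F(A)=S_1(A^{-1})$ pulled back appropriately, forces $w$ to be $L$-superharmonic.

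Granting this, the strategy of Caffarelli--Yuan~\cite{CY00} applies. Their method handles equations $F(D^2 h)=1$ where the level set $\{F = 1\}$ need not be convex, provided one can exhibit a function that is a supersolution of the linearized operator and controls the geometry of the Hessian; here the role of that function is played by $w = \det(D^2h)^{1/n}$. Concretely, the uniform bound $\lambda I \leq D^2 h$ (which follows from $\Delta h \leq 1$ forcing no eigenvalue can be too large, combined with $b\det(D^2h)\leq 1$; more carefully, the lower bound on the Hessian comes from the equation itself as in the proof of the preceding proposition, where $g_{k,ij}>f_{k,ij}>\tau\delta_{ij}$) together with the supersolution property of $w$ lets one run the perturbation argument of \cite{CY00}: one shows that on small balls the solution is close in $C^2$ to a solution of a constant-coefficient linear equation, obtaining a Campanato-type decay of $\fint_{B_r}|D^2 h - (D^2h)_{B_r}|^2$, which yields $C^{2,\alpha}$.

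First I would record the a priori bounds: $0 \leq \Delta h \leq 1$ and $0\leq b\det(D^2h)\leq 1$, hence $D^2 h$ has bounded trace and bounded (from above) determinant; combined with the structural lower bound $D^2 h \geq \tau I$ coming from the equation, one gets $\tau I \leq D^2 h \leq C I$ pointwise, so the operator $L$ is uniformly elliptic with ellipticity constants depending only on $b$ and the bounds on $h$. Second, I would differentiate \eqref{eq:modelheqn} once to get $L(\partial_e h)=0$, and differentiate twice (or use directly the concavity of $M\mapsto \det M^{1/n}$) to derive $L w \leq 0$; this is the step where Lemma~\ref{lem:convex} (via the reformulation $A\mapsto S_1(A^{-1})$) supplies the needed algebraic inequality and is the technical heart. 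Third, with the uniform ellipticity and the supersolution $w$ in hand, I would invoke the argument of Caffarelli--Yuan~\cite{CY00} — which I would reproduce in the needed form — to conclude interior $C^{2,\alpha}$ bounds depending only on $b$ and $|h|_{C^{1,\alpha}(B)}$ (in fact $|h|_{C^1}$ suffices once the Hessian bounds are established).

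The main obstacle is the second step: verifying rigorously that $\det(D^2h)^{1/n}$ is a supersolution of the linearized operator $L$ and, more importantly, that the resulting supersolution interacts correctly with the Caffarelli--Yuan machinery — in particular that it controls the non-convexity of the level set $\{\mathrm{Tr}(M)+b\det(M)=1\}$, whose failure of uniform convexity as $\det(M)\to 1/b$ (equivalently as $\mathrm{Tr}(M)\to 0$, which is excluded here by $D^2h\geq\tau I$) is precisely what obstructs a direct application of their theorem. Making the constants uniform and independent of higher regularity of $h$ will require care, but the ellipticity bounds from the first step keep everything in the regime where \cite{CY00} operates.
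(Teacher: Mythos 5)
Your overall strategy is the same as the paper's: show that $w=\det(D^2h)^{1/n}$ is a supersolution of the linearized operator $L=(\delta^{ij}+b\det(D^2h)h^{ij})\partial_i\partial_j$ and then run the Caffarelli--Yuan scheme, using $w$ in Case~2 of \cite[Proposition 1]{CY00} in place of their subsolution $e^{K\Delta u}$. However, your justification of $Lw\leq 0$ has a genuine gap. You argue that it ``follows from concavity of $M\mapsto\det(M)^{1/n}$'' together with $L(\partial_e h)=0$, but that is not a valid deduction here, because the nonlinearity $F(M)=\operatorname{Tr}(M)+b\det(M)$ is itself \emph{neither concave nor convex} for $n\geq 2$. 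Differentiating the equation twice gives $L(h_{kk})=-F^{ij,pq}h_{ijk}h_{pqk}$, which has no definite sign; so the second derivatives of $h$ are not $L$-sub- or $L$-supersolutions, and there is no general principle that turns concavity of $G=\det^{1/n}$ into $L(G(D^2h))\leq 0$. Concavity of $G$ controls only the term $a^{kl}G^{ij,rs}h_{ijk}h_{rsl}\leq 0$ in the expansion of $L(G(D^2h))$; the remaining term $G^{ij}L(h_{ij})$ is precisely the unsigned quantity above. The inequality $Lw\leq 0$ does hold, but because of a cancellation specific to the pair $(\operatorname{Tr}+b\det,\ \det^{1/n})$, and the paper establishes it by an explicit computation (in diagonalizing coordinates, the $bS_n$ terms contribute a nonpositive multiple of $(\sum_p h^{pp}h_{ppk})^2$ and the rest closes with the Cauchy--Schwarz inequality $(\sum_p h^{pp}h_{ppk})^2\leq n\sum_{p,q}h^{pp}h^{qq}h_{pqk}^2$). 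Invoking Lemma~\ref{lem:convex} and \eqref{eq:Fconvex} does not fill this; those give convexity of $A\mapsto S_k(A^{-1})$, i.e.\ convexity of the equation in the Legendre-dual variable $D^2g=(D^2h)^{-1}$, not supersolution information for $L$ acting on functions of $D^2h$, and you would need to produce that bridge explicitly.

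Separately, your claimed lower bound $D^2h\geq\tau I$ is false for the constant-coefficient equation: $h(x)=x_2^2/2$ on $\mathbf{R}^2$ satisfies $\Delta h+b\det(D^2h)=1$ for every $b\geq 0$ (since $\det(D^2h)\equiv 0$) with $h_{11}\equiv 0$. You are importing the bound from the preceding proposition, where the different equation $f_{ij}g^{ij}+d\det(D^2f)/\det(D^2g)=c$ does force $g_{ij}>f_{ij}$; that mechanism is absent here. Fortunately the bound is not needed: $\delta^{ij}+b\det(D^2h)h^{ij}$ is already uniformly elliptic with constants depending only on $b$ and $\sup\Delta h\leq 1$, since its extra eigenvalues $b\prod_{j\neq i}\lambda_j$ are nonnegative and bounded above by AM--GM. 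Your phrasing also suggests you are reading \cite{CY00} as a perturbation/Campanato-decay argument requiring uniform convexity of the level set, whereas the paper only needs their measure dichotomy and the weak Harnack inequality applied to $(1-\Delta w_k)^{1/n}-(1-s_k)^{1/n}$, which is where the supersolution property of $w$ enters.
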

 \begin{proof}
We will assume $b>0$, since $b=0$ is standard. Let $f = \det(D^2h)^{1/n}$, and denote
the linearized operator by $L$, which acts on a smooth function
$g$ by
\[ L g= \Delta g + b \det(D^2h) h^{ij} g_{ij}, \]
where we use summation convention for repeated indices.  We now
compute $Lf$. We work at a point where $D^2h$ is diagonal. We also
write $S_n = \det(D^2 h)$ to simplify notation. 
 First, differentiating the equation we have for each $k$
\[ \begin{gathered}
          h_{iik} + bS_n h^{ij}h_{ijk} = 0, \\
          h_{iikk} (1 + bS_nh^{ii})+ bS_n h^{pp}h_{ppk}
          h^{qq}h_{qqk} - bS_n h^{pp}h^{qq} h_{pqk}^2 = 0.
\end{gathered} \]
Also, differentiating $f$, we have
\[ \begin{gathered}
 f_k = \frac{1}{n} S_n^{1/n} h^{ij}h_{ijk} \\
 f_{kk} = \frac{1}{n} S_n^{1/n} h^{ii}h_{iikk} + \frac{1}{n^2} S_n^{1/n} h^{pp}h_{ppk} h^{qq}h_{qqk}
             - \frac{1}{n}S_n^{1/n} h^{pp}h^{qq} h_{pqk}^2. 
\end{gathered}\]
We now compute $Lf$:
\[ 
\begin{aligned}
 Lf &= (1 + bS_nh^{kk}) f_{kk} \\
&= \frac{S_n^{1/n}}{n^2}(1 + bS_nh^{kk})\Big[nh^{ii}h_{iikk} +
h^{pp}h^{qq}(h_{ppk}h_{qqk} - nh_{pqk}^2)\Big] \\
&= \frac{S_n^{1/n}}{n}\Big[ h^{ii}bS_nh^{pp}h^{qq}(h_{pqi}^2 -
h_{ppi}h_{qqi})\Big] \\
&\quad + \frac{S_n^{1/n}}{n^2}(1 + bS_nh^{ii})
h^{pp}h^{qq}(h_{ppi}h_{qqi} - nh_{pqi}^2) \\
&= \frac{S_n^{1/n}}{n^2}h^{ii}h^{pp}h^{qq}h_{ppi}h_{qqi}(-nbS_n
+ bS_n) \\
&\quad + \frac{S_n^{1/n}}{n^2} \sum_{i,p,q}
h^{pp}h^{qq}h_{ppi}h_{qqi}- \frac{S_n^{1/n}}{n} \sum_{i,p,q} 
h^{pp}h^{qq}h_{pqi}^2. 
\end{aligned}\]
We have
\[ \begin{aligned}
\sum_{p,q} h^{pp}h^{qq}h_{ppi}h_{qqi} = \left(\sum_p
  h^{pp}h_{ppi}\right)^2 \leq n\sum_p (h^{pp}h_{ppi})^2 
&\leq n\sum_{p,q} h^{pp}h^{qq}h_{pqi}^2. 
\end{aligned}
\]
It follows from this that $Lf \leq 0$. 

At this point we can follow the argument in
Caffarelli-Yuan~\cite{CY00} closely.  The only difference in the
argument is that in \cite{CY00} the function $e^{K\Delta u}$ is a
subsolution of the Linearized equation for a sufficiently large constant
$K$. This does not appear to be the case for our equation, but instead
we can use that  $\det(D^2u)^{1/n}$ is a supersolution according to
our calculation above. In \cite{CY00} this supersolution property is
only used in dealing with ``Case 2'' in the proof of their Proposition
1. We will see that the same argument works in our situation as well.

As in \cite{CY00}, we fix $\rho, \xi, \delta, k_{0}> 0$ to  
be determined and we set  
\[s_{k} := \sup_{x \in B_{1/2^{k}}} \Delta u(x), \qquad 1 \leq k \leq k_{0}. 
\]
From the equation we know that $s_{k} \leq 1$. Define
\[
E_{k} := \left\{x\in B_{1/2^{k}} |\, \Delta u(x) \leq s_k-\xi \right\},
\]
and as in Case 2 in \cite{CY00}, assume that for all $1\leq k\leq k_0$
we have $|E_k| > \delta |B_{1/2^k}|$. 
Let us define
\[w_{k}(x) = 2^{2k} u\left(\frac{x}{2^{k}}\right).
\]
We apply the above computation to conclude that
\[
L\left[ (1-\Delta w_k)^{1/n} - (1-s_k)^{1/n}\right] \leq 0
\]
Since $ (1-\Delta w_k)^{1/n} - (1-s_k)^{1/n} \geq 0$ on $B_1$,
we can apply the weak Harnack inequality 
to obtain
\[
(1-s_{k+1})^{1/n} = \inf_{B_{1/2}}(1-\Delta w_k)^{1/n}  \geq (1-s_{k})^{1/n}+ c \|
(1-\Delta w_k)^{1/n} - (1-s_k)^{1/n} \|_{L^{p_0}(B_{1})},
\]
on $B_{1/2}$, for uniform constants $c, p_0 > 0$. 
The right hand side can be estimated using the assumption on the
measure $|E_{k}|$, and we get
\[
\inf_{B_{1/2}} (1-\Delta w_k)^{1/n} \geq (1-s_k)^{1/n} + c
\delta^{1/p_{0}} \frac{\xi}{n},
\]
since we have
\[ (1-s_k + \xi)^{1/n} - (1-s_k)^{1/n} \geq \frac{\xi}{n}. \]
Since $1-s_k \leq 1$, there can only be at most a bounded number $k_0$
of such steps. The remainder of the argument is identical to
Cafarelli-Yuan \cite{CY00}. 
\end{proof}

We can now prove the interior $C^{2,\alpha}$ estimates for the general equation in
Proposition~\ref{prop:C2alpha} by using a blow-up argument.  We begin by proving
a Liouville rigidity theorem for convex solutions of our equation;

\begin{lemma}\label{lem:Liouville}
Suppose that $u: \mathbf{R}^{n} \rightarrow \mathbf{R}$ is a smooth, convex function
satisfying
\begin{equation}\label{eq: twist eq on Rn}
\Delta u + b\det(D^2u) =1,
\end{equation}
then $u$ is a quadratic polynomial.
\end{lemma}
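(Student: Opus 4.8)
The plan is to combine the scaling invariance of the equation with the interior estimate of Proposition~\ref{prop:C2alpha const}, in the style of the classical Bernstein--Liouville arguments. The first observation is an a priori two-sided Hessian bound valid on all of $\mathbf{R}^n$: convexity gives $D^2u\geq 0$ and hence $\det(D^2u)\geq 0$, so from the equation and $b\geq 0$ we get $\Delta u\leq 1$; since the eigenvalues of $D^2u$ are non-negative and sum to at most $1$, each one is at most $1$, so $0\leq D^2u\leq I$ everywhere. In particular $\|D^2u\|_{L^\infty(\mathbf{R}^n)}$ is bounded by a dimensional constant.

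Next I would blow down. For each $R>0$ set $v_R(x)=R^{-2}u(Rx)$, so that $D^2v_R(x)=(D^2u)(Rx)$. Then $v_R$ is convex, satisfies the same equation $\Delta v_R+b\det(D^2v_R)=1$ on $\mathbf{R}^n$, and $0\leq D^2v_R\leq I$. Since the equation involves only $D^2v_R$, I may subtract the affine part of $v_R$ at the origin and assume $v_R(0)=0$ and $\nabla v_R(0)=0$; together with the Hessian bound this gives $\|v_R\|_{C^{1,\alpha}(B_1)}\leq C_0$ with $C_0$ a dimensional constant independent of $R$. Proposition~\ref{prop:C2alpha const} then yields $\|v_R\|_{C^{2,\alpha}(B_{1/2})}\leq C$, with $C$ depending only on $b$ and $C_0$, hence independent of $R$; in particular $[D^2v_R]_{C^\alpha(B_{1/2})}\leq C$ uniformly in $R$.

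Finally I would undo the scaling. From $D^2v_R(x)=(D^2u)(Rx)$ one gets $[D^2v_R]_{C^\alpha(B_{1/2})}=R^\alpha\,[D^2u]_{C^\alpha(B_{R/2})}$, and therefore $[D^2u]_{C^\alpha(B_{R/2})}\leq CR^{-\alpha}$. Letting $R\to\infty$ forces $D^2u$ to have vanishing $C^\alpha$ seminorm on every ball, so $D^2u$ is constant on $\mathbf{R}^n$; subtracting the corresponding quadratic form leaves an affine function, so $u$ is a quadratic polynomial.

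The one point requiring care is the uniformity of the constant in Proposition~\ref{prop:C2alpha const}: it is essential that this constant does not degenerate as $D^2h$ becomes flat, i.e.\ that it has no dependence on a positive lower bound for $D^2h$. This is precisely the content of that proposition, whose proof exploits that $\det(D^2h)^{1/n}$ is a supersolution of the linearized operator rather than any uniform convexity of the level sets. Granting this, the remaining steps are routine. (When $b=0$ the equation reads $\Delta u=1$, and one can argue directly that each $u_{ij}$ is a bounded harmonic function on $\mathbf{R}^n$, hence constant; but the argument above handles all $b\geq 0$ uniformly.)
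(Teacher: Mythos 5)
Your argument is correct and is essentially the same as the paper's: both establish the global Hessian bound from the equation and convexity, blow down via $v_R(x)=R^{-2}u(Rx)$, invoke the constant-coefficient $C^{2,\alpha}$ estimate of Proposition~\ref{prop:C2alpha const} uniformly in $R$, and then let $R\to\infty$ to kill the $C^\alpha$ seminorm of $D^2u$. The paper subtracts the affine part of $u$ once at the start rather than of each $v_R$, but this is an immaterial difference.
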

\begin{proof}
Th lemma follows from a simple
rescaling argument, which is essentially the same as the proof of
Guti\'errez~\cite[Theorem 4.3.1]{Gut01}.
  By subtracting a plane we may assume that $u(0)= 0$ and
$\nabla u(0)=0$.  Since $u$ is convex, the equation implies that $|D^2u| \leq \sqrt{n}$.
By integration we obtain the bound $|\nabla u|(x) \leq \sqrt{n} |x|$.  We consider the
 function $v_{R}(x) := R^{-2}u(Rx)$.  By the above, for $x \in B_{1}$ there holds
 \[ v_{R}(0)=0, \qquad |\nabla v_{R}|(x) \leq \sqrt{n}, \qquad D^{2}v_{R}(x) = D^{2}u(Rx), \]
and so $v_{R}(x)$ is uniformly bounded in $C^2(B_{1})$.
 Moreover, $v_{R}(x)$ solves equation~(\ref{eq: twist eq on Rn}) on $B_{1}$, and so  
 by the interior estimates in Proposition~\ref{prop:C2alpha const}
 we have a uniform bound for $|D^2v_{R}|_{C^{\alpha}(B_{1/2})}$.  Writing this in terms of $u$, we have
 \[
 R^{\alpha} |D^2u|_{C^{\alpha}(B_{2^{-1}R})} \leq C
 \]
 where $C$ is independent of $R$.  Taking the limit as $R\rightarrow \infty$ we see that
 we must have $D^2u = D^2u(0)$ a constant.  Hence $u$ is a quadratic polynomial.
 \end{proof}

\begin{proof}[Proof of Proposition~\ref{prop:C2alpha}]

To deal with the case of varying coefficients, we use a blowup
argument to reduce to the Liouville result in the constant coefficient
case.

Suppose then that $h$  satisfies equation~\eqref{eq:heqn} on $B$. Let
\[ N_h = \sup_{x\in B} d_x|D^3h(x)|, \]
where $d_x = d(x, \d B)$ is the distance to the boundary of $B$. Our
goal is to bound $N_h$ from above, so we can assume $N_h > 1$ say. 
Let us assume that the supremum is achieved at a point $x=x_0\in B$.  We define the function 
\[ \widetilde{h}(z) = d_{x_{0}}^{-2}N_h^2 h(x_0 + d_{x_{0}}N_h^{-1} z) - A - A_iz_i,\]
where $A, A_i$ are constants chosen so that 
\begin{equation}\label{eq: blow-up normal}
\widetilde{h}(0) =0 ,\qquad
\nabla\widetilde{h}(0) = 0.
\end{equation}
The function $\widetilde{h}(z)$ is
defined on the ball $B_{N_h}(0)$ around the origin. By direct computation we have
\[
D^{2}\widetilde{h}(z) = D^{2}h(x_{0} + d_{x_{0}}N_{h}^{-1}z), \qquad D^{3}\widetilde{h}(z) = d_{x_{0}}N_{h}^{-1}D^3h(x_{0} + d_{x_{0}}N_{h}^{-1}z)
\]
and so $|D^3\widetilde{h}(z)| \leq 2$ on $B_{2^{-1}N_{h}}(0)$.
Moreover, since $|D^2\tilde h| = |D^{2}h| <C$, the normalization~(\ref{eq: blow-up normal})
implies that we have a bound
\[ \Vert \widetilde{h} \Vert_{C^3(B_{2^{-1} N_h })} < C \]
for a uniform constant $C$.   
In addition $\widetilde{h}$ satisfies an equation of the form
\begin{equation}\label{eq:htildeeq}
 \sum_{i,j} \widetilde{a}_{ij}(\nabla \widetilde{h})
\widetilde{h}_{ij} + \widetilde{b}(\nabla\widetilde{h}) \det(D^2
\widetilde{h}) = 1, 
\end{equation}
for coefficients $\widetilde{a}_{ij}, \widetilde{b}$ satisfying the
same bounds as $a_{ij}, b$, but 
\begin{equation}\label{eq:scaleab}
\begin{aligned} \sup |\nabla \widetilde{a}_{ij}| &\leq d_{x_{0}}N_h^{-1}\sup
  |\nabla a_{ij}|, \\
\sup |\nabla \widetilde{b}| &\leq d_{x_{0}}N_h^{-1} \sup |\nabla b|.
\end{aligned}
\end{equation}
Differentiating equation~\eqref{eq:htildeeq}, we obtain uniform
$C^{2,\alpha}$ bounds on $\nabla\widetilde{h}$ on compact subsets of
$B_{N_h / 4}$.

For the sake of obtaining a contradiction we suppose 
that we have a sequence of convex functions $h_k$ on $B$ satisfying
\eqref{eq:heqn}, such that the corresponding constants
$N_{h_k} >4k$. Then the rescaled functions $\widetilde{h}_k$ are
 defined on $B_{4k}(0)$ and have uniform
$C^{3,\alpha}$ bounds on $B_{k}(0)$, and satisfy
$|D^3\widetilde{h}_k(0)|=1$.  By taking a diagonal subsequence, we can extract
a convex limit $\widetilde{h}_\infty : \mathbf{R}^n
\to\mathbf{R}$ in $C^{3,\alpha/2}$, satisfying
$|D^3\widetilde{h}_\infty(0)| = 1$, and equation~\eqref{eq:htildeeq}
with constant coefficients because of \eqref{eq:scaleab}.  Since $\widetilde{h}_{\infty}$ is convex,
 $C^{3,\alpha/2}$ on $\mathbf{R}^{n}$ and satisfies the constant
 coefficient equation 
  \eqref{eq: twist eq on Rn} (after a linear change of coordinates),
  we easily obtain that  
 $\widetilde{h}$ is in fact smooth.  In particular, we can apply the
 Liouville rigidity result in Lemma~\ref{lem:Liouville} to 
 conclude that $\widetilde{h}_{\infty}$ is a quadratic polynomial.
 But this contradicts $|D^3\widetilde{h}_\infty(0)| = 1$. 
\end{proof}

\subsection*{Acknowledgements}
The authors would like to thank Connor Mooney and Ovidiu Savin for
helpful suggestions.  The authors are also grateful to D.H. Phong and S.-T. Yau for
their encouragement and support. The second named author is supported by NSF
grants DMS-1306298 and DMS-1350696.


\end{document}